\documentclass{elsarticle}

\usepackage{amsmath,amsthm,amssymb}
\usepackage{epsfig}
\usepackage{amssymb}
\usepackage{amsmath}
\usepackage{amssymb}
\usepackage{amsmath,amsthm}
\usepackage[latin1]{inputenc}
\usepackage[T1]{fontenc}
\usepackage{path}
\usepackage{ae,aecompl}
\usepackage{amsfonts}
\usepackage{amsxtra}
\usepackage{bbm,euscript,mathrsfs}

\usepackage{enumitem}
\setenumerate{label={\rm (\alph{*})}}

\usepackage[frak,hyperref]{paper_diening}

% %\renewcommand{\theequation}{\mbox{\arabic{section}.\arabic{equation}}}
% %\setcounter{page}{1}

%%%%%%%%%%%%%%%%%%%%%%%%%%%%%%%%%%%%%%%%%%%%%%%%%%%%%%%%%%%
\numberwithin{equation}{section}

\hyphenation{Lip-schitz}

%%%%%%%%%%%%%%%%%%%%%%%%%%%%%%%%%%%%%%%%%%%%%%%%%%%%%%%%%%%%%ulrike 

\newcommand{\dy}{\,\mathrm{d}y}
\newcommand{\dx}{\,\mathrm{d}x}
\newcommand{\dt}{\,\mathrm{d}t}
\newcommand{\dxt}{\,\mathrm{d}x\,\mathrm{d}t}
\newcommand{\ds}{\,\mathrm{d}\sigma}

\newcommand{\R}{\mathbb{R}}
\renewcommand{\div}{\operatorname{div}}

\newcommand{\N}{{\mathbb{N}}}

\newcommand{\ep}{\bfvarepsilon}

\newcommand{\bfell}{\boldsymbol{\ell}}

\providecommand{\Qz}{\ensuremath{Q_0}}
\providecommand{\Bz}{\ensuremath{B_0}}
\providecommand{\Iz}{\ensuremath{I_0}}

\providecommand{\Rdr}{{\setR^3}}

\providecommand{\Ma}{\ensuremath{\mathcal{M}}}
\providecommand{\Mas}{\ensuremath{\mathcal{M}_\sigma}}

\providecommand{\Oal}{\ensuremath{\mathcal{O}_\lambda}}

\providecommand{\zal}{\ensuremath{\bfz^\alpha_\lambda}}

\def\IntBRx0{\int_{B_R(x_0)}}

\def\IntB2R{\int_{B_{2R}}}
\def\IntBtRx0{\int_{B_{tR}(x_0)}}

%\newcommand{\loc}{\operatorname{loc}}

%***************************************************

\def\Xint#1{\mathchoice%
    {\XXint\displaystyle\textstyle{#1}}%
    {\XXint\textstyle\scriptstyle{#1}}%
    {\XXint\scriptstyle\scriptscriptstyle{#1}}%
    {\XXint\scriptscriptstyle\scriptscriptstyle{#1}}%
    \!\int}
\def\XXint#1#2#3{{\setbox0=\hbox{$#1{#2#3}{\int}$}%
    \vcenter{\hbox{$#2#3$}}\kern-.6\wd0}}
\def\fint{\Xint-}

\def\BBint#1#2#3{{\setbox0=\hbox{$#1{#2#3}{\int}$}%
    \vcenter{\hbox{$#2#3$}}\kern-.8\wd0}}

\def\CCint#1#2#3{{\setbox0=\hbox{$#1{#2#3}{\int}$}%
    \vcenter{\hbox{$#2#3$}}\kern-.9\wd0}}

\def\FFint#1#2#3{{\setbox0=\hbox{$#1{#2#3}{\int}$}%ulrike 
    \vcenter{\hbox{$#2#3$}}\kern-1.0\wd0}}

\def\WWint#1#2#3{{\setbox0=\hbox{$#1{#2#3}{\int}$}%
    \vcenter{\hbox{$#2#3$}}\kern-1.1\wd0}}

\def\Gfffffint#1#2#3{{\setbox0=\hbox{$#1{#2#3}{\int}$}%
    \vcenter{\hbox{$#2#3$}}\kern-1.3\wd0}}

%Mittelwertintegrale
\newcommand{\newmathop}[2]{\newcommand{#1}{\mathop{\mit{#2}}}}
\newmathop{\Minttext}{\int\limits\hspace{ -0.92em}-}
\newmathop{\Mint}{\int\hspace{ -1.075em}-}
%********************************************************************************

\allowdisplaybreaks

%%%%%%%%%%%%%%%%%%%%%%%%%%%%%%%%%%%%%%%%%%%%%%%%%%%%%%%%%%%%%%%%%%%%%%%%%%%%%%%%
%%%
%%% Definition des Mittelwertintegrals:
%%%

\def\BBint#1#2#3{{\setbox0=\hbox{$#1{#2#3}{\int}$}%
    \vcenter{\hbox{$#2#3$}}\kern-.8\wd0}}

\def\CCint#1#2#3{{\setbox0=\hbox{$#1{#2#3}{\int}$}%
    \vcenter{\hbox{$#2#3$}}\kern-.9\wd0}}

\def\FFint#1#2#3{{\setbox0=\hbox{$#1{#2#3}{\int}$}%ulrike 
    \vcenter{\hbox{$#2#3$}}\kern-1.0\wd0}}

\def\WWint#1#2#3{{\setbox0=\hbox{$#1{#2#3}{\int}$}%
    \vcenter{\hbox{$#2#3$}}\kern-1.1\wd0}}

\def\Gfffffint#1#2#3{{\setbox0=\hbox{$#1{#2#3}{\int}$}%
    \vcenter{\hbox{$#2#3$}}\kern-1.3\wd0}}

\DeclareMathOperator{\Div}{div}

\DeclareMathOperator{\LD}{LD}

\newcommand{\s}{\mathbb S}

\newcounter{formel}

\newtheorem{defs}{Definition}[section]
\newtheorem{them}[defs]{THEOREM}

\newtheorem{lems}[defs]{Lemma}
\newtheorem{lemma}[defs]{Lemma}

\newtheorem{corollary}[defs]{Corollary}

\newtheorem{remark}[defs]{Remark}
\newtheorem{props}[defs]{Proposition}

\newcommand{\Bog}{\ensuremath{\text{\rm Bog}}}

\providecommand{\dtau}{\,\mathrm{d}\tau}

%%%%%%%%%%%%%%%%%%%%%%%%%%%%%%%%%%%%%%%%%%%%%%%%%%%%%%%%%%%%%%%%%%%%%%%%%%%%%%%%
%%%
%%% define the titlepage
%%%
%%%%%%%%%%%%%%%%%%%%%%%%%%%%%%%

%\setlength{\parindent}{0pt} %Setzt den Erstzeileneinzug auf Null.
%\setlength{\parskip}{\baselineskip} %Trennt Abs{\"a}tze durch eine zus{\"a}tzliche Leerzeile.
%\setlength{\oddsidemargin}{30mm}

\setlength{\textwidth}{155mm}
\setlength{\oddsidemargin}{5mm}

%\oddsidemargin (linker Rand allgemein bzw. f{\"u}r ungerade Seiten bei twoside und book)
%\evensidemargin (linker Rand f{\"u}r gerade Seiten (wirksam nur bei twoside und book))

\begin{document}

\begin{frontmatter}

  %%%%%%%%%%%%%%%%%%%%%%%%%%%%%%%%%%%%%%%%%%%%%%%%%%%%%%%%%%%%%%%%%%%%%%%%%%%%%%%% 
  %%% 
  %%% begin document
  %%% 
  \title{The $\mathcal A$-Stokes approximation for non-stationary problems}

  \author{Dominic Breit}
  \ead{d.breit@hw.ac.uk}

  \address{School of Mathematical \& Computer Sciences, Heriot-Watt University,
Riccarton Edinburgh, EH14 4AS UK}

  \begin{abstract}
Let $\mathcal A$ be an elliptic tensor.  A function $\bfv\in L^1(I;\LD_{\Div}(B))$ is a solution to the non-stationary $\mathcal A $-Stokes problem iff
\begin{align}\label{abs}
\int_Q \bfv\cdot\partial_t\bfphi\dxt-\int_Q \mathcal A(\ep(\bfv),\ep(\bfphi))\dx=0\quad\forall\bfphi\in C^{\infty}_{0,\Div}(Q),
\end{align}
where $Q:=I\times B$, $B\subset\R^d$ bounded. If the l.h.s. of \eqref{abs} is not zero but small we talk about almost solutions. We present an approximation result in the fashion of the $\mathcal A$-caloric approximation for the non-stationary $\mathcal A $-Stokes problem. Precisely, we show
that every almost solution $\bfv\in L^p(I;W^{1,p}_{\Div}(B))$, $1<p<\infty$, to (\ref{abs}) can be approximated by a solution to (\ref{abs}) in the $L^s(I;W^{1,s}(B))$-sense for all $s<p$. So, we extend the stationary $\mathcal A$-Stokes approximation from \cite{BrDF} to parabolic problems. 
  \end{abstract}

  \begin{keyword}
    solenoidal Lipschitz truncation \sep
    divergence free truncation \sep Navier-Stokes \sep Non-
    Newtonian fluids \sep A-harmonic
    approximation \sep A-caloric
    approximation\sep A-Stokes approximation\sep \MSC 35K40
    \sep 35K51 \sep 35Q35 \sep 76D03
  \end{keyword}
\end{frontmatter}

\section{Introduction}
\label{sec:intro}
A very crucial tool in the analysis of nonlinear partial differential equations is a comparison with solutions to linear problems. This powerful idea firstly appears in the work of De Giorgi \cite{DeG61} in the context of minimal surfaces. Roughly speaking we consider the nonlinear problem locally as a perturbation of a linear partial differential equation and try to transfer qualitative properties from the linear theory. Solutions to linear problems typically are smooth so we are hoping to approximate solutions to nonlinear problems by solutions to linear ones in an appropriate way in  order to establish regularity properties (of the solutions to nonlinear equations). \\
Let $\mathcal A:\R^{d\times D}\rightarrow\R^{d\times D}$ be an elliptic tensor, i.e.
\begin{align}\label{eq:ell}
\mathcal A(\bfxi,\bfxi):=\mathcal A\bfxi:\bfxi\geq\lambda|\bfxi|^2\quad\forall \bfxi\in\R^{d\times D}
\end{align}
with $\lambda>0$ and $B\subset \R^d$ open and bounded with Lipschitz boundary (for instance a ball).
We call a function $\bfu\in W^{1,1}(\Omega)$ with
\begin{align}
\int_B \mathcal A(\nabla\bfu,\nabla\bfphi)\dx=0\quad\forall\bfphi\in C^{\infty}_0(B)
\end{align}
$\mathcal A$-harmonic on $B$. On the other, if we have for some $\delta\ll1$
\begin{align}
\Big|\int_B \mathcal A(\nabla\bfu,\nabla\bfphi)\dx\Big|\leq \delta\quad \forall\bfphi\in C^{\infty}_0(B),
\end{align}
i.e. the integral $\int_B \mathcal A(\nabla\bfu,\nabla\bfphi)\dx$ is small (compared to some norms of $\bfu$ and $\bfphi$) we call $\bfu$ almost $\mathcal A$-harmonic. De Giorgi's observation in \cite{DeG61} was the fact that almost harmonic functions can be approximated by harmonic ones. Precisely, if $\bfu\in W^{1,2}(B)$ is almost harmonic it can be approximated with respect to the $L^2(\Omega)$-norm. Since this usually does not suffice to show regularity one needs in addition a Caccioppoli inequality. It bounds the $L^2(\Omega)$-norm of the gradient in terms of the $L^2(B)$-norm of the function itself. Both together - harmonic approximation and Caccioppoli-inequality -  finally yield local $C^{1,\alpha}$-estimates.\\
Since the pioneering work of De Giorgi a lot of improvement and generalizations have been done (see \cite{DuzMin09h} for an overview). For instance the $p$-harmonic approximation was introduced in \cite{DM1} and gives a nonlinear variant considering the $p$-Laplace equation
\begin{align*}
\int_B \mathcal |\nabla\bfu|^{p-2}\nabla\bfu:\nabla\bfphi\dx=0\quad\forall\bfphi\in C^{\infty}_0(\Omega).
\end{align*}
Here almost solutions in $W^{1,p}(B)$ can be approximated by solutions in the $L^p(B)$-sense. This technique has been improved in \cite{DSV} where an approximation in $W^{1,s}(B)$ for all $s<p$ is possible. Moreover, it applies also to the more general setting of Orlicz spaces. A crucial tool for the approximation result in \cite{DSV} is the Lipschitz truncation method (originally developed in \cite{AF}; it allows to approximate a Sobolev function by a Lipschitz continuous function in a way that they are equal on a large set whose size can be controlled). Based on the $\mathcal A$-harmonic approximation and its generalizations a lot of (partial-) regularity results for nonlinear PDEs have been shown. A nice overview about regularity and irregularity for elliptic problems is given in \cite{Mi}.\\

Let us turn to fluid mechanics. The $p$-Stokes problem - describing the slow stationary flow of a Non-Newtonian fluid \cite{AM},\cite{BAH} - reads as follows: for a given volume force $\bff:\Omega\rightarrow\R^d$ find $(\bfv,\pi)$ such that
\begin{align}
  \left\{\begin{array}{rlc}
      \Div\bfS(\ep(\bfv)) &=\nabla \pi-\bff& \mbox{in $B$,}\\
      \Div \bfv&=0\qquad& \mbox{in $B$,}\\
      \bfv&=\bfv_0\qquad\quad& \mbox{ \,on
        $\partial B$.}\end{array}\right.\label{1.1}
\end{align}
Here the nonlinear tensor $\bfS$ satisfies the $p$-growth condition
$$\lambda(1+|\ep|^2)^{\frac{p-2}{2}}|\bfxi|^2\leq
D\bfS(\ep)(\bfxi,\bfxi)\leq
\Lambda(1+|\ep|^2)^{\frac{p-2}{2}}|\bfxi|^2$$ for $\ep,\bfxi\in
\s^d$ with $\lambda,\Lambda>0$ and $p\in(1,\infty)$. The problem firstly appears in the mathematical literature in the work of Ladyshenskaya and Lions (see \cite{La}-\cite{La3} and \cite{Li}). Regularity results are shown in \cite{Fu4}, \cite{KMS}, \cite{MNRR}, \cite{NW}, \cite{Se} and others.
In contrast to classical problems in nonlinear PDEs like the $p$-Laplace equation we have only control over the symmetric part $\ep(\bfv):=\frac{1}{2}\big(\nabla\bfv+\nabla\bfv^
T\big)$ of the gradient and more important we have the side condition $\Div\bfv=0$. A corresponding approximation theory is developed in \cite{BrDF} and approximates an almost solutions $\bfv\in W^{1,p}_{0,\Div}(B)$ to the $\mathcal A$-Stokes problem by a solution in the $W^{1,s}$-sense for all $s<p$. Let us consider the function spaces 
\begin{align*}
\LD(B)&:=\left\{\bfu\in L^1(B):\,\,\ep(\bfu)\in L^1(B)\right\},\quad
\LD_{0}(B):=\left\{\bfu\in \LD(B):\,\,\bfu|_{\partial B}=0\right\},\\
\LD_{\Div}(B)&:=\left\{\bfu\in \LD(B):\,\,\Div\bfu=0\right\},\quad
\LD_{0,\Div}(B):=\LD_{0}(B)\cap \LD_{\Div}(B),
\end{align*}
where $\bfu|_{\partial B}$ has to be understood in the $\mathcal H^{d-1}(\partial B)$-sense (see \cite{ts}).
A function $\bfv\in \LD_{\Div}(B)$ is a solution to the $\mathcal A $-Stokes problem iff
\begin{align}
\int_B \mathcal A(\ep(\bfv),\ep(\bfphi))\dx=0\quad\forall\bfphi\in C^{\infty}_{0,\Div}(B).
\end{align}
It is an almost solution if we have
\begin{align}
\Big|\dashint_B \mathcal A(\ep(\bfv),\ep(\bfphi))\dx\,\Big|\leq \delta\,\dashint_{B}|\ep(\bfv)|\dx\|\ep(\bfphi)\|_\infty\quad \forall\bfphi\in C^{\infty}_{0,\Div}(B)
\end{align}
with some $\delta\ll1$. Note that the formulation above is the weakest possible as it only requires $\bfv\in \LD(B)$. The approximation result in \cite{BrDF} is based on the solenoidal Lipschitz truncation combined with results developed in \cite{DLSV} for the $\mathcal A$-harmonic approximation in Orlicz spaces.\\
In order to study regularity properties of nonlinear parabolic equations Duzaar and Mingione \cite{DM05} introduce the $\mathcal A$-caloric approximation which compares almost solutions
to the $\mathcal A$-heat equation with its solutions. We call a function $\bfu\in L^1(I;W^{1,1}(B))$ with
\begin{align}\label{eq:neq}
\int_Q \bfu\cdot\partial_t\bfphi\dxt-\int_Q \mathcal A(\nabla\bfu,\nabla\bfphi)\dxt=0\quad\forall\bfphi\in C^{\infty}_0(Q)
\end{align}
$\mathcal A$-caloric on $Q:=I\times B$, where $I\subset\R$ is a bounded interval. If the left hand side of \eqref{eq:neq} is small we talk about an almost $\mathcal A$-caloric function. In \cite{DM05} it is
shown that every almost $\mathcal A$-caloric function $\bfu\in L^p(I;W^{1,p}(B))$ can be approximated by a $\mathcal A$-caloric function in the parabolic $L^p$-sense. This is used to establish partial regularity results for nonlinear parabolic systems (see \cite{DuMiSt} for an overview). In contrast to the elliptic setting there is not so much literature available.\\
The aim of the present paper is to develop an approximation theory for non-stationary problems in fluid mechanics in the fashion of the $\mathcal A$-caloric approximation. Let us be a little bit more precise: A function $\bfv\in L^1(I;\LD_{\Div}(B))$ is a solution to the non-stationary $\mathcal A $-Stokes problem iff
\begin{align}
\int_Q \bfv\cdot\partial_t\bfphi\dxt-\int_Q \mathcal A(\ep(\bfv),\ep(\bfphi))\dx=0\quad\forall\bfphi\in C^{\infty}_{0,\Div}(Q).
\end{align}
It is an almost solution if we have
\begin{align}
\Big|\dashint_Q \bfv\cdot\partial_t\bfphi\dxt-\dashint_Q \mathcal A(\ep(\bfv),\ep(\bfphi))\dx\,\Big|\leq \delta\,\dashint_{Q}|\ep(\bfv)|\dx\|\ep(\bfphi)\|_\infty
\end{align}
for all $\bfphi\in C^{\infty}_{0,\Div}(Q)$ with some $\delta\ll1$. The main results of this paper (see Theorem \ref{Astokes} in section 4) states that every almost solution $\bfv\in L^p(I;W^{1,p}_{0,\Div}(B))$ to the non-stationary $\mathcal A$-Stokes problem can be approximated by a solution in the $L^s(I;W^{1,s}(B))$-sense for all $s<p$. So, we extend the result from \cite{BrDF} to non-stationary flows. Again we are able to work with the weakest formulation of almost solutions.
The main tool is the solenoidal Lipschitz truncation for parabolic PDEs which was recently developed in \cite{BrDS}. We present a version of it which is appropriate for our purposes in section 3. In addition to the results from \cite{BrDS} we show an approximation result for the distributional time derivative of the Lipschitz truncation (see Thm. \ref{cor:appl} d) ). In section 2 we present an $L^q$-theory for the non-stationary $\mathcal A$-Stokes problem in divergence form. It might not be surprising for experts but it is hard to find a reference in literature. \\
The investigation of the parabolic $\mathcal A$-Stokes approximation is a first step towards a partial regularity theory for nonlinear Stokes systems (results for the nonstationary $p$-Navier-Stokes equations are shown in \cite{Se} under certain restrictions on $p$). However the second step is still missing: an appropriate Caccioppoli-type inequality. For the heat equation (and also for parabolic systems with quadratic growth) it can be shown that solutions satisfy
\begin{align}\label{cacc}
\sup_{I_r}\dashint_{B_r}|\bfu|^2\dx+\dashint_{Q_r}|\nabla\bfu|^2\dxt\leq\,c\,\dashint_{Q_{2r}}\Big|\frac{\bfu}{r}\Big|^2\dxt.
\end{align} 
Here $Q_r=I_r\times B_r$ is a parabolic cube with radius $r$. A corresponding version for parabolic systems with $p$-growth also exists (see \cite{DuMiSt}). Due to the appearance of the pressure term it seems to be not possible to show such an estimate for Stokes systems - not even in the linear case. Some modified variants of \eqref{cacc} can be proved, but the known versions are not strong enough to show partial regularity.
%As a main application we present a partial regularity theory for non-stationary power-law fluids in
%section 5 (see Theorem \ref{thm:main}). We extend the results from \cite{Se} which only allow a limited range for values for $p$. The restrictions in \cite{Se} arrise from the appearance of the pressure. By the $\mathcal A$-Stokes approxmiation and a new Caccioppoli-type inequality (see Theorem \ref{thm:cacc}) we can completely avoid the pressure term. 

\newpage

\section{$L^q$-theory for the $\mathcal A$-Stokes system}
\label{sec:reg}
The aim of this section is to present regularity results for the (non-stationary) $\mathcal A$-Stokes system depending on the right hand side (in divergence form). Let us fix for this section a bounded domain $\Omega\subset \R^d$ with $C^2$-boundary and a time interval $(0,T).$
The $\mathcal A$-Stokes problem (in the pressure-free formulation) with right hand side $\bff\in L^1(\Omega)$ reads as: find $\bfv\in \LD_{0,\Div}(\Omega)$ such that
\begin{align}\label{eq:Af}
\int_\Omega \mathcal A(\ep(\bfv),\ep(\bfphi))\dx=\int_{\Omega} \bff\cdot\bfphi\dx\quad\text{for all }\bfphi\in C^\infty_{0,\Div}(\Omega).
\end{align}
The right hand side can also be given in divergence form, i.e.
\begin{align}\label{eq:AF}
\int_\Omega \mathcal A(\ep(\bfv),\ep(\bfphi))\dx=\int_\Omega \bfF:\nabla\bfphi\dx\quad\text{for all }\bfphi\in C^\infty_{0,\Div}(\Omega)
\end{align}
for $\bfF\in L^1(\Omega)$. For certain purposes it is convenient to discuss the problem with a fixed divergence. To be precise for $g\in L^1_0(\Omega)$, where
$$L^q_0(\Omega):=\Big\{f\in L^q(\Omega):\,\, \int_\Omega f\dx=0\Big\},\quad q\geq1,$$ 
we are seeking for a function $\bfv\in \LD_{0}(\Omega)$ with $\Div\bfv=g$ satisfying \eqref{eq:Af} or \eqref{eq:AF}.
We have the following $L^q$-estimates.
\begin{lems}
\label{lems:Aq}
\begin{enumerate}
Let $\Omega\subset\R^d$ be a bounded $C^2$-domain, $1<q<\infty$ and suppose that \eqref{eq:ell} holds.
\item[a)] Let $\bff\in L^q(\Omega)$ and $g\in W^{1,q}(\Omega)$ with $\int_\Omega g\dx=0$. Then there is a unique solution $\bfw\in W^{2,q}\cap W^{1,q}_{0}(\Omega)$ to \eqref{eq:Af} such that $\Div\bfv=g$ and 
\begin{align*}
\dashint_{\Omega}|\nabla^2\bfw|^q\dx\leq \,c\,\dashint_{\Omega}|\bff|^q\dx+c\,\dashint_{\Omega}|\nabla g|^q\dx,
\end{align*}
where $c$ only depends on $\mathcal A$ and $q$.
\item[b)] Let $\bfF\in L^q(\Omega)$ and $g\in L^q_0(\Omega)$. Then there is a unique solution $\bfw\in W^{1,q}_{0}(\Omega)$ to (\ref{eq:AF}) such that $\Div\bfv=g$ and 
\begin{align*}
\dashint_{\Omega}|\nabla\bfw|^q\dx\leq \,c\,\dashint_{\Omega}|\bfF|^q\dx+c\,\dashint_{\Omega}|g|^q\dx,
\end{align*}
where $c$ only depends on $\mathcal A$ and $q$.
\end{enumerate}
\end{lems}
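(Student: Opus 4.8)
The plan is to reduce \eqref{eq:Af} and \eqref{eq:AF} to the $L^q$-theory of the strong (generalized) Stokes system by lifting the prescribed divergence with the Bogovskii operator. First I would deal with the divergence constraint. In case a), for $g\in W^{1,q}(\Omega)$ with $\int_\Omega g\dx=0$, the Bogovskii operator $\Bog$ on $\Omega$ supplies $\bfw_g:=\Bog(g)\in W^{2,q}\cap W^{1,q}_0(\Omega)$ with $\Div\bfw_g=g$ and $\|\nabla^2\bfw_g\|_q+\|\nabla\bfw_g\|_q\le c\|\nabla g\|_q$ (Poincare for the lower order term); writing $\bfw=\bfw_g+\bfz$, the remainder $\bfz$ has to solve \eqref{eq:Af} with $\bff$ replaced by $\bff+\Div(\mathcal A\ep(\bfw_g))\in L^q(\Omega)$, of norm $\le\|\bff\|_q+c\|\nabla g\|_q$, and with $\Div\bfz=0$. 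In case b) one lifts $g\in L^q_0(\Omega)$ to $\bfw_g:=\Bog(g)\in W^{1,q}_0(\Omega)$, $\|\nabla\bfw_g\|_q\le c\|g\|_q$, and is left with \eqref{eq:AF} for $\bfz$ with $\bfF$ replaced by $\bfF-(\mathcal A\ep(\bfw_g))^{\mathrm{sym}}\in L^q(\Omega)$ and $\Div\bfz=0$. Hence it suffices to treat the solenoidal case $g=0$.

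For $g=0$ and $q=2$, a weak solution $\bfw\in W^{1,2}_{0,\Div}(\Omega)$ exists and is unique by Lax--Milgram, the coercivity on $W^{1,2}_{0,\Div}(\Omega)$ following from the ellipticity of $\mathcal A$ and Korn's inequality; for smooth data it is as regular as the $C^2$-boundary permits by elliptic bootstrapping. Since $\Div\colon W^{1,q}_0(\Omega)\to L^q_0(\Omega)$ admits a bounded right inverse (again $\Bog$), de Rham's theorem attaches to any $W^{1,q}$-weak solution a unique $\pi\in L^q_0(\Omega)$ with
\begin{align*}
\int_\Omega\mathcal A(\ep(\bfw),\ep(\bfphi))\dx-\int_\Omega\pi\,\Div\bfphi\dx&=\langle\bff,\bfphi\rangle\qquad\text{for all }\bfphi\in C^\infty_0(\Omega),
\end{align*}
i.e.\ $(\bfw,\pi)$ solves $-\Div(\mathcal A\ep(\bfw))+\nabla\pi=\bff$, $\Div\bfw=0$ in $\Omega$, $\bfw=0$ on $\partial\Omega$ (in case b) the right hand side is the functional $\bfphi\mapsto\int_\Omega\bfF:\nabla\bfphi\dx$), with $\|\pi\|_q\le c(\|\nabla\bfw\|_q+\|\bff\|_q)$ resp.\ $\le c(\|\nabla\bfw\|_q+\|\bfF\|_q)$.

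The heart of the matter is the $L^q$-estimate for this strong system. For every constant elliptic tensor the operator $(\bfw,\pi)\mapsto(-\Div(\mathcal A\ep(\bfw))+\nabla\pi,\Div\bfw)$ is elliptic in the sense of Agmon--Douglis--Nirenberg --- the ellipticity $\mathcal A\bfxi:\bfxi\ge\lambda|\bfxi|^2$ forces the Legendre--Hadamard condition for the symmetrised operator, which together with the incompressibility constraint gives the complementing condition --- so interior and boundary Calderon--Zygmund estimates hold on the $C^2$-domain $\Omega$ and yield, for all $1<q<\infty$,
\begin{align*}
\|\nabla^2\bfw\|_q+\|\nabla\pi\|_q&\le c\,(\|\bff\|_q+\|\bfw\|_q+\|\pi\|_q),\\
\|\nabla\bfw\|_q+\|\pi\|_q&\le c\,(\|\bfF\|_q+\|\bfw\|_q)
\end{align*}
for the two types of right hand side. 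The lower order terms are removed in the standard way: were the clean estimate false, a normalised sequence of solutions with vanishing data would, by Rellich's theorem, converge to a solution of the homogeneous system, which vanishes --- for $q\ge 2$ because it lies in $W^{1,2}$ and coincides with the Lax--Milgram solution, for $q<2$ by a duality argument against the adjoint $\mathcal A^{\mathrm{T}}$-Stokes problem, whose $q'>2$-theory is already available --- contradicting the normalisation. (Alternatively, one may borrow Cattabriga's classical Stokes result and run the method of continuity along the uniformly elliptic path $\mathcal A_t:=(1-t)\mathrm{Id}+t\mathcal A$.) Pushing this estimate up to the boundary for an \emph{arbitrary} elliptic tensor, and cleanly absorbing the lower order terms, is the main obstacle; the remaining steps are soft.

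Finally, with the clean estimates at hand, existence in $W^{2,q}\cap W^{1,q}_0(\Omega)$ in a) (resp.\ $W^{1,q}_0(\Omega)$ in b)) follows by approximating $\bff$ (resp.\ $\bfF$) in $L^q$ by smooth data: the corresponding regular solutions form a Cauchy sequence in the relevant norm by the clean estimates applied to differences, and the limit solves the problem with the same bound; uniqueness is the statement just used to kill the lower order terms. Undoing the lifting, $\bfw=\bfw_g+\bfz$ satisfies $\Div\bfw=g$ and $\|\nabla^2\bfw\|_q\le\|\nabla^2\bfw_g\|_q+\|\nabla^2\bfz\|_q\le c(\|\bff\|_q+\|\nabla g\|_q)$ in a), and $\|\nabla\bfw\|_q\le c(\|\bfF\|_q+\|g\|_q)$ in b); raising to the $q$-th power and dividing by $|\Omega|$ produces the asserted inequalities with mean-value integrals.
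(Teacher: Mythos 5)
Your overall route --- reduce to the solenoidal case, attach the pressure via de Rham, and invoke Agmon--Douglis--Nirenberg estimates for the resulting strong Stokes-type system --- is in substance the same as the paper's, which simply cites \cite{AmGi}, Thm.~4.1 (case $\mathcal A=I$) and observes that the underlying ADN machinery from \cite{ADN1,ADN2} works for any elliptic $\mathcal A$. The difference is that you try to decouple the divergence constraint \emph{before} entering the ADN theory, and this is where part a) breaks down. The Bogovskii operator gains a derivative only on functions with zero boundary trace: it is bounded from $W^{1,q}_0(\Omega)\cap L^q_0(\Omega)$ to $W^{2,q}_0(\Omega)$, but for a general $g\in W^{1,q}(\Omega)$ with $\int_\Omega g\dx=0$ --- which is all the lemma assumes --- one only gets $\Bog(g)\in W^{1,q}_0(\Omega)$ with $\|\nabla \Bog(g)\|_q\le c\|g\|_q$; the claimed bound $\|\nabla^2\Bog(g)\|_q\le c\|\nabla g\|_q$ is false in general (see \cite{Ga1}, Ch.~III.3: the higher-order estimates require $g\in W^{m,q}_0$, since otherwise the integrations by parts in the Bogovskii representation produce uncontrolled boundary terms). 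Consequently $\Div(\mathcal A\ep(\bfw_g))$ need not lie in $L^q(\Omega)$, and your reduction of a) to the case $g=0$ does not go through as written. The standard repair is exactly what \cite{AmGi} does: keep the inhomogeneous divergence inside the system and prove the ADN estimate for the full problem $-\Div(\mathcal A\ep(\bfw))+\nabla\pi=\bff$, $\Div\bfw=g$, $\bfw|_{\partial\Omega}=0$ in one stroke, so that $g$ enters only through $\|g\|_{W^{1,q}}$ on the right-hand side. Part b) is unaffected, since there $\Bog:L^q_0(\Omega)\to W^{1,q}_0(\Omega)$ is all you need.

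The remainder of your argument (Lax--Milgram plus Korn for $q=2$, the complementing condition for the constant-coefficient elliptic tensor, removal of the lower-order terms by compactness and uniqueness, duality for $q<2$, approximation by smooth data) is a reasonable reconstruction of what the cited references contain, and is no less rigorous than the paper's one-line appeal to the fact that the ADN theory covers arbitrary elliptic tensors; only the lifting step in a) needs to be replaced.
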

In case $\mathcal A=I$ both parts follow from \cite{AmGi}, Thm 4.1. However, the main tool in \cite{AmGi} is the theory from
\cite{ADN1,ADN2} where very general linear systems are investigated. Hence it is clear that the results also hold in case of an arbitrary
elliptic tensor $\mathcal A$.

Now we turn to the parabolic problem and the first result is a local $L^q$-estimate for weak solutions. In case of the $\mathcal A$-heat system this follows from the continuity of the corresponding semigroup (see \cite{Sh}). It is also known for the non-stationary Stokes-system (see \cite{So} and \cite{Gi}). In case of the Stokes system one has $\mathcal A=I$ such that $$\Div \Div\mathcal A\bfv=\Div\Delta\bfv=\Delta\Div\bfv=0$$
at least in the sense of distributions. As this does not hold for general $\mathcal A$, the known methods for the Stokes-system do not apply in our setting.
\begin{them}
\label{thm:Lqlocal}
Let $\bff\in L^q(Q_0)$ for some $q>2$, where $Q_0:=(0,T)\times\R^d$ and suppose that \eqref{eq:ell} holds. Let $\bfv\in L^1((0,T);\LD_{\Div}(\R^d))$ be a weak solution to
\begin{align}
\int_{Q_0} \bfv\cdot\partial_t\bfphi\dxt-\int_{Q_0} \mathcal A(\ep(\bfv),\ep(\bfphi))\dx=\int_{Q_0}\bff\cdot\bfphi\dxt
\end{align}
for all $\bfphi\in C^{\infty}_{0,\Div}([0,T)\times \R^d)$. Then we have $\nabla^2\bfv\in L^q_{loc}(Q_0)$ and there holds
\begin{align*}
\int_0^T\int_{B}|\nabla^2\bfv|^q\dxt\leq\,c_B\, \int_{Q_0}|\bff|^q\dxt
\end{align*}
for all balls $B\subset\R^d$.
\end{them}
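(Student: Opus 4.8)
The plan is to read the estimate as a maximal parabolic regularity statement for the $\mathcal A$-Stokes operator and to carry it out in three moves: reduce the second order spatial bound to a bound on $\partial_t\bfv$; localise in space so that the very weak hypothesis $\bfv\in L^1((0,T);\LD_{\Div}(\R^d))$ and the merely local right-hand side cause no trouble; and establish the maximal regularity bound by adapting the known non-stationary Stokes theory ($\mathcal A=I$, see \cite{So,Gi}) to a general elliptic $\mathcal A$, with Lemma~\ref{lems:Aq} as the elliptic input. For the reduction, suppose that we already know $\partial_t\bfv\in L^q_{loc}(Q_0)$ together with $\int_{Q_0}|\partial_t\bfv|^q\dxt\le c\int_{Q_0}|\bff|^q\dxt$. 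For a.e.\ $t$ the slice $\bfv(t,\cdot)$ then solves the \emph{stationary} $\mathcal A$-Stokes system with solenoidal constraint and right-hand side $\bff(t,\cdot)-\partial_t\bfv(t,\cdot)$ --- nothing is lost in the pressure-free formulation since $\Div\partial_t\bfv=0$ --- and a cut-off argument together with the interior form of Lemma~\ref{lems:Aq}~a) bounds $\int_{B_R(x_0)}|\nabla^2\bfv(t,\cdot)|^q\dx$ by $c\int_{B_{2R}(x_0)}\big(|\bff|^q+|\partial_t\bfv|^q+|\nabla\bfv|^q+|\bfv|^q\big)(t,\cdot)\dx$; integrating in $t$ reduces everything to the maximal regularity bound $\|\partial_t\bfv\|_{L^q(Q_0)}\le c\|\bff\|_{L^q(Q_0)}$, the lower order terms being absorbed at the end.

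For the localisation I would fix $x_0$ and $R>0$, pick $\eta\in C_0^\infty(B_{2R}(x_0))$ with $\eta\equiv1$ on $B_R(x_0)$ and $|\nabla^k\eta|\le cR^{-k}$, and with a Bogovskii operator $\Bog$ on $B_{2R}(x_0)$ set $\bfw:=\eta\bfv-\Bog(\nabla\eta\cdot\bfv)$, extended by zero; then $\Div\bfw=0$, $\spt\bfw\subset\overline{B_{2R}(x_0)}$, and $\bfw$ inherits the vanishing initial trace of $\bfv$ that is encoded in the weak formulation (the admissible test functions do not vanish at $t=0$). Testing the equation for $\bfv$ with $\eta\bfphi-\Bog(\nabla\eta\cdot\bfphi)$ and rearranging shows $\bfw$ to be a weak solution on $Q_0$ of a non-stationary $\mathcal A$-Stokes system whose data are $\eta\bff$ plus commutator terms built from $\nabla\eta,\nabla^2\eta$ acting on $\bfv,\ep(\bfv),\nabla\bfv$ (and $\Bog$ applied to such quantities), all supported in $\overline{B_{2R}(x_0)}$ and containing at most one spatial derivative of $\bfv$; the time derivative of the Bogovskii correction is re-expressed, by duality for $\Bog$, as a further term of the same lower order type.

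It then remains to bound $\partial_t\bfw$ and $\nabla^2\bfw$ in $L^q(Q_0)$ by the $L^q$-norm of these data. I would show that the $\mathcal A$-Stokes operator $\mathcal S_q:=\mathbb P\,\Div(\mathcal A\ep(\cdot))$ on the solenoidal space $L^q_\sigma(\R^d)$ generates a bounded analytic semigroup with maximal $L^q$-regularity: because $\mathcal A$ is constant and elliptic and $\mathbb P$ is the Helmholtz projection, the resolvent $(\mu-\mathcal S_q)^{-1}$ is a Fourier multiplier whose symbol --- elliptic on the solenoidal subspace $\{\xi\}^\perp$ --- satisfies the Mikhlin/Weis $\mathcal R$-bound conditions uniformly on a sector, exactly as for $\mathcal A=I$ in \cite{So,Gi}; the only new ingredient is that the underlying elliptic resolvent estimate is provided by (the whole-space version of) Lemma~\ref{lems:Aq}. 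With zero initial data this gives $\|\partial_t\bfw\|_{L^q(Q_0)}+\|\mathcal S_q\bfw\|_{L^q(Q_0)}\le c\,\|\text{data}\|_{L^q(Q_0)}$, and Lemma~\ref{lems:Aq}~a) (applied on a fixed $C^2$ domain containing $\overline{B_{2R}(x_0)}$, with $g=0$) upgrades $\mathcal S_q\bfw\in L^q$ to $\nabla^2\bfw\in L^q$ with the same bound; restricting to $B_R(x_0)$ gives the asserted estimate for $\bfv$ up to the lower order remainders.

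Those remainders involve only $\bfv,\ep(\bfv),\nabla\bfv$, i.e.\ quantities one order below $\nabla^2\bfv$ and $\partial_t\bfv$. A local energy (Caccioppoli) estimate --- obtained by testing with $\eta^2$ times a time-regularisation of $\bfv$ and using Korn's inequality and the ellipticity of $\mathcal A$ --- places $\bfv$ in $L^\infty_{loc}(L^2_{loc})\cap L^2_{loc}(W^{1,2}_{loc})$, and a finite bootstrap over a chain of concentric balls, combined with the parabolic Sobolev embedding, raises the integrability of $\bfv,\nabla\bfv$ up to $q$; since $\bff\in L^q(Q_0)$ and $\bfv\in L^1((0,T);\LD_{\Div}(\R^d))$ hold \emph{globally} in space, all lower order contributions are ultimately dominated by $\|\bff\|_{L^q(Q_0)}$, which proves the theorem on every ball $B$. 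I expect the main obstacle to be the maximal regularity step: proving $\mathcal R$-boundedness of the resolvent family of $\mathbb P\Div\mathcal A\ep$ on solenoidal $L^q(\R^d)$ for an arbitrary constant elliptic $\mathcal A$ rather than for the Laplacian --- in tandem with the bookkeeping in the localisation, where cut-offs interact with both the divergence constraint and the implicit pressure and generate the Bogovskii time-derivative term that must be absorbed.
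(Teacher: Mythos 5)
Your proposal is coherent and would, if carried out, prove the theorem, but it follows a genuinely different route from the paper. You treat the estimate as a maximal parabolic regularity statement: localise with a Bogovski\u{\i} correction, prove $\mathcal R$-boundedness of the resolvent of $\mathbb P\Div\mathcal A\ep(\cdot)$ on solenoidal $L^q(\R^d)$ by Fourier multiplier theory, deduce $\partial_t\bfw,\,\mathcal S_q\bfw\in L^q$, and convert back to $\nabla^2\bfw$ via elliptic theory. The paper instead runs a Caffarelli--Peral/Byun--Wang good-$\lambda$ argument: on each parabolic cube $Q_4$ it compares $\bfv$ with the solution $\bfh$ of the \emph{homogeneous} problem with the same boundary data, obtains $L^2$ bounds on $\nabla^2\bfv-\nabla^2\bfh$ by testing with $\bfv-\bfh$ and with $\curl(\eta^2\curl(\bfv-\bfh))$ (which eliminates the pressure entirely in the interior), uses interior smoothness of $\bfh$ to control $\sup_{Q_3}|\nabla^2\bfh|$, and then iterates a level-set estimate for the parabolic maximal function of $|\nabla^2\bfv|^2$ to reach $L^q$. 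What your route buys is a cleaner functional-analytic statement (and indeed the paper itself uses the semigroup/square-root calculus for Theorem \ref{lem:reg2}); what the paper's route buys is that it needs only $L^2$ energy estimates, qualitative smoothness of the comparison function, and the Hardy--Littlewood maximal theorem --- no multiplier or $\mathcal R$-boundedness machinery --- and it never has to re-introduce the pressure through cut-offs. Two caveats on your plan: the whole-space resolvent estimate cannot be quoted from Lemma \ref{lems:Aq} (which is for bounded $C^2$ domains); it must be read off the symbol $P(\xi)A(\xi)$ on $\{\xi\}^{\perp}$, which your ellipticity computation does support. And identifying the localised $\bfw$ with the maximal-regularity solution requires a uniqueness statement in the very weak class $L^1(I;\LD_{\Div})$; the paper sidesteps the analogous point by invoking standard interior $L^2$ regularity (its footnote), and you would need to make the same a priori upgrade explicit before the duality/closed-graph step that removes the lower-order terms from the final estimate.
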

\begin{proof}
The main ingredient is the proof of the following auxiliary result which has been used in a similar version
 in \cite{ByWa}. A main difference is that we have to take into account the divergence-free constraint.\\
We say that $Q' = I' \times B' \subset \setR \times
\Rdr$ is a parabolic cylinder if $r_{I'} =
r_{B'}^2$. For $\kappa>0$ we define the scaled cylinder $\kappa Q' :=
(\kappa I') \times (\kappa B')$.  By $\mathcal{Q}$ we denote
the set of all parabolic cylinders.  We define the
parabolic maximal operators $\mathcal M$ and $\mathcal M_s$ for $s \in
[1,\infty)$ by
%\marginpar{D: $\Ma_s$ f{\"u}r $s>1$ brauchen wir nicht mehr, oder?}
\begin{align*}
  (\mathcal M f)(t,x) &:= \sup_{ Q' \in
    \mathcal{Q}\,:\,(t,x) \in Q'} \dashint_{Q'} \abs{f(\tau,y)} \dtau \dy,
  \\
  \mathcal M_s f(t,x) &:= \big(\mathcal M(\abs{f}^s)(t,x)\big)^{\frac 1s}.
\end{align*}
It is standard\cite{Ste93} that for all $q \in (s,\infty]$
\begin{align}
  \label{eq:Mascont}
  \norm{\Ma_s f}_{L^q(\R^{n+1})} &\leq c\, \norm{f}_{L^q(\R^{n+1})}.
\end{align}
\begin{itemize}
\item[i)] We start with interior estimates. Let $Q_r:=Q_r(x_0,t_0):=(t_0-r^2,t_0+r^2)\times B_r(x_0)$ be a parabolic cylinder such that $4Q_r\subset Q_0$. We claim the following: There is a constant $N_1>0$ such that for every $\varepsilon>0$ there is $\delta=\delta(\varepsilon)>0$ such that\footnote{Since $\bff\in L^2(0,T;L^2_{loc}(\R^d))$ the standard interior regularity theory implies $\nabla^2\bfv\in L^2(0,T;L^2_{loc}(\R^d))$ and  $\partial_t\bfv\in L^2(0,T;L^2_{loc}(\R^d))$.}
\begin{align}\label{imp1}
\begin{aligned}
&\mathcal L^{d+1}\big(Q_r\cap \set{\mathcal M (|\bff|)^2>N_1^2}\big)\geq \varepsilon\mathcal L^{d+1}(Q_r)\\
\Rightarrow&\,\, Q_r\subset \set{\mathcal M(|\nabla^2\bfv|^2)>1}\cup \set{\mathcal M(|\bff|^2)>\delta^2}.
\end{aligned}
\end{align}
Let us assume for simplicity that $r=1$. In fact, we will establish \eqref{imp1} by showing 
\begin{align}\label{imp2}
\begin{aligned}
&Q_1\cap \set{\mathcal M(|\nabla^2\bfv|^2)\leq 1}\cap \set{\mathcal M(|\bff|^2)\leq \delta^2}\neq\emptyset\\
\Rightarrow&\,\,\mathcal L^{d+1}\big(Q_1\cap \set{\mathcal M (|\nabla\bfv|)^2>N_1^2}\big)< \varepsilon\mathcal L^{d+1}(Q_1)
\end{aligned}
\end{align}
and applying a simple scaling argument.
In order to show \eqref{imp2} we compare $\bfv$ with a solution to a homogeneous problem on $Q_4=(t_0^4-4^2,t_0^4+4^2)\times B_4(x_0^4)\subset Q_0$ (with the same boundary data) which is smooth in the interior. So let us define $\bfh$ as the unique solution to
\begin{align}
  \left\{\begin{array}{rlc}
     \partial_t\bfh- \Div\mathcal A(\ep(\bfh)) &=\nabla \pi_{\bfh}& \mbox{in $Q_4$,}\\
      \Div \bfh&=0\qquad& \mbox{in $Q_4$,}\\
      \bfh&=\bfv\qquad\quad& \mbox{ \,on
        $I_4\times\partial B_4$,}\\
\bfh(t_0^4,\cdot)&=\bfv(t_0^{4},\cdot)\qquad\quad& \mbox{ \,in
        $B_4$.}
\end{array}\right.\label{eq:h}
\end{align}
We test the difference of both equations with $\bfv-\bfh$. This yields by the ellipticity of $\mathcal A$
\begin{align*}
\sup_{t\in I_4}\int_{B_4}|\bfv(t)-\bfh(t)|^2\dx&+\int_{Q_4}|\ep(\bfv)-\ep(\bfh)|^2\dxt\\&\leq \,c\,\int_{Q_4}|\bff|^2\dxt+\,c\,\int_{Q_4}|\bfv-\bfh|^2\dxt.
\end{align*}
An application of Korn's inequality and Gronwall's lemma implies
\begin{align}\label{eq:cacc1}
\sup_{t\in I_4}\int_{B_4}|\bfv(t)-\bfh(t)|^2\dx+\int_{Q_4}|\nabla\bfv-\nabla\bfh|^2\dxt\leq \,c\,\int_{Q_4}|\bff|^2\dxt.
\end{align}
First we insert $\partial_t(\bfv-\bfh)$ which yields similarly
\begin{align}\label{eq:cacc1t}
\int_{Q_4}|\partial_t(\bfv-\bfh)|^2\dx+\sup_{t\in I_4}\int_{B_4}|\nabla\bfv-\nabla\bfh|^2\dx\leq \,c\,\int_{Q_4}|\bff|^2\dxt.
\end{align}
We can introduce the pressure terms $\pi_\bfv,\pi_{\bfh}\in L^2(I_4,L^2_0(B_4))$ in the equations for $\bfv$ and $\bfh$ and show
\begin{align}\label{eq:cacc1pi}
\int_{Q_4}|\pi_\bfv-\pi_{\bfh}|^2\dx\leq \,c\,\int_{Q_4}|\bff|^2\dxt.
\end{align}
Estimate \eqref{eq:cacc1+pi} can be shown by using the \Bogovskii-operator
introduced in \cite{Bog}. It is a solution operator to the divergence equation on a bounded Lipschitz domain $\Omega$ with respect to zero boundary conditions. The corresponding \Bogovskii-operator $\Bog_\Omega$ is continuous from
 $L^{2}_0(\Omega)\rightarrow W^{1,2}_0(\Omega).$
Setting $\Bog=\Bog_{B_4}$ we gain
due to \eqref{eq:cacc1+t} for any $\varphi\in C^\infty_0(Q_4)$ that
\begin{align*}
\int_{Q_4}&(\pi_\bfv-\pi_{\bfh})\varphi\dxt=\int_{Q_4}(\pi_\bfv-\pi_{\bfh})\Div \Bog(\varphi-\varphi_{B_4})\dxt\\
&=\int_{Q_4}\mathcal A\big(\ep(\bfv-\bfh),\ep\big( \Bog(\varphi-\varphi_{B_4})\big)\big)\dxt+\int_{Q_4}\bff\cdot \Bog(\varphi-\varphi_{B_4})\big)\dxt\\
&-\int_{Q_4}\partial_t(\bfv-\bfh)\cdot \Bog(\varphi-\varphi_{B_4})\dxt\\
&\leq \,c\,\Big(\|\nabla(\bfv-\bfh) \|_2+\|\bff\|_2+\|\partial_t(\bfv-\bfh)\|_2\Big)\big\|\nabla\Bog(\varphi-(\varphi)_{B_4})\big\|_{2}\\
&\leq \,c\,\bigg(\int_{Q_4}|\bff|^2\dxt\bigg)^{\frac{1}{2}}\bigg(\int_{Q_4}|\varphi|^2\dxt\bigg)^{\frac{1}{2}}.
\end{align*}
Now we choose a cut off function $\eta\in C^\infty_0(B_4)$ with $0\leq\eta\leq1$ and $\eta\equiv1$ on $B_3$.
We insert $\partial_\gamma(\eta^2\partial_\gamma(\bfv-\bfh))$in the equation for $\bfv-\bfh$ and sum over $\gamma\in\set{1,...,d}$. We gain
\begin{align*}
\sup_{t\in I_4}\int_{B_4}&\eta^2|\nabla(\bfv-\bfh)|^2\dx+\int_{Q_4}\eta^2|\nabla\ep(\bfv)-\nabla\ep(\bfh)|^2\dxt\\&\leq \,c\,\int_{Q_4}\bff\cdot\partial_\gamma(\eta^2\partial_\gamma(\bfh-\bfh))\dxt+\,c(\nabla\eta)\,\int_{Q_4}|\nabla\bfv-\nabla\bfh|^2\dxt
\\&+c\,\int_{Q_4}(\pi-\pi_\bfh)\cdot\partial_\gamma(\nabla\eta^2\cdot\partial_\gamma(\bfh-\bfh))\dxt
\end{align*}
We estimate the term involving $\bff$ by
\begin{align*}
\int_{Q_4}&\bff\cdot\partial_\gamma(\eta^2\partial_\gamma(\bfv-\bfh))\dxt\\&\leq\,\,c(\kappa)\int_{Q_4}|\bff|^2\dxt+\kappa\int_{Q_4}\eta^2|\nabla^2\bfv-\nabla^2\bfh|^2\dxt+c(\nabla\eta)\,\int_{Q_4}|\nabla\bfv-\nabla\bfh|^2\dxt,
\end{align*}
where $\kappa>0$ is arbitrary.
The term involving $\pi-\pi_\bfh$ can be estimated in the same fashion.
Choosing $\kappa>0$ small enough and using the inequality $|\nabla^2\bfu|\leq \,c\,|\nabla\ep(\bfu)|$ as well as \eqref{eq:cacc1}--\ref{eq:cacc1pi} shows
\begin{align}\label{eq:cacc2}
\sup_{t\in I_4}\int_{B_3}|\nabla(\bfv-\bfh)(t)|^2\dx+\int_{I_4}\int_{B_3}|\nabla^2\bfv-\nabla^2\bfh|^2\dxt\leq \,c\,\int_{Q_4}|\bff|^2\dxt.
\end{align}
Now, let us assume that (\ref{imp2})$_1$ holds. Then there is a point $(t_0,x_0)\in Q_1$ such that
\begin{align}\label{eq:2.28}
\dashint_{Q_\sigma(t_0,x_0)}|\nabla^2\bfv|^2\dxt\leq 1,\quad \dashint_{Q_\sigma(t_0,x_0)}|\bff|^2\dxt\leq \delta^2\quad \forall \sigma>0.
\end{align}
Since $Q_4\subset Q_6(t_0,x_0)$ we have
\begin{align}\label{eq:3.29}
\int_{Q_4}|\nabla^2\bfv|^2\dxt\leq c,\quad \int_{Q_4}|\bff|^2\dxt\leq \,c\delta^2.
\end{align}
As $\bfh$ is smooth we know that 
\begin{align}\label{eq:3.33}
N_0^2:=\sup_{Q_3}|\nabla^2\bfh|^2<\infty.
\end{align}
From this we aim to conclude that
\begin{align}\label{eq:3.34}
Q_1\cap \set{\mathcal M(|\nabla^2\bfv|^2)>N_1^2}\subset Q_1\cap \set{\mathcal M(\chi_{Q_3}|\nabla^2\bfv-\nabla^2\bfh|^2)>N_0^2}
\end{align}
for $N_1^2:=\max\set{4N_0^2,2^{d+2}}$. To establish \eqref{eq:3.34} suppose that
\begin{align}\label{eq:3.35}
(t,x)\in Q_1\cap \set{\mathcal M(\chi_{Q_3}|\nabla^2\bfv-\nabla^2\bfh|^2)\leq N_0^2}.
\end{align}
If $\sigma\leq 2$ we have $Q_{\sigma}(t,x)\subset Q_3$ and gain by (\ref{eq:3.33})
\begin{align*}
\dashint_{Q_\sigma(t,x)}&|\nabla^2\bfv|^2\dxt\\&\leq 2\,\dashint_{Q_\sigma(t,x)}\chi_{Q_3}|\nabla^2\bfv-\nabla^2\bfh|^2\dxt+2\,\dashint_{Q_\sigma(t,x)}\chi_{Q_3}|\nabla^2\bfh|^2\dxt\\
&\leq\,4N_0^2.
\end{align*}
If $\sigma\geq2$ we have by \eqref{eq:2.28}
\begin{align*}
\dashint_{Q_\sigma(t,x)}&|\nabla^2\bfv|^2\dxt\leq 2^{d+2}\dashint_{Q_{2\sigma}(t_0,x_0)}|\nabla^2\bfv|^2\dxt\leq 2^{d+2}.
\end{align*}
Combining the both cases yields \eqref{eq:3.34}. This implies together with the continuity of the maximal function on $L^2(\R^2)$,  \eqref{eq:cacc2} and \eqref{eq:3.29}
\begin{align*}
\mathcal L^{d+1}\big(Q_1\cap \set{\mathcal M (|\nabla^2\bfv|^2)>N_1^2}\big)&\leq \mathcal L^{d+1}\big(Q_1\cap \set{\mathcal M (\chi_{Q_3}|\nabla^2\bfv-\nabla^2\bfh|^2)>N_0^2}\big)\\
&\leq\,\frac{c}{N_0^2}\int_{Q_3}|\nabla^2\bfv-\nabla^2\bfh|^2\dxt\\
&\leq\,\frac{c}{N_0^2}\int_{Q_3}|\bff|^2\dxt
\leq \frac{c}{N_0^2}\delta^2\\
&= \varepsilon\mathcal L^{d+1}(Q_1),
\end{align*}
choosing $\delta:=c^{-1/2}N_0\sqrt{\varepsilon}$.
So we have shown (\ref{imp2}) which yields (\ref{imp1}) by a scaling argument.\\
If (\ref{imp1})$_1$ holds then we have
\begin{align*}
\mathcal L^{d+1}&\big(Q_r\cap \set{\mathcal M (|\nabla^2\bfv|)^2>N_1^2}\big)\\&\leq \,\varepsilon\mathcal L^{d+1}\Big(Q_r\cap\set{\mathcal M(|\nabla^2\bfv|^2)>1}\cup \set{\mathcal M(|\bff|^2)>\delta^2}\Big)\\
&\leq \,\varepsilon\Big(\mathcal L^{d+1}\big(Q_r\cap\set{\mathcal M(|\nabla^2\bfv|^2)>1}\big)+\mathcal L^{d+1}\big(Q_r\cap \set{\mathcal M(|\bff|^2)>\delta^2}\big)\Big).
\end{align*}
Multiplying the equation for $\bfv$ by some small number $\varrho=\varrho(\|\bff\|_{q},\|\nabla^2\bfv\|_{2})$ we can assume that
\begin{align}
\mathcal L^{d+1}\big(Q_r\cap\set{\mathcal M(|\nabla^2\bfv|^2)>N_1^2}\big)<\varepsilon.
\end{align}
By induction we can establish that
\begin{align*}
\mathcal L^{d+1}&\big(Q_r\cap \set{\mathcal M (|\nabla^2\bfv|^2)>N_1^{2k}}\big)\\
&\leq \,{\varepsilon}^k\mathcal L^{d+1}\big(Q_r\cap\set{\mathcal M(|\nabla^2\bfv|^2)>1}+c\,\sum_{i=1}^k{\varepsilon}^i\mathcal L^{d+1}\big(Q_r \cap\set{\mathcal M(|\bff|^2)>\delta^2 N_1^{2(k-i)}}\big).
\end{align*}
In the induction step one has to introduce $\bfu_1:=\frac{\bfu}{N_1}$ which is a solution to the $\mathcal A$-Sokes problem with right hand side $\bff_1:=\frac{\bff}{N_1}$.
Now we will show $\mathcal M(\nabla^2\bfv)\in L^q(Q_r)$ (which implies $\nabla^2\bfv\in L^q(Q_r)$). This follows if we can prove that
\begin{align}\label{july23b}
\sum_{k=1}^\infty (N_1^2)^{qk}\mathcal L^{d+1}\big(Q_r\cap\set{\mathcal M(|\nabla^2\bfv|^2)>N_1^{2k}}\big)<\infty,
\end{align}
see \cite{C}.
Since $\bff\in L^q(Q_0)$ and $q>2$ we have  $\mathcal M(|\bff|^2)\in L^{q/2}(Q_0)$ and hence
\begin{align*}
&\sum_{k=1}^\infty (N_1^2)^{qk}\mathcal L^{d+1}\big(Q_r\cap\set{\mathcal M(|\nabla^2\bfv|^2)>N_1^{2k}}\big)\\
&\leq \,c\,\sum_{k=1}^\infty (N_1^2)^{qk}\varepsilon^k\mathcal L^{d+1}\big(Q_r\cap\set{\mathcal M(|\nabla^2\bfv|^2)>1}\big)\\
&+\,c\,\sum_{k=1}^\infty (N_1^2)^{qk}\sum_{i=1}^k\varepsilon^i\mathcal L^{d+1}\big( Q_r\cap\set{\mathcal M(|\bff|^2)>\delta^2 N_1^{2(k-i)}}\big)\\
&\leq \,c_r\,\sum_{k=1}^\infty (\varepsilon N_1^{2})^{qk}+\,c\,\sum_{i=1}^\infty \varepsilon^i(N_1^2)^{qi}\sum_{k=i}^\infty(N_1^2)^{q(k-i)}\mathcal L^{d+1}\big( Q_r\cap\set{\mathcal M(|\bff|^2)>\delta^2 N_1^{2(k-i)}}\big)\\
&\leq \,c_r\,\sum_{k=1}^\infty (\varepsilon N_1^q)^{2k}.
\end{align*}
If we choose $\varepsilon N_1^q<1$ the sum in \eqref{july23b} is converging and we have $\nabla^2\bfv\in L^q(Q_r)$. Since the mapping $\bff\mapsto\nabla^2\bfv$ is linear we gain the desired estimate
\begin{align}\label{eq:Lqint}
\int_{Q_r}&|\nabla^2\bfv|^q\dxt\leq\,c_r\,\int_{Q_0}|\bff|^q\dxt.
\end{align}
\item[ii)] Now let $Q_1$ be a cylinder such that $4Q_1\cap (-\infty,0]\times\R^{d}\neq\emptyset$. Moreover, assume that $Q_1\cap Q_0\neq\emptyset$. We consider the solution $\tilde{\bfh}$ to 
\begin{align}
  \left\{\begin{array}{rlc}
     \partial_t\tilde{\bfh}- \Div\mathcal A(\ep(\tilde{\bfh})) &=\nabla \pi_{\tilde{\bfh}}& \mbox{in $\tilde{Q}_4$,}\\
      \Div \tilde{\bfh}&=0\qquad& \mbox{in $\tilde{Q}_4$,}\\
      \tilde{\bfh}&=\bfv\qquad\quad& \mbox{ \,on
        $\tilde{I}_4\times\partial B_4$,}\\
\tilde{\bfh}(t_0^4,\cdot)&=0\qquad\quad& \mbox{ \,in
        $B_4$,}
\end{array}\right.\label{eq:h}
\end{align}
where $\tilde{I}_m:=I_m\cap(0,T)$ and $\tilde{Q}_m:=\tilde{I}_m\times B_m$. We can establish a variant of (\ref{eq:cacc2}) on $\tilde{Q}_4$. Now we have $\sup_{\tilde{Q}_3}|\nabla^2\tilde{\bfh}|^2<\infty$ due to the smooth initial datum of $\tilde{\bfh}$ (recall that $\bfv(0,\cdot)=0$ a.e.). So we can finish the proof as before and gain  $\nabla^2\bfv\in L^q(Q_1)$. This implies again (\ref{eq:Lqint}).
\item[iii)] The situation $4Q_1\cap [T,\infty)\times\R^{d}\neq\emptyset$ is uncritical again and we can assume that ii) and iii) do not occur for the same cylinder (by choosing sufficiently small cubes)
\end{itemize}
Covering the set $(0,T)\times B$ by smaller cylinders and combing i)-iii) yield the desired estimate.
\end{proof}

\begin{corollary}
\label{cor:Lqlocal}
Under the assumptions of Theorem \ref{thm:Lqlocal} we have for all balls $B\subset\R^d$ the following estimates for some constant $c_B$ which does not depend on $T$.
\begin{itemize}
\item[a)] There holds
\begin{align*}
\int_0^T\int_{B}\bigg(\Big|\frac{\bfv}{T}\Big|^q+\Big|\frac{\nabla \bfv}{\sqrt{T}}\Big|^q+|\nabla^2 \bfv|^q\bigg)\dxt\leq \,c_B\,\int_{Q_0}|\bff|^q\dxt.
\end{align*}
\item[b)] We have $\partial_t\bfv\in L^q(0,T;L^q_{loc}(\R^d))$ together with 
\begin{align*}
\int_0^T\int_{B}|\partial_t\bfv|^q\dxt\,\leq \,c_B\,\int_{Q_0}|\bff|^q\dxt.
\end{align*}
\item[c)] There is $\pi\in L^q((0,T),W^{1,q}_{loc}(\R^d))$ such that
\begin{align*}
\int_{Q_0} \bfv\cdot\partial_t\bfphi\dxt&-\int_{Q_0} \mathcal A(\ep(\bfv),\ep(\bfphi))\dx\\&=\int_{Q_0}\pi\,\Div\bfphi\dxt+\int_{Q_0}\bff\cdot\bfphi\dxt
\end{align*}
for all $\bfphi\in C^{\infty}_{0}([0,T)\times\R^{d})$.
\item[d)] There holds
\begin{align*}
\int_0^T\int_{B}\bigg(\Big|\frac{\pi}{\sqrt{T}}\Big|^q+|\nabla\pi|^q\bigg)\dxt\leq \,c_B\,\int_{Q_0}|\bff|^q\dxt.
\end{align*}
\end{itemize}
\end{corollary}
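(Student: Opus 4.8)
Of the objects in a)--d) only the second derivative $\nabla^2\bfv$ is already under control --- that is exactly the content of Theorem~\ref{thm:Lqlocal} --- so the plan is to construct the pressure first (part c)), then to read off the bounds for $\partial_t\bfv$ and $\nabla\pi$ (parts b) and d)), and finally to recover the weighted lower order terms in a) and d). Recall that the test functions in the weak formulation are allowed not to vanish at $t=0$, which encodes the initial condition $\bfv(0,\cdot)=0$; I shall use this once $\partial_t\bfv$ is known to be a genuine function.

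\emph{The pressure.} By the footnote to Theorem~\ref{thm:Lqlocal} the interior $L^2$-theory already provides $\partial_t\bfv,\nabla^2\bfv\in L^2(0,T;L^2_{loc})$, so the spatial integration by parts $\int\mathcal A(\ep(\bfv),\ep(\bfphi))\dxt=-\int\Div\mathcal A(\ep(\bfv))\cdot\bfphi\dxt$ is legitimate. For solenoidal $\bfphi\in C^\infty_{0,\Div}((0,T)\times\R^d)$ the weak equation then reads $\int_{Q_0}(\partial_t\bfv-\Div\mathcal A(\ep(\bfv))+\bff)\cdot\bfphi\dxt=0$, so by de Rham's theorem $\partial_t\bfv-\Div\mathcal A(\ep(\bfv))+\bff=\nabla\pi$ for some $\pi$, which I normalise by $\int_{B_1}\pi(t,\cdot)\dx=0$ for a.e.\ $t$; this is already the identity claimed in c). Taking the divergence and using $\Div\bfv=0$,
\begin{align*}
\Delta\pi=\Div\bff-\Div\Div\mathcal A(\ep(\bfv))\in W^{-1,q}_{loc}(Q_0),
\end{align*}
since $\bff$ and $\nabla^2\bfv$ lie in $L^q_{loc}$ by Theorem~\ref{thm:Lqlocal}. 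As $\nabla\pi$ is already in $L^2_{loc}$, the interior Calder\'on--Zygmund estimate for the Laplacian bootstraps this to $\nabla\pi\in L^q_{loc}(Q_0)$, which completes c).

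\emph{The estimates.} Once $\nabla\pi\in L^q_{loc}$ we have $\partial_t\bfv=\Div\mathcal A(\ep(\bfv))-\bff+\nabla\pi$ a.e.\ on $Q_0$; running the Calder\'on--Zygmund step quantitatively on a slightly larger ball $B'\supset B$ (absorbing the lower order term through the interior $L^2$-bound) and invoking Theorem~\ref{thm:Lqlocal} gives $\int_0^T\int_B|\nabla\pi|^q\dxt\le c\int_{Q_0}|\bff|^q\dxt$, and the displayed formula for $\partial_t\bfv$ then yields b). For the $\bfv/T$-term I would use $\bfv(0,\cdot)=0$ to write $\bfv(t,\cdot)=\int_0^t\partial_s\bfv(s,\cdot)\,\mathrm{d}s$, so that H\"older's inequality in time gives
\begin{align*}
\int_0^T\!\!\int_B\Big|\frac{\bfv}{T}\Big|^q\dxt\le\int_0^T\!\!\int_B|\partial_t\bfv|^q\dxt\le c\int_{Q_0}|\bff|^q\dxt .
\end{align*}
For $\nabla\bfv/\sqrt T$ I would invoke the Gagliardo--Nirenberg inequality $\|\nabla\bfv(t)\|_{L^q(B)}\le c\,\|\nabla^2\bfv(t)\|_{L^q(B')}^{1/2}\|\bfv(t)\|_{L^q(B')}^{1/2}+c\,r_B^{-1}\|\bfv(t)\|_{L^q(B')}$, integrate in $t$, apply the Cauchy--Schwarz inequality in $t$ and feed in the two preceding bounds; the $\pi/\sqrt T$-term in d) is handled the same way, passing from $\nabla\pi$ back to $\pi$ by the Poincar\'e inequality adapted to the chosen normalisation.

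\emph{Main obstacle.} Apart from the pressure construction the corollary is post-processing of Theorem~\ref{thm:Lqlocal}, and the one genuinely delicate issue is the $T$-independence of all constants. The clean route is to prove every estimate first for $T=1$ --- where Theorem~\ref{thm:Lqlocal}, the Calder\'on--Zygmund step and the interpolation inequalities all carry a fixed constant --- and then to invoke the parabolic scaling $\bfv(t,x)\mapsto\bfv(\lambda^2 t,\lambda x)$, $\bff\mapsto\lambda^2\bff(\lambda^2 t,\lambda x)$, under which each of $\bfv/T$, $\nabla\bfv/\sqrt T$, $\nabla^2\bfv$, $\pi/\sqrt T$, $\nabla\pi$ carries exactly the same scaling weight as $\bff$. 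What then has to be checked --- and this is where the real work sits --- is that the $T=1$ constant does not deteriorate when the underlying ball is rescaled, i.e.\ that the estimate of Theorem~\ref{thm:Lqlocal} is available with a constant independent of the ball as well; its proof, summing the local comparison estimates with finite overlap rather than over a $T$-dependent number of cubes, delivers this.
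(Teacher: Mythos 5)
Your proposal is correct in substance and ends up with the same scaling mechanism for a) and d) as the paper, but it reverses the logical order of b) and c), and this is a genuinely different route. The paper first proves $\partial_t\bfv\in L^q_{loc}$ by a duality argument: a test function $\bfphi$ is split as $\bfphi=\bfphi_{\Div}+\nabla\Delta_G^{-1}\Div\bfphi$, the gradient part drops out against $\Div\bfv=0$, and the continuity of $\nabla\Delta_G^{-1}\Div$ on $L^{q'}(G)$ together with the $\nabla^2\bfv$-bound of Theorem \ref{thm:Lqlocal} gives $\|\partial_t\bfv\|_{L^q}\leq c\|\bff\|_{L^q(Q_0)}$ directly; only then is $\pi$ introduced by De Rham, and $\nabla\pi$ is estimated for free from the pointwise identity $\nabla\pi=\partial_t\bfv-\Div\mathcal A(\ep(\bfv))+\bff$. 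You instead construct $\pi$ first, derive $\Delta\pi=\Div\bff-\Div\Div\mathcal A(\ep(\bfv))$, obtain $\nabla\pi\in L^q_{loc}$ by elliptic Calder\'on--Zygmund theory, and then read off $\partial_t\bfv$ from the equation. Both orders are legitimate, and your explicit treatment of the lower-order terms in a) (fundamental theorem of calculus in time for $\bfv/T$, Gagliardo--Nirenberg for $\nabla\bfv/\sqrt{T}$) is actually more complete than the paper, which disposes of a) with the bare phrase ``simple scaling argument''.

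The one step where your order costs you something is the quantitative bound $\int_0^T\int_B|\nabla\pi|^q\dxt\leq c\int_{Q_0}|\bff|^q\dxt$. The interior Calder\'on--Zygmund estimate for $\Delta\pi=\Div g$ carries a lower-order term in $\pi$ (or $\nabla\pi$) on the larger ball, and you propose to absorb it ``through the interior $L^2$-bound''. For $q>2$ this does not close directly: the $L^2$-theory controls that term by $\|\bff\|_{L^2(Q_0)}$, and H\"older goes the wrong way on the spatially unbounded cylinder $Q_0=(0,T)\times\R^d$, so $\|\bff\|_{L^2(Q_0)}$ is not dominated by $\|\bff\|_{L^q(Q_0)}$. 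You would need either a covering/self-improvement argument for the local pressure estimate or a normalisation that removes the lower-order term. The paper's ordering sidesteps this entirely, because by the time $\pi$ appears, $\partial_t\bfv$ and $\nabla^2\bfv$ are already bounded in $L^q$ by $\|\bff\|_{L^q(Q_0)}$ and $\nabla\pi$ inherits the bound from the equation with no elliptic estimate needed. Your closing remark about needing the $T=1$ constants to be uniform over balls before scaling is well taken and applies to both arguments.
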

\begin{proof}
The estimate in a) is a simple scaling argument. Having a solution $\bfv$ defined on $(0,T)\times \R^d$ we gain a solution $\widehat{\bfv}$ on $(0,1)\times\R^d$ by setting
\begin{align*}
\widehat{\bfv}(s,x):=\frac{1}{T}\bfv(Ts,\sqrt{T}x).
\end{align*}
Now we apply Theorem \ref{thm:Lqlocal} to $\widehat{\bfv}$. The constant which appears is independent of $T$. Transforming back to $\bfv$ yields the claimed inequality.\\
b) For $\bfvarphi\in C^\infty_0(Q_0)$ with $\bfphi(t,x)=\tau(t)\bfpsi(x)$ where $\bfpsi\in C^\infty_0(G)$ ($G\subset\R^d$ a bounded Lipschitz domain) we have
\begin{align*}
\int_{Q_0} \bfv\cdot\partial_t\bfphi\dxt&=\int_0^T\partial_t\tau\int_{\R^d} \bfv\cdot\big(\bfpsi_{\Div}+\nabla\bfPsi\big)\dxt\\
&=\int_0^T\partial_t\tau\int_{\R^d} \bfv\cdot\bfpsi_{\Div}\dxt\\
&=\int_{Q_0} \bfv\cdot\partial_t\bfphi_{\Div}\dxt
\end{align*}
where $\bfpsi_{\Div}:=\bfpsi-\nabla\Delta^{-1}_G\Div\bfpsi$ and $\bfPsi:=\Delta^{-1}_G\Div\bfpsi$.\footnote{Here $\Delta_G^{-1}$ is the solution operator to the Laplace equation with zero boundary datum on $\partial G$.} Here we took into account $\Psi|_{\partial G}=0$ as well as $\Div\bfv=0$. Using $\nabla^2\bfv\in L^2(0,T;L^2_{loc}(\R^d))$ we proceed by%\marginpar{regularity $\Rightarrow$ no zero boundary values needed for test}
\begin{align*}
\int_{Q_0} \bfv\cdot\partial_t\bfphi\dxt
&=\int_0^T\int_{G} \big(\bff-\Div\mathcal A \ep(\bfv)\big)\cdot\bfphi_{\Div}\dxt\\
&\leq\,c\,\bigg(\int_0^T\int_G \big(|\nabla^2 \bfv|^q+|\bff|^q\big)\dxt\bigg)^{\frac{1}{q}}\bigg(\int_0^T\int_G |\bfphi_{\Div}|^{q'}\dxt\bigg)^{\frac{1}{q'}}\\
&\leq\,c\,\bigg(\int_{Q_0} |\bff|^q\dxt\bigg)^{\frac{1}{q}}\bigg(\int_0^T\int_G |\bfphi|^{q'}\dxt\bigg)^{\frac{1}{q'}}.
\end{align*}
In the last step we used the estimate from Theorem \ref{thm:Lqlocal} and continuity of $\nabla\Delta_G^{-1}\Div$ on $L^{q'}(G)$. Duality implies $\partial_t\bfv\in L^q(0,T;L^q_{loc}(\R^d))$ and we can introduce the pressure function $\pi\in L^q(0,T;L^q_{loc}(\R^d))$ as claimed in b) by De Rham's Theorem. Using the equation for $\bfv$ and the estimates in a) and b) we gain
\begin{align*}
\int_{Q}|\nabla\pi|^q\dxt\,\leq \,c\,\int_{Q_0}|\bff|^q\dxt.
\end{align*}
The estimate for $\pi$ in d) follows again by scaling.
\end{proof}

\begin{corollary}
\label{cor:Lqlocal+}
Let $\bff\in L^q(Q_0^+)$ for some $q>2$ where $Q_0^+:=(0,T)\times\R^d_+$, $\R^d_+=\R^d\cap[x_d>0]$ and suppose that \eqref{eq:ell} holds. Let $\bfv\in L^1((0,T);\LD_{\Div}(\R^d_+))$ with $\bfv|_{x_d=0}=0$ be a weak solution to
\begin{align}
\int_{Q_0^+} \bfv\cdot\partial_t\bfphi\dxt-\int_{Q_0^+} \mathcal A(\ep(\bfv),\ep(\bfphi))\dx=\int_{Q_0^+}\bff\cdot\bfphi\dxt
\end{align}
for all $\bfphi\in C^{\infty}_{0,\Div}([0,T)\times \R^d_+)$.
Then the results from Theorem \ref{thm:Lqlocal} and Corollary \ref{cor:Lqlocal} hold for $\bfv$ for all half balls $B^+(z)\subset\R^d$ with $z_d=0$.\footnote{$B^+(z):=B(z)\cap[x_d>0]$}
\end{corollary}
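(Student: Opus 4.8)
The plan is to repeat the proof of Theorem~\ref{thm:Lqlocal} line by line, adding to its case distinction i)--iii) a fourth type of parabolic cube: one whose fourfold dilate meets the flat boundary $(0,T)\times\{x_d=0\}$. First I would extend $\bff$ and $\nabla^2\bfv$ — the latter lying in $L^2_{loc}$ by the standard interior and flat-boundary $L^2$-theory — by zero across $\{x_d=0\}$, so that the cylinder maximal operator $\mathcal M$ and the continuity bound (\ref{eq:Mascont}) are available exactly as in the interior proof. For a parabolic cube $Q_1=Q_1(t_0,z_0)$ I would then distinguish: (a) $4Q_1\subset Q_0^+$, treated as step i); (b) $4Q_1$ meets $(-\infty,0]\times\R^d_+$ or $[T,\infty)\times\R^d_+$ but not $\{x_d=0\}$, treated as steps ii)--iii); and (c) $4Q_1\cap\big((0,T)\times\{x_d=0\}\big)\neq\emptyset$, the new case. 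Choosing the side length small one arranges that (b) and (c) never occur for the same cube. Once the level-set implication (\ref{imp1}) is established for cubes of type (c), the maximal-function iteration of step i) runs verbatim and gives $\nabla^2\bfv\in L^q(Q_1^+)$ with the bound of Theorem~\ref{thm:Lqlocal}; a covering argument then yields that theorem on the half space, and Corollary~\ref{cor:Lqlocal} follows by the same scaling, duality and De Rham arguments as before, the pressure now being defined up to the flat boundary.

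For a cube of type (c) I would normalise $z_0\in\{x_d=0\}$ and $r=1$, write $B_m^+:=B_m(z_0)\cap\R^d_+$, $\tilde I_m:=I_m\cap(0,T)$, $\tilde Q_m^+:=\tilde I_m\times B_m^+$, and compare $\bfv$ on $\tilde Q_4^+$ with the unique solution $\bfh$ of
\begin{align*}
\partial_t\bfh-\Div\mathcal A(\ep(\bfh))=\nabla\pi_{\bfh},\qquad\Div\bfh=0\qquad\text{in }\tilde Q_4^+,
\end{align*}
satisfying $\bfh=\bfv$ on $\tilde I_4\times(\partial B_4\cap\R^d_+)$, $\bfh=0$ on $\tilde I_4\times(B_4\cap\{x_d=0\})$, and $\bfh=\bfv$ (resp.\ $\bfh=0$ if that time is $0$, recall $\bfv(0,\cdot)=0$) on the bottom slice. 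Testing the difference of the two problems with $\bfv-\bfh$, which is admissible since $\bfv-\bfh$ has vanishing trace on all of $\partial\tilde Q_4^+$ except the top, and arguing as for (\ref{eq:cacc1}) by ellipticity, Korn and Gronwall yields
\begin{align*}
\sup_{t\in\tilde I_4}\int_{B_4^+}|\bfv(t)-\bfh(t)|^2\dx+\int_{\tilde Q_4^+}|\nabla\bfv-\nabla\bfh|^2\dxt\leq c\int_{\tilde Q_4^+}|\bff|^2\dxt .
\end{align*}

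Two ingredients then replace their interior analogues. The first is a half-space Caccioppoli estimate, the analogue of (\ref{eq:cacc2}) on $\tilde Q_3^+$, bounding $\int_{\tilde Q_3^+}|\nabla^2\bfv-\nabla^2\bfh|^2\dxt$ by $c\int_{\tilde Q_4^+}|\bff|^2\dxt$; I would obtain it not from the interior test function $\curl(\eta^2\curl(\bfv-\bfh))$ — which, with $\eta$ cut off only near the curved part $\partial B_4\cap\R^d_+$, fails to vanish on $\{x_d=0\}$ and is therefore inadmissible — but by running the energy estimate with difference quotients in the directions tangential to $\{x_d=0\}$, which preserve the no-slip condition, thereby controlling every second derivative except $\partial_d^2$, and then recovering $\partial_d^2(\bfv-\bfh)$ from the equation together with $\Div(\bfv-\bfh)=0$; this is precisely the mechanism by which the stationary bound of Lemma~\ref{lems:Aq} is proved near a flat boundary. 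The second ingredient is the up-to-the-boundary bound $N_0^2:=\sup_{\tilde Q_3^+}|\nabla^2\bfh|^2<\infty$: this is the classical boundary regularity of the non-stationary Stokes system with no-slip data (see \cite{So},\cite{Gi} for $\mathcal A=I$), and for a constant elliptic $\mathcal A$ it follows from Lemma~\ref{lems:Aq} together with the tangential difference-quotient bootstrap underlying \cite{AmGi}; no corner singularity interferes because $\overline{B_3^+}$ stays at positive distance from the singular set $\partial B_4\cap\{x_d=0\}$ of $B_4^+$.

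With these at hand, the inclusion of super-level sets (\ref{eq:3.34}) and the choice $N_1^2:=\max\{4N_0^2,2^{d+2}\}$ carry over word for word — all cylinders there being now intersected with $(0,T)\times\R^d_+$, which only shrinks their measures — and, combined with the $L^2$-continuity of $\mathcal M$, yield (\ref{imp1}) for $Q_1$, so the iteration closes and $\nabla^2\bfv\in L^q(Q_1^+)$ with the asserted bound. The hard part will be exactly the two boundary ingredients above: the second-order Caccioppoli estimate valid up to the flat boundary (where the interior $\curl$-test function ceases to be admissible) and the up-to-the-boundary $C^2$-regularity of the $\mathcal A$-Stokes comparison solution; the rest is a transcription of the interior argument with parabolic cylinders replaced by their intersections with the half space.
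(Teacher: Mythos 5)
Your proposal follows essentially the same route as the paper: comparison with a half-space $\mathcal A$-Stokes solution $\bfh^+$, replacement of the inadmissible $\operatorname{curl}$ test function by tangential derivatives (the paper inserts $\partial_\gamma(\eta^2\partial_\gamma(\bfv-\bfh))$ for $\gamma\in\{1,\dots,d-1\}$, the rigorous counterpart of your tangential difference quotients), recovery of $\partial_d^2(\bfv-\bfh)$ from the equation and $\Div(\bfv-\bfh)=0$, boundedness of $\nabla^2\bfh^+$ up to the flat boundary, and the unchanged level-set iteration. The only steps you leave implicit --- and which the paper carries out explicitly because they are needed to make "recover $\partial_d^2$ from the equation" quantitative --- are the preliminary test with $\partial_t(\bfv-\bfh^+)$ and the $\Bog$-based $L^2$ bounds on the pressure difference $\pi_{\bfv}-\pi_{\bfh^+}$ and its tangential derivatives.
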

\begin{proof}
We will show a variant of the $L^q$-estimate from Theorem \ref{thm:Lqlocal} on half balls $B^+$, i.e.
\begin{align}\label{eq:B+}
\int_0^T\int_{B^+}|\nabla^2\bfv|^q\dxt\leq \,c_B\,\int_0^T\int_{Q^+}|\bff|^q\dxt.
\end{align}
From this we can follow estimates in the fashion of Corollary \ref{cor:Lqlocal} as done there. In order to establish (\ref{eq:B+}) we will proceed as in the proof of
 Theorem \ref{thm:Lqlocal} replacing all balls with half ball. So let $Q_1\subset\R^{d+1}$ such that $4Q_1\subset Q_0$ (the other situation can be shown along the modifications indicated at the end of the proof of Theorem \ref{thm:Lqlocal}). Moreover, assume that $Q_1=I_1\times B_1(z)$ where $z_d=0$. We compare $\bfv$ with the unique solution $\bfh^+$ to
\begin{align}
  \left\{\begin{array}{rlc}
     \partial_t\bfh^+- \Div\mathcal A(\ep(\bfh^+)) &=\nabla \pi_{\bfh^+}& \mbox{in $Q_4^+$,}\\
      \Div \bfh^+&=0\qquad& \mbox{in $Q_4^+$,}\\
      \bfh^+&=\bfv\qquad\quad& \mbox{ \,on
        $I_4\times\partial B_4^+$,}\\
\bfh^+(0,\cdot)&=\bfv(0,\cdot)\qquad\quad& \mbox{ \,in
        $B_4^+$.}
\end{array}\right.\label{eq:h+}
\end{align}
We gain a version of the estimate \eqref{eq:cacc1} and \eqref{eq:cacc1t} on half-balls. In fact, there holds
\begin{align}\label{eq:cacc1+}
\sup_{t\in I_4}\int_{B_4^+}|\bfv-\bfh^+|^2\dx+\int_{Q_4^+}|\nabla\bfv-\nabla\bfh^+|^2\dxt\leq \,c\,\int_{Q_4^+}|\bff|^2\dxt,\\
\label{eq:cacc1+t}
\int_{Q_4^+}|\partial_t(\bfv-\bfh^+)|^2\dx+\sup_{t\in I_4}\int_{B_4^+}|\nabla\bfv-\nabla\bfh^+|^2\dx\leq \,c\,\int_{Q_4^+}|\bff|^2\dxt.
\end{align}
We can introduce the pressure terms $\pi_\bfv,\pi_{\bfh^+}\in L^2(I_4,L^2_0(B_4^+))$ in the equations for $\bfv$ and $\bfh^+$ and show
\begin{align}\label{eq:cacc1+pi}
\int_{Q_4^+}|\pi_\bfv-\pi_{\bfh^+}|^2\dx\leq \,c\,\int_{Q_4^+}|\bff|^2\dxt.
\end{align}
This can be done as in the proof of \eqref{eq:cacc1pi} using the \Bogovskii-operator on $B_4^+$.
Estimate \eqref{eq:cacc1+pi} can be shown by using the \Bogovskii-operator
introduced in \cite{Bog}.
Now we insert $\partial_\gamma(\eta^2\partial_\gamma(\bfv-\bfh))$ for $\gamma\in\set{1,...,d-1}$ in the equation for $\bfv-\bfh$. Here we choose $\eta\in C^\infty_0(B_4)$ with $0\leq\eta\leq1$ and $\eta\equiv1$ on $B_3$. This yields together with \eqref{eq:cacc1+}-\eqref{eq:cacc1+pi}
\begin{align}\label{eq:cacc2+}
\int_{Q_3^+}|\tilde{\nabla}\nabla(\bfv-\bfh)|^2\dx\leq \,c\,\int_{Q_4^+}|\bff|^2\dxt,
\end{align}
where $\tilde{\nabla}:=(\partial_1,...,\partial_{d-1})$.
Finally, the only term which is missing is $\partial_d^2(\bfv-\bfh)$. On account of $\Div(\bfv-\bfh)=0$ we gain (cf. \cite{BKR})
\begin{align}\label{eq:dd}
|\partial_d^2(\bfv-\bfh)|&\leq \,c\,\big(|\tilde{\nabla}(\pi_\bfv-\pi_{\bfh^+})|+|\tilde{\nabla}\nabla(\bfv-\bfh)|+|\bff|\big).
\end{align}
So we have to estimate derivatives of the pressure. In fact we have
\begin{align}\label{eq:cacc1+dpi}
\int_{Q_3^+}|\tilde{\nabla}(\pi_\bfv-\pi_{\bfh^+})|^2\dx\leq \,c\,\int_{Q_4^+}|\bff|^2\dxt.
\end{align}
We can show this similarly to the proof of \eqref{eq:cacc1+pi} replacing $\varphi$ by $\partial_\gamma\varphi$ and using \eqref{eq:cacc2+}. 
Combining (\ref{eq:cacc2+})- (\ref{eq:cacc1+dpi}) implies
\begin{align*}%\label{eq:cacc2+}
\int_{Q_3^+}|\nabla^2\bfv-\nabla^2\bfh|^2\dx\leq \,c\,\int_{Q_4^+}|\bff|^2\dxt.
\end{align*}
 Moreover, we know $\sup_{Q^+_3}|\nabla^2\bfh^+|^2<\infty$. Note that $\bfh^+=0$ on $Q_3\cap [x_d=0]$. This allows to show $\nabla^2\bfv\in L^q(Q^+_1)$ as in the proof of Theorem \ref{thm:Lqlocal}.  
\end{proof}

\begin{them}
\label{thm:Lqglobal}
Let  $Q:=(0,T)\times\Omega$ with a bounded domain $\Omega\subset\R^d$ having a $C^2$-boundary.
Let $\bff\in L^q(Q)$ for some $q>2$ and suppose that \eqref{eq:ell} holds. Then there is a unique weak solution $\bfv\in L^\infty(0,T;L^2(B))\cap L^q(0,T;W^{1,q}_{0,\Div}(\Omega))$ to
\begin{align}
\int_{Q} \bfv\cdot\partial_t\bfphi\dxt-\int_{Q} \mathcal A(\ep(\bfv),\ep(\bfphi))\dx=\int_{Q}\bff\cdot\bfphi\dxt
\end{align}
for all $\bfphi\in C^{\infty}_{0,\Div}([0,T) \times \Omega)$ such that $\nabla^2\bfv\in L^q(Q)$. Moreover, we have
\begin{align*}
\int_{Q}|\nabla^2\bfv|^q\dxt\leq \,c\,\int_{Q}|\bff|^q\dxt.
\end{align*}
\end{them}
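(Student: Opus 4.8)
The plan is to obtain Theorem \ref{thm:Lqglobal} by combining the interior and boundary $L^q$-estimates already established (Theorem \ref{thm:Lqlocal} and Corollary \ref{cor:Lqlocal+}) with a flattening-the-boundary argument and a covering of $\overline\Omega$, together with a standard Galerkin construction for existence. First I would establish existence and the basic energy estimate: since $\bff\in L^q(Q)\subset L^2(Q)$ (as $q>2$ and $Q$ is bounded), a Galerkin approximation in the space of solenoidal fields vanishing on $\partial\Omega$, followed by testing with $\bfv$ itself and using the ellipticity of $\mathcal A$ together with Korn's inequality and Gronwall's lemma, yields a unique weak solution $\bfv\in L^\infty(0,T;L^2(\Omega))\cap L^2(0,T;W^{1,2}_{0,\Div}(\Omega))$. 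Uniqueness follows by testing the difference of two solutions with the difference itself.

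Next comes the regularity statement, which is the heart of the proof. Since $\partial\Omega$ is $C^2$, I would fix a finite cover of $\overline\Omega$ by balls: interior balls $B_j$ with $2B_j\subset\Omega$, and boundary balls $B_k$ centred at $\partial\Omega$ on which the boundary can be straightened by a $C^2$-diffeomorphism $\Phi_k$. On the interior balls the estimate $\int_0^T\int_{B_j}|\nabla^2\bfv|^q\dxt\leq c\int_Q|\bff|^q\dxt$ follows directly from Theorem \ref{thm:Lqlocal} applied after multiplying by a cut-off (note that cutting off destroys the divergence-free condition, so one must correct by a Bogovski\u\i\ operator, reducing to the situation of Corollary \ref{cor:Lqlocal}~c)–d) where a pressure is present — alternatively one appeals to Corollary \ref{cor:Lqlocal} in the form valid for general balls). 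On the boundary balls, after the change of variables $\Phi_k$ the transformed tensor $\mathcal A_k$ is still elliptic (with constants depending on $\|\Phi_k\|_{C^2}$, hence uniformly controlled by finitely many $k$), the transformed divergence constraint $\Div\bfv=0$ becomes $\Div(J_k\bfv)=0$ for the Jacobian matrix $J_k$, and the zero boundary condition is preserved; one is then in the half-ball setting, and Corollary \ref{cor:Lqlocal+} (in the version valid after absorbing the lower-order terms coming from the non-constant coefficients of $\mathcal A_k$ and from the $J_k$-twisted divergence condition into the right-hand side) gives $\int_0^T\int_{B_k^+}|\nabla^2\bfv|^q\dxt\leq c\int_Q|\bff|^q\dxt$.

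Summing the finitely many local estimates and using that $\nabla^2\bfv\in L^2_{\rm loc}$ globally (so the estimates can be bootstrapped: one first knows $\nabla^2\bfv\in L^2(Q)$ from the straightened-boundary $L^2$-theory, then improves to $L^q$) yields $\int_Q|\nabla^2\bfv|^q\dxt\leq c\int_Q|\bff|^q\dxt$; that $\bfv\in L^q(0,T;W^{1,q}_{0,\Div}(\Omega))$ then follows from Poincar\'e's and Korn's inequalities together with $\bfv|_{\partial\Omega}=0$. The main obstacle I expect is the bookkeeping of the lower-order terms generated by flattening the boundary: the frozen-coefficient operator $\mathcal A_k$ at the centre of $B_k$ is constant and elliptic, but the difference $\mathcal A_k(x)-\mathcal A_k(x_0)$ and the commutator terms from $\Div(J_k\bfv)=0$ produce first-order contributions that must be absorbed; this requires taking the balls small enough that the relevant coefficient oscillations are below the smallness threshold $\delta$ of the perturbation argument inside Theorem \ref{thm:Lqlocal}, exactly as in the Caccioppoli-type comparison there, and then a finite iteration over the (finitely many) balls. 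The second, more routine, nuisance is that the cut-off functions break $\Div\bfv=0$, forcing the Bogovski\u\i\ correction and hence the pressure; but Corollary \ref{cor:Lqlocal}~c)–d) and its half-space analogue in Corollary \ref{cor:Lqlocal+} have been arranged precisely to accommodate this.
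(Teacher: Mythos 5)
Your plan uses exactly the ingredients the paper relies on: the published proof of Theorem \ref{thm:Lqglobal} is a single sentence asserting that, given the whole-space and half-space local theory (Theorem \ref{thm:Lqlocal}, Corollaries \ref{cor:Lqlocal} and \ref{cor:Lqlocal+}) and the stationary estimates with prescribed divergence (Lemma \ref{lems:Aq}), everything "follows exactly as in \cite{So}, Thm.~4.1" --- i.e.\ precisely the covering of $\overline\Omega$ by interior and boundary patches, the flattening of the boundary by $C^2$-charts, and the divergence correction that you describe. Where you genuinely diverge is in how the local pieces are glued. Solonnikov's argument (the one the paper intends) is a parametrix construction at the operator level: one defines $\mathscr R\bff$ by extending or flattening $\bff$, solving the whole-space and half-space problems, localizing back, and repairing the divergence with the stationary solution operator of Lemma \ref{lems:Aq}; one then shows $\mathscr L\mathscr R=I+\tau$ with $\|\tau\|<1$ (after taking $T$ and the gradients of the boundary charts small, followed by iteration in time), so that $\mathscr L$ is onto and existence of the regular solution and the estimate come simultaneously. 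You instead build the weak solution first and localize the a priori estimate on it. That is a legitimate alternative, but it is the more delicate route, and your sketch underestimates the bootstrap: the commutator terms $\nabla\zeta\,\nabla\bfv$, $\bfv\,\Delta\zeta$ and $\pi\,\nabla\zeta$ on the right-hand side of the localized equation are a priori only in $L^2$, so the passage from $\nabla^2\bfv\in L^2$ to $L^q$ is not one step but a finite chain $2=q_0<q_1<\dots<q$ driven by parabolic embedding, with the localized pressure (Corollary \ref{cor:Lqlocal}~c), d)) controlled at each intermediate exponent; moreover the $\varepsilon\,\|\nabla^2\bfv\|_{q_j}$ contributions produced by interpolating the first-order commutators can only be absorbed once finiteness of that norm at level $q_j$ is already known, which is exactly what the iteration must supply. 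The parametrix route sidesteps this qualitative bootstrap entirely, which is presumably why the paper defers to \cite{So}. With that caveat made explicit, your plan is sound and establishes the same statement.
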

\begin{proof}
Due to the local $L^q$-theory for the whole space problem and the half-space problem which follow from Corollary \ref{cor:Lqlocal} and  Corollary \ref{cor:Lqlocal+}  (with the right scaling in $T$) the proof follows exactly as in \cite{So}, Thm. 4.1, in the case $\mathcal A=I$. Note that $L^q$-estimates for the stationary problem on bounded domains with given divergence are stated in Lemma \ref{lems:Aq}. 
\end{proof}

In order to treat problems with right hand side in divergence form
we consider the $\mathcal A$-Stokes operator 
$$\mathscr A_q:=-\mathcal P_q\Div\mathcal A\big(\ep(\cdot)\big),$$
where $\mathcal P_q$ is the Helmholtz projection from $L^q(B)$ into $L^q_{\Div}(B)$. The latter one is defined by 
\begin{align*}
L^q_{\Div}(B):=\overline{C^\infty_{0,\Div}(B)}^{\|\cdot\|_q}.
\end{align*}
The Helmholtz-projection $\mathcal P_q \bfu$ of a function $\bfu\in L^q(B)$ can be defined as $\mathcal P_q \bfu:=\bfu-\nabla h$, where $h$ is the solution to the Neumann-problem
\begin{align*}
\begin{cases}\Delta h=\Div \bfu\quad\text{on}\quad B,\\
\mathcal N_B\cdot(\nabla h-\bfu)=0\quad\text{on}\quad\partial B.
\end{cases}
\end{align*}
The $\mathcal A$-Stokes operator $\mathscr A_q$ enjoys the same properties than the Stokes operator $\mathbb A_q$ (see for instance \cite{GaSiSo}). 

For the $\mathcal A$-Stokes operator it holds $\mathcal D (\mathscr A_q)=W^{1,q}_{0,\Div}\cap W^{2,q}(\Omega)$ and
\begin{align}\label{eq:socont1}
&\|\bfu\|_{2,q}\leq c_1 \|\mathscr A_q\bfu\|_{q}\leq c_2\|\bfu\|_{2,q},\quad \bfu\in\mathcal D(\mathscr A_q),\\
&\int_\Omega \mathscr A_q \bfu\cdot\bfw\dx=\int_\Omega \bfu\cdot \mathscr A_{q'} \bfw\dx\quad\bfu\in D(\mathscr A_q),\,\,\bfw\in D(\mathscr A_{q'}).\label{eq:socont2}
\end{align}
Inequality \eqref{eq:socont1} is a consequence of Lemma \ref{lems:Aq} a) and the continuity of $\mathcal P_q$.\\
Since $\mathscr A_q$ is positive its root $\mathscr A_q^{\frac{1}{2}}$ is well-defined with $\mathcal D (\mathscr A_q^{\frac{1}{2}})=W^{1,q}_{0,\Div}(\Omega)$ and
\begin{align}\label{eq:sorootcont1}
&\|\bfu\|_{1,q}\leq c_1\|\mathscr A^{\frac{1}{2}}_q\bfu\|_{q}\leq c_2\|\bfu\|_{1,q},\quad \bfu\in\mathcal D(\mathscr A^{\frac{1}{2}}_q),\\
&\int_\Omega \mathscr A^{\frac{1}{2}}_q \bfu\cdot\bfw\dx=\int_\Omega \bfu\cdot \mathscr A^{\frac{1}{2}}_{q'} \bfw\dx\quad\bfu\in D(\mathscr A^{\frac{1}{2}}_q),\,\,\bfw\in D(\mathscr A_{q'}^{\frac{1}{2}}).\label{eq:sorootcont2}
\end{align}
Finally, the inverse operator $\mathscr A_q^{-\frac{1}{2}}:L^q_{\Div}(\Omega)\rightarrow W^{1,q}_{0,\Div}(\Omega)$ is defined and it holds
\begin{align}\label{eq:ainv1}
&\|\nabla\mathscr A^{-\frac{1}{2}}_q\bfu\|_{q}\leq c\|\bfu\|_{q},\quad \bfu\in\mathcal D(\mathscr A^{-\frac{1}{2}}_q),\\
&\int_\Omega \mathscr A^{-\frac{1}{2}}_q \bfu\cdot\bfw\dx=\int_\Omega \bfu\cdot \mathscr A^{-\frac{1}{2}}_{q'} \bfw\dx\quad\bfu\in D(\mathscr A^{-\frac{1}{2}}_q),\,\,\bfw\in D(\mathscr A_{q'}^{-\frac{1}{2}}).\label{eq:ainv2}
\end{align}
From the definition of the square root of an positive self-adjoint operator follows also that
\begin{align*}
\mathscr A_{q'}^{\frac{1}{2}}&:W^{2,q'}\cap W^{1,q'}_{0,\Div}(\Omega)\rightarrow W^{1,q'}_{0,\Div}(\Omega),\\
\mathscr A_{q'}^{-\frac{1}{2}}&:W^{1,q'}_{0,\Div}(\Omega)\rightarrow W^{2,q'}\cap W^{1,q'}_{0,\Div}(\Omega),
\end{align*}
 together with
\begin{align}\label{eq:a1/2}
&\|\nabla\mathscr A^{\frac{1}{2}}_q\bfu\|_{q}\leq c\|\bfu\|_{2,q},\quad \bfu\in W^{2,q'}\cap W^{1,q'}_{0,\Div}(\Omega),\\
&\|\nabla\mathscr A^{-\frac{1}{2}}_q\bfu\|_{q}\leq c\|\bfu\|_{q},\quad \bfu\in W^{1,q'}_{0,\Div}(\Omega).\label{eq:a-1/2}
\end{align}

Finally we state the main result of this section.
\begin{them}
  \label{lem:reg2}
Let  $Q:=(0,T)\times\Omega$ with a bounded domain $\Omega\subset\R^d$ having a $C^2$-boundary and suppose that \eqref{eq:ell} holds.
Let $\bfF\in L^q(Q)$, where $q\in(1,\infty)$. There is a unique solution $\bfw \in L^q(0,T;W^{1,q}_{0,\Div}(\Omega))$ to
  \begin{align}
    \label{eq:vH''}
    \begin{aligned}
   \int_Q \bfw\cdot \partial_t\bfphi\dxt  -&\int_Q \mathcal{A}(\ep(\bfw),\ep(\bfphi)) \dxt
   = \int_Q \bfF:\nabla\bfphi
    \end{aligned}
  \end{align}
  for all $\bfphi \in C^\infty_{0,\Div}([0,T)\times \Omega)$. Moreover we have
\begin{align*}
\dashint_{Q}|\nabla\bfw|^q\dxt\leq \,c\,\dashint_{Q}|\bfF|^q\dxt,
  \end{align*}
  where $c$ only depends on~$\mathcal{A}$ and $q$.
\end{them}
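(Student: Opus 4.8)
\noindent The plan is to reduce the divergence-form right-hand side to an ordinary force in $L^q(Q)$, to solve the resulting problem by Theorem~\ref{thm:Lqglobal}, and then to recover $\bfw$ by applying the square root $\mathscr A_q^{\frac12}$ of the $\mathcal A$-Stokes operator to that solution. First I would use Lemma~\ref{lems:Aq}~b) (with $g=0$), for a.e.\ $t\in(0,T)$, to get the unique $\bfG(t)\in W^{1,q}_{0,\Div}(\Omega)$ solving the \emph{stationary} $\mathcal A$-Stokes problem with datum $\bfF(t)$ in divergence form, i.e.\ $\int_\Omega\mathcal A(\ep(\bfG(t)),\ep(\bfpsi))\dx=\int_\Omega\bfF(t):\nabla\bfpsi\dx$ for all $\bfpsi\in C^\infty_{0,\Div}(\Omega)$, together with $\dashint_\Omega|\nabla\bfG(t)|^q\dx\le c\,\dashint_\Omega|\bfF(t)|^q\dx$. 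Since the stationary solution operator is linear and bounded from $L^q(\Omega)$ into $W^{1,q}_{0,\Div}(\Omega)$ with a constant independent of $t$, this gives $\bfG\in L^q(0,T;W^{1,q}_{0,\Div}(\Omega))$, strong measurability being inherited from that of $\bfF$. Now set $\bff:=\mathscr A_q^{\frac12}\bfG$. Using $W^{1,q}_{0,\Div}(\Omega)=\mathcal D(\mathscr A_q^{\frac12})$ and the bound $\|\mathscr A_q^{\frac12}\bfG(t)\|_q\le c\,\|\bfG(t)\|_{1,q}$ from (\ref{eq:sorootcont1}), one obtains $\bff\in L^q(0,T;L^q_{\Div}(\Omega))$ with $\|\bff\|_{L^q(Q)}\le c\,\|\bfF\|_{L^q(Q)}$.

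Next, Theorem~\ref{thm:Lqglobal} applied to the force $\bff$ yields the unique weak solution $\bfu$ of $\partial_t\bfu-\Div\mathcal A(\ep(\bfu))=\bff$ (in the pressure-free form) with $\bfu(0,\cdot)=0$, satisfying $\bfu\in L^q(0,T;W^{2,q}\cap W^{1,q}_{0,\Div}(\Omega))$, $\partial_t\bfu\in L^q(Q)$ (this follows from the equation and (\ref{eq:socont1})) and $\|\nabla^2\bfu\|_{L^q(Q)}\le c\,\|\bff\|_{L^q(Q)}$. (For $q\le2$ the conclusion of Theorem~\ref{thm:Lqglobal} follows by a duality argument from the case $q>2$ applied to the transposed tensor $\mathcal A^T$, respectively classically when $q=2$; equivalently, one may appeal directly to the maximal $L^q$-regularity of $\mathscr A_q$, which shares the properties of the usual Stokes operator.) Since $\bfu(t)\in\mathcal D(\mathscr A_q)$ for a.e.\ $t$, the function $\bfw:=\mathscr A_q^{\frac12}\bfu$ is well defined, lies in $W^{1,q}_{0,\Div}(\Omega)$ for a.e.\ $t$, and by (\ref{eq:a1/2}) together with Poincar\'{e}'s inequality $\|\nabla\bfw\|_{L^q(Q)}\le c\,\|\bfu\|_{L^q(0,T;W^{2,q})}\le c\,\|\nabla^2\bfu\|_{L^q(Q)}\le c\,\|\bff\|_{L^q(Q)}\le c\,\|\bfF\|_{L^q(Q)}$, which is the claimed estimate; in particular $\bfw\in L^q(0,T;W^{1,q}_{0,\Div}(\Omega))$.

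It remains to check that this $\bfw$ solves (\ref{eq:vH''}). Given a test function $\bfphi\in C^\infty_{0,\Div}([0,T)\times\Omega)$, I would insert $\bfpsi:=\mathscr A_{q'}^{\frac12}\bfphi$ into the weak formulation of the $\bfu$-equation; this is admissible since $\bfpsi\in L^{q'}(0,T;W^{1,q'}_{0,\Div}(\Omega))$ with $\partial_t\bfpsi=\mathscr A_{q'}^{\frac12}\partial_t\bfphi\in L^{q'}(Q)$ and $\bfpsi(T,\cdot)=0$, so that the formulation extends to $\bfpsi$ by density (using $\bfu,\partial_t\bfu\in L^q(Q)$). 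By the self-adjointness relation (\ref{eq:sorootcont2}) the term $\int_Q\bfu\cdot\partial_t\bfpsi\dxt$ equals $\int_Q(\mathscr A_q^{\frac12}\bfu)\cdot\partial_t\bfphi\dxt=\int_Q\bfw\cdot\partial_t\bfphi\dxt$. The identity $\int_\Omega\mathcal A(\ep(\bfa),\ep(\bfb))\dx=\int_\Omega(\mathscr A_q^{\frac12}\bfa)\cdot(\mathscr A_{q'}^{\frac12}\bfb)\dx$ --- valid for $\bfa\in\mathcal D(\mathscr A_q)$ and $\bfb\in W^{1,q'}_{0,\Div}(\Omega)$ by the definition of $\mathscr A_q$ and the properties of the Helmholtz projection, and extended to all $\bfa\in W^{1,q}_{0,\Div}(\Omega)$ by density --- then converts $\int_Q\mathcal A(\ep(\bfu),\ep(\bfpsi))\dx$ into $\int_Q\mathcal A(\ep(\bfw),\ep(\bfphi))\dx$ and, applied to $\bfG$, converts the force term $\int_Q\bff\cdot\bfpsi\dxt=\int_Q(\mathscr A_q^{\frac12}\bfG)\cdot(\mathscr A_{q'}^{\frac12}\bfphi)\dxt$ into $\int_Q\mathcal A(\ep(\bfG),\ep(\bfphi))\dxt=\int_Q\bfF:\nabla\bfphi\dxt$ by the defining equation of $\bfG$. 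Collecting the three contributions gives exactly (\ref{eq:vH''}). For uniqueness within the class $L^q(0,T;W^{1,q}_{0,\Div}(\Omega))$ I would use the standard parabolic duality argument: any such solution has $\partial_t\bfw\in L^q(0,T;W^{-1,q}(\Omega))$ by the equation and hence a well-defined initial trace in $L^q(\Omega)$ which vanishes; testing the homogeneous equation for the difference of two solutions against the solution of the backward-in-time adjoint problem with an arbitrary datum $\bfg\in C^\infty_{0,\Div}(Q)$ (this adjoint problem is of the same type, with the again elliptic tensor $\mathcal A^T$, and is solvable by Theorem~\ref{thm:Lqglobal} after reversing time) yields $\int_Q\bfw\cdot\bfg\dxt=0$, whence $\bfw=0$.

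The real content of the theorem is already contained in Theorem~\ref{thm:Lqglobal}, i.e.\ in the maximal $L^q$-regularity for the $\mathcal A$-Stokes operator; given that, everything above is soft. Consequently the steps I expect to need the most care are the reduction step --- ensuring that $t\mapsto\mathscr A_q^{\frac12}\bfG(t)$ is strongly measurable and bounded in $L^q(0,T;L^q_{\Div}(\Omega))$ with a constant controlled by $\|\bfF\|_{L^q(Q)}$ --- and the density arguments in the verification of (\ref{eq:vH''}), needed to move the fractional powers $\mathscr A_q^{\frac12}$, $\mathscr A_{q'}^{\frac12}$ on and off the merely smooth solenoidal test functions while keeping track of the symmetric-gradient structure. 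The genuinely hard part, the maximal regularity itself, has already been carried out earlier in this section.
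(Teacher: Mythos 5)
For $q>2$ your argument is essentially the paper's: you reduce the divergence-form datum to a force in $L^q_{\Div}(\Omega)$, invoke the global maximal regularity of Theorem~\ref{thm:Lqglobal}, and transfer back with $\mathscr A_q^{\frac12}$ and the self-adjointness relations (\ref{eq:sorootcont2}), (\ref{eq:ainv2}). Your variant of defining $\bff$ through the stationary solution $\bfG(t)$ of Lemma~\ref{lems:Aq}~b) and setting $\bff=\mathscr A_q^{\frac12}\bfG$ is equivalent (up to sign) to the paper's $\bff=\mathscr A_q^{-\frac12}\Div\bfF$ defined by duality, and it makes the bound $\|\bff\|_q\le c\|\bfF\|_q$ slightly more transparent. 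The genuine divergence from the paper is the case $q\le 2$. You dispose of it by asserting that Theorem~\ref{thm:Lqglobal} itself extends to $q<2$ by duality; the paper deliberately does \emph{not} do this, keeping the second-order theory restricted to $q>2$ and instead running a first-order duality directly on the statement of Theorem~\ref{lem:reg2}: it writes $\dashint_Q|\nabla\bfw|^q$ as a supremum over $\bfG\in L^{q'}(Q)$, solves the backward problem with divergence-form datum $\bfG$ (available from the first part since $q'>2$), and closes with the equation for $\bfw$ and Young's inequality. Your route is viable, but be aware that the naive duality for the second-order estimate fails (pairing $\nabla^2\bfv$ with $\bfG$ and integrating by parts twice produces a datum in $W^{-2,q'}$ that none of the earlier results handle); one must dualize at the operator level, i.e.\ show $\|\mathscr A_q\bfv\|_q\le c\|\bff\|_q$ for $q<2$ from the adjoint (backward-in-time, transposed-tensor) problem and then recover $\|\bfv\|_{2,q}$ from the elliptic equivalence (\ref{eq:socont1}). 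That works, but it is a nontrivial step compressed into a parenthesis, and the paper's first-order duality avoids it entirely at no extra cost. Everything else --- measurability of $t\mapsto\mathscr A_q^{\frac12}\bfG(t)$, the density arguments for moving the square roots onto the test functions, and the uniqueness via the backward adjoint problem --- is handled correctly.
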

\begin{proof}
Let us first assume that $q>2$. Then Theorem \ref{thm:Lqglobal} applies.
We set $\bff:=\mathscr A_q^{-\frac{1}{2}}\Div\bfF$ which is defined via the duality
\begin{align*}
\int_\Omega \mathscr A^{-\frac{1}{2}}_q\Div\bfF \cdot\bfphi\dx=\int_\Omega \bfF :\nabla \mathscr A^{-\frac{1}{2}}_{q'}\bfphi\dx,\quad \bfphi\in C^\infty_{0,\Div}(\Omega),
\end{align*}
using \eqref{eq:ainv2}. So we gain $\bff\in L^q_{\Div}(\Omega)$ with 
\begin{align}\label{eq:fF}
\|\bff\|_q\leq \,c\,\|\bfF\|_q.
\end{align}
%Moreover, we consider $\tilde{\bfw}_0:=\mathscr A_q^{-\frac{1}{2}}\bfw_0$ and infer from (\ref{eq:a1/2})
%that $\tilde{\bfw}_0\in W^{2,q}\cap W^{1,q}_{0,\Div}(B)$ together with
%\begin{align}\label{eq:ww0}
%\int_B |\nabla^2\tilde{\bfw}_0|^q\dx\leq \,c\,\int_B |\nabla\bfw_0|^q\dx.
%\end{align}
We define $\tilde{\bfw}\in  L^q(0,T;W^{1,q}_{0,\Div}(\Omega))$ as the unique solution to
  \begin{align}
    \label{eq:vH'}
    \begin{aligned}
   \int_Q \tilde{\bfw}\cdot \partial_t\bfphi\dxt  -&\int_Q \mathcal A(\ep(\tilde{\bfw}),\ep(\bfphi)) \dxt=
   \int_Q \bff\cdot\bfphi\dxt
    \end{aligned}
  \end{align}
  for all $\bfphi \in C^\infty_{0,\Div}([0,T)\times \Omega)$. Theorem \ref{thm:Lqglobal} yields $\tilde{\bfw} \in L^q(0,T;W^{2,q}(\Omega))$ and
  \begin{align}\label{eq:wf}
\|\tilde{\bfw}\|_{2,q}\leq \,c\,\|\bff\|_q.
  \end{align}
  We want to return to the original problem and set $\bfw:=\mathscr A^{\frac{1}{2}}_q\tilde{\bfw}$ thus $\bfw\in L^q(0,T;W^{1,q}_{0,\Div}(\Omega))$. 
  Since $\mathscr A_{q'}^{\frac{1}{2}}:W^{1,q'}_{0,\Div}\cap W^{2,q'}(\Omega)\rightarrow W^{1,q'}_{0,\Div}(\Omega)$ we can replace
  $\bfphi$ by $\mathscr A_{q'}^{\frac{1}{2}}\bfphi$ in \eqref{eq:vH'}. This implies using \eqref{eq:sorootcont2} and the definition of $\bff$
   \begin{align*}
   \int_Q \bfw\cdot \partial_t\bfphi\dxt  +\int_Q &\Div\mathcal A(\ep(\tilde{\bfw})):\mathscr A_{q'}^{\frac{1}{2}}\bfphi \dxt
   = \int_Q \bfF:\nabla\bfphi
  \end{align*}
   for all $\bfphi \in C^\infty_{0,\Div}(Q)$. On account of $\mathscr A_{q'}^{\frac{1}{2}}\bfphi\in W^{1,q'}_{0,\Div}(\Omega)$ and $\mathscr A_{q}^{\frac{1}{2}}\tilde{\bfw}\in W^{1,q}_{0,\Div}(\Omega)$ we gain due to \eqref{eq:sorootcont2}
   \begin{align*}
\int_Q \Div\mathcal A(\ep(\tilde{\bfw})):\mathscr A_{q'}^{\frac{1}{2}}\bfphi \dxt   &=\int_Q \mathscr A_{q}\tilde{\bfw}:\mathscr A_{q'}^{\frac{1}{2}}\bfphi \dxt 
 =\int_Q \mathscr A_{q}^{\frac{1}{2}}\tilde{\bfw}:\mathscr A_{q'}\bfphi \dxt\\
 &=\int_Q \bfw\cdot \Div\mathcal A(\ep(\bfphi)) \dxt=-\int_Q \mathcal A(\ep(\bfw),\ep(\bfphi)) \dxt
   \end{align*}
   using \eqref{eq:sorootcont2} and $\bfw\in W^{1,q}_{0,\Div}(\Omega)$. This shows that $\bfw$ is the unique solution to \eqref{eq:vH''}. Moreover,
   we obtain the desired regularity estimate via
   \begin{align*}
\int_{Q}|\nabla\bfw|^q\dxt&\leq \,c\,\int_{Q}|\mathscr A_q^{\frac{1}{2}}\bfw|^q\dxt=\,c\,\int_{Q}|\mathscr A_q\tilde{\bfw}|^q\dxt\\
&\leq\,c\,\int_{Q}|\nabla^2\tilde{\bfw}|^q
\leq \,\,c\,\int_{Q}|\bff|^q\dxt\\&\leq  \,c\,\int_{Q}|\bfF|^q\dxt
  \end{align*}
   as a consequence of \eqref{eq:sorootcont1}, the definition of $\bfw$, (eqref{eq:socont1}, \eqref{eq:wf}, and \eqref{eq:fF}. A simple scaling argument shows that
   the inequality is independent of the diameter of $I$ and $B$. So we have shown the claim for $q>2$.\\
   The case $q=2$ follows easily from a priori estimates and Korn's inequality. So let us
assume that $q<2$.  Duality arguments show that
  \begin{align*}
    \frac{1}{q}\dashint_Q \abs{\nabla\bfw}^q\dxt = \, \sup_{\bfG \in L^{q'}(Q)}
    \bigg[ \dashint_Q \nabla\bfw:\bfG\dxt - \frac{1}{q'}\dashint_Q \abs{
      \bfG}^{q'}\dxt \bigg].
  \end{align*}
  For a given $\bfG\in L^{q'}(Q)$ let $\bfz_\bfG$ be the unique
  $L^{q'}(0,T;W^{1,q'}_{0,\Div}(\Omega))$-solution to
  \begin{align*}
  -\dashint_{Q} \bfz\cdot \partial_t\bfxi\dxt  + \int_Q \mathcal{A}(\ep(\bfz),\ep(\bfxi)) \dxt = \int_Q \bfG
    : \nabla \bfxi \dxt
  \end{align*}
  for all $\bfxi \in C^\infty_{0,\Div}([0,T)\times \Omega)$. Its existence together with the estimate
  \begin{align*}
    \dashint_Q|\nabla \bfz_{\bfG}|^{q'}\dxt\leq
    c\dashint_Q|\bfG|^{q'}\dxt
  \end{align*}
  follows from the first part of the proof as $q'>2$.
This, the density of
  $C^\infty_{0,\Div}(Q)$ and $\partial_t\bfz\in L^{q'}(0,T;W^{-1,q'}_{\Div}(\Omega))$ yield
  \begin{align*}
    \dashint_Q &\abs{\nabla\bfw}^q\dxt\\& \leq \, c\sup_{\bfG \in
      L^{q'}(Q)} \bigg[ \dashint_Q \mathcal{A}(\ep(\bfw),
    \ep(\bfz_\bfG))\dxt+\dashint_{Q} \partial_t\bfz_\bfG\cdot \bfu\dxt - \dashint_Q \abs{ \nabla\bfz_\bfG}^{q'}\dxt
    \bigg]
    \\
    &\leq c\, \sup_{\bfxi \in C^\infty_{0,\Div}(Q)} \bigg[ \dashint_Q
    \mathcal{A}(\ep(\bfw), \ep(\bfxi))\dxt-\dashint_{Q} \bfu\cdot \partial_t\bfxi\dxt- \dashint_Q \abs{\nabla
      \bfxi}^{q'}\dxt\bigg].
  \end{align*}
  The equation for $\bfw$ and Young's inequality imply
    \begin{align*}
    \dashint_Q \abs{\nabla\bfw}^q\dxt
    &\leq \,c\, \sup_{\bfxi \in C^\infty_{0,\Div}(Q)} \bigg[ \dashint_Q
    \bfF:\nabla\bfxi\dxt- \dashint_Q \abs{\nabla
      \bfxi}^{q'}\dxt\bigg]\\
      &\leq\,c\,\dashint_Q|\bfF|^{q}\dxt
  \end{align*}
  and hence the claim.
\end{proof}

\newpage

\section{Solenoidal Lipschitz truncation}
\label{sec:soltrunc}
The purpose of the Lipschitz truncation technique is to approximate a
Sobolev function~$u \in W^{1,p}$ by $\lambda$-Lipschitz
functions~$u_\lambda$ that coincides with~$u$ up to a set of small
measure. The functions $u_\lambda$ are constructed nonlinearly by
modifying~$u$ on the level set of the Hardy-Littlewood maximal
function of the gradient~$\nabla u$.  This idea goes back to Acerbi
and Fusco~\cite{AF}.  Lipschitz truncations are used in various
areas of analysis: calculus of variations, in the existence theory of
partial differential equations, and in the regularity theory. We refer
to~\cite{DMS} for a longer list of references.
The Lipschitz truncation in the context of parabolic PDEs can be found in \cite{KinLew02} and \cite{DRW}. The solenoidal Lipschitz truncation in the non-stationary setting is introduced in \cite{BrDS}. We present a version of it which is appropriate for our purposes. For notational simplicity we assume $d=3$ and refer to \cite{BrDS} (remark 2.1) for the higher dimensional case. In the following let us denote by $Q_0=I_0\times B_0\subset\R\times\R^3$ a parabolic cylinder with radius $r_0$.
\begin{them}
  \label{cor:appl}
Let $\bfu\in L^\sigma(I_0;W^{1,\sigma}_{\div}(B_0))\cap L^\infty(I_0;L^\sigma(B_0))$  for some $\sigma>0$ with $\partial_t\bfu=\Div\bfH$ in $\mathcal D'_{\Div}(Q_0)$ with $\bfH\in L^\sigma(Q_0)$. Then for every $m_0\gg1$ and $\gamma>0$
there exist $\lambda\in[2^{m_0}\gamma,2^{2m_0}\gamma]$ and a function $\bfu_\lambda$ with the following properties
 \begin{enumerate}[label=(\alph{*})]
  \item \label{itm:1} It holds $\bfu_\lambda\in L^\infty(I_0, W^{1,\infty}_{0,\Div}(B_0))$ with $\|\nabla\bfu_\lambda\|_{\infty}\leq c\lambda$.
  \item \label{itm:2} We have
  \begin{align*}
   \lambda^{\sigma} \frac{\mathcal{L}^{d+1}\left(\frac 12 Q_0\cap\set{\bfu_\lambda \not=
          \bfu}\right)}{\abs{Q_0}} &\leq \frac{c}{m_0} \bigg(\dashint_{Q_0}
    r_0^{-\sigma}\abs{\bfu}^{\sigma}+\abs{\nabla \bfu}^{\sigma}\dxt+\dashint_{Q_0} \abs{ \bfH}^{\sigma}\dxt\bigg).
    \end{align*}
  \item \label{itm:3} It holds
\begin{align*}  
  \dashint_{Q_0}
    \abs{\bfu_\lambda}^{\sigma}\dxt&\leq c\bigg(\dashint_{Q_0}
    \abs{\bfu}^{\sigma}+\dashint_{Q_0} r_0^{\sigma}\abs{ \bfH}^{\sigma}\dxt\bigg),\\
  \dashint_{Q_0}\abs{\nabla \bfu_\lambda}^{\sigma}\dxt&\leq c\bigg(\dashint_{Q_0}
    r_0^{-\sigma}\abs{\bfu}^{\sigma}+\abs{\nabla \bfu}^{\sigma}\dxt+\dashint_{Q_0} \abs{ \bfH}^{\sigma}\dxt\bigg).
  \end{align*}
   \item \label{itm:4} We have $\partial_t(\bfu-\bfu_\lambda)\in L^{\sigma'}(\frac 12 I_0,W^{-1,\sigma'}(\frac 12 B_0))$ and
   \begin{align*}
   -\dashint_{\frac 12 Q_0} &(\bfu-\bfu_\lambda)\cdot\partial_t\bfphi\dxt\\
  & \leq c(\kappa)\int_{\frac 12 Q_0} \chi_{\mathcal O_\lambda}|\nabla\bfphi|^{\sigma'}\dxt+\kappa \bigg(\dashint_{Q_0}
    r_0^{-\sigma}\abs{\bfu}^{\sigma}+\abs{\nabla \bfu}^{\sigma}+ \abs{ \bfH}^{\sigma}\dxt\bigg)
   \end{align*}
   for all $\bfphi\in C^\infty_0(\frac 12 Q_0)$ and all $\kappa>0$.
  \end{enumerate}
\end{them}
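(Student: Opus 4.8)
The plan is to keep the construction of $\bfu_\lambda$ exactly as in \cite{BrDS} — items (a), (b), (c) are precisely the estimates proved there, once one matches the normalisation by $r_B$ and the dyadic selection range — and to add item (d) by differentiating that construction in time, using the structural hypothesis $\partial_t\bfu=\Div\bfH$. Let me recall the set-up. One forms the parabolic maximal function $g:=\mathcal M\big(r_B^{-1}|\bfu|+|\nabla\bfu|+|\bfH|\big)$, sets $\mathcal O_\mu:=\{g>\mu\}$, and by a pigeon-hole argument over the $m_0$ dyadic levels $\mu\in\{2^j\gamma:m_0\le j<2m_0\}$ one picks the good level $\lambda\in[2^{m_0}\gamma,2^{2m_0}\gamma]$ for which the Whitney cylinders covering $\mathcal O_\lambda$ that meet the transition layer of $\tfrac12 Q$ carry only a fraction $c/m_0$ of the total energy; this yields (b). One then fixes a Whitney covering $\{Q_i\}$ of $\mathcal O_\lambda$ with subordinate partition of unity $\{\psi_i\}$ of parabolic type ($|\nabla\psi_i|\lesssim r_i^{-1}$, $|\partial_t\psi_i|\lesssim r_i^{-2}$), chooses local solenoidal averages $\bfu_i$ of $\bfu$ on $Q_i$, and sets
\begin{align*}
\bfu_\lambda:=\bfu-\sum_i\psi_i(\bfu-\bfu_i)+\sum_i\Bog_i\big(\nabla\psi_i\cdot(\bfu-\bfu_i)\big),
\end{align*}
the last sum being the local Bogovskii correction that restores $\Div\bfu_\lambda=0$. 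Since $4Q_i\not\subset\mathcal O_\lambda$ one has $\dashint_{Q_i}\big(r_B^{-1}|\bfu|+|\nabla\bfu|+|\bfH|\big)\lesssim\lambda$ on each Whitney cylinder, from which (a) and (c) follow in the usual way.

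For (d) I would fix $\bfphi\in C^\infty_0(\tfrac12 Q)$, plug the formula for $\bfu_\lambda$ into $-\dashint_{\frac12 Q}(\bfu-\bfu_\lambda)\cdot\partial_t\bfphi\dxt$, integrate by parts in $t$, write $\psi_i\partial_t\bfphi=\partial_t(\psi_i\bfphi)-(\partial_t\psi_i)\bfphi$, and use $\partial_t\bfu=\Div\bfH$ to turn every time derivative that falls on $\bfu$ into a spatial divergence. The result decomposes into: (i) a main term which, after resumming the partition of unity, equals $\dashint_{\frac12 Q}\chi_{\mathcal O_\lambda}\bfH:\nabla\bfphi\dxt$; (ii) commutator terms of the form $\sum_i\nabla\psi_i\cdot(\text{local defect})$ and $\sum_i\partial_t\psi_i\cdot(\text{local defect})$, which — using $\sum_i\nabla\psi_i=\sum_i\partial_t\psi_i=0$ on $\mathcal O_\lambda$ to subtract a neighbouring average, together with $|\bfu-\bfu_i|\lesssim r_i\lambda$, $|\bfu_i-\bfu_j|\lesssim r_i\lambda$ and $|\bfH_i-\bfH_j|\lesssim\lambda$ on overlapping cylinders — collapse, exactly as in the proof of the gradient bound (a), into $\dashint_{\frac12 Q}\bfG_\lambda:\nabla\bfphi\dxt$ with $\bfG_\lambda$ supported in $\mathcal O_\lambda$ and $\|\bfG_\lambda\|_\infty\le c\lambda$; and (iii) the time derivative of the Bogovskii term, for which one applies $\Bog_i$ to $\partial_t\big(\nabla\psi_i\cdot(\bfu-\bfu_i)\big)$ and invokes $\partial_t\bfu=\Div\bfH$ once more, producing again a contribution of the type in (ii). Altogether this realises $\partial_t(\bfu-\bfu_\lambda)=\Div\big(\chi_{\mathcal O_\lambda}\bfH+\bfG_\lambda\big)$ with $\chi_{\mathcal O_\lambda}\bfH+\bfG_\lambda$ supported in $\mathcal O_\lambda$, which already gives $\partial_t(\bfu-\bfu_\lambda)\in L^{\sigma'}(\tfrac12 I,W^{-1,\sigma'}(\tfrac12 B))$.

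It then remains to estimate $\dashint_{\frac12 Q}\big(\chi_{\mathcal O_\lambda}\bfH+\bfG_\lambda\big):\nabla\bfphi\dxt$. For the $\bfH$-part, Young's inequality gives $\le c(\kappa)\int_{\frac12 Q}\chi_{\mathcal O_\lambda}|\nabla\bfphi|^{\sigma'}\dxt+\kappa\dashint_{Q}|\bfH|^{\sigma}\dxt$. For the $\bfG_\lambda$-part, since $|\bfG_\lambda|\le c\lambda$ on $\mathcal O_\lambda$, Young's inequality gives $\le c(\kappa)\int_{\frac12 Q}\chi_{\mathcal O_\lambda}|\nabla\bfphi|^{\sigma'}\dxt+\kappa\,\lambda^{\sigma}\,\dfrac{\mathcal L^{d+1}(\mathcal O_\lambda)}{|Q|}$, and the weak-type bound $\lambda^{\sigma}\mathcal L^{d+1}(\mathcal O_\lambda)=\lambda^{\sigma}\mathcal L^{d+1}(\{g>\lambda\})\le\|g\|_{\sigma}^{\sigma}\le c\int_{Q}\big(r_B^{-\sigma}|\bfu|^{\sigma}+|\nabla\bfu|^{\sigma}+|\bfH|^{\sigma}\big)\dxt$, which follows from (\ref{eq:Mascont}) with $s=1$, converts it into the desired $\kappa\big(\dashint_{Q}r_B^{-\sigma}|\bfu|^{\sigma}+|\nabla\bfu|^{\sigma}+|\bfH|^{\sigma}\dxt\big)$ term. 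Adding the two contributions proves (d).

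I expect the genuine difficulty to lie in steps (ii) and (iii): one must reproduce, now at the level of the time derivative, the same cancellation and finite-overlap bookkeeping that underlies the gradient estimate (a) in \cite{BrDS}, and verify that the local divergence correction is compatible with $\partial_t$, so that all the ``bad'' terms of size $r_i^{-1}\lambda$ — which are genuinely present, as $\partial_t\bfu_\lambda$ is not bounded — really do assemble into the divergence of a field that is bounded by $c\lambda$ and supported in $\mathcal O_\lambda$. Everything else — the pigeon-hole choice of $\lambda$, the Whitney estimates, and the final Young/maximal-function step — is routine given \cite{BrDS}.
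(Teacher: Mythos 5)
Your proposal rests on the claim that the construction in \cite{BrDS} is the ``direct'' one, $\bfu_\lambda=\bfu-\sum_i\psi_i(\bfu-\bfu_i)+\sum_i\Bog_i\big(\nabla\psi_i\cdot(\bfu-\bfu_i)\big)$, with a local Bogovskii correction on each Whitney cylinder. It is not, and this is where the argument breaks: $\Bog$ is bounded from $L^q_0$ into $W^{1,q}_0$ only for $1<q<\infty$, with a constant that degenerates as $q\to\infty$, so the correction term does not inherit a pointwise bound $|\nabla(\cdot)|\le c\lambda$ from the Whitney data, and item (a) --- the defining property of a \emph{Lipschitz} truncation --- is lost. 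Avoiding precisely this endpoint failure is the reason the solenoidal truncation of \cite{BrDS}, which the paper reproduces, works at the level of a potential: one extends $\bfu$ divergence-free by a single global correction $\tilde\bfu=\gamma\bfu-\Bog_A(\nabla\gamma\cdot\bfu)$ (harmless, since it is only estimated in $L^\sigma$), sets $\bfw=\curl^{-1}\tilde\bfu$, projects to $\bfz=\Delta\Delta^{-2}\Delta\bfw$ so that $\partial_t\bfz\in L^\sigma$ is controlled by $\bfH$, truncates $\bfz$ on a Whitney decomposition of $\mathcal O_\lambda=\{\mathcal M_\sigma(\chi|\nabla^2\bfz|)>\lambda\}\cup\{\mathcal M_\sigma(\chi|\partial_t\bfz|)>\lambda\}$ by averaged Taylor polynomials (first order in space, zero order in time), and only then sets $\bfu_\lambda=\bfu+\curl(\bfz_\lambda-\bfz)$; solenoidality is then automatic and the Lipschitz bound comes from the $W^{2,\infty}$-type control of $\bfz_\lambda$. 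Note also that your $\mathcal O_\lambda$ and good-level function $g$ are built from $r_B^{-1}|\bfu|+|\nabla\bfu|+|\bfH|$, whereas in the paper they are built from $\nabla^2\bfz$ and $\partial_t\bfz$, which is what makes (b) and the exceptional-set estimate in (d) come out in the stated form.

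The change of construction also changes the nature of (d). In the paper, $\bfu-\bfu_\lambda=\curl(\bfz-\bfz_\lambda)$ with $\bfz-\bfz_\lambda$ supported in $\mathcal O_\lambda$ and with $\partial_t\bfz,\ \partial_t\bfz_\lambda$ genuinely in $L^\sigma$ with the bounds of (3.5) and (3.10); one simply moves $\curl$ onto the test function, writes $-\dashint(\bfu-\bfu_\lambda)\cdot\partial_t\bfphi\dxt=\dashint\chi_{\mathcal O_\lambda}\partial_t(\bfz-\bfz_\lambda)\cdot\curl\bfphi\dxt$ and applies Young's inequality --- no cancellation is required. In your route the terms $(\partial_t\psi_i)(\bfu-\bfu_i)\sim r_i^{-1}\lambda$ and $\partial_t\Bog_i(\cdots)$ must be reassembled into $\Div$ of an $L^\infty$ field supported in $\mathcal O_\lambda$; you correctly identify this as the crux but do not carry it out, and there is no reason to expect it to succeed, since the same endpoint failure of $\Bog$ that defeats (a) resurfaces when $\partial_t$ hits the local corrections. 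So the proposal has a genuine gap both in the construction underlying (a)--(c) and in the unproved core of (d).
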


\begin{proof}

Let $\gamma \in C_0^\infty(\Bz)$ with$ \chi_{\frac 12 \Bz} \leq \gamma
\leq \chi_{\Bz}$. Let $A$ denote the annulus $\Bz \setminus \frac 12 \Bz$.
Then according to \cite{Bog} there exists a \Bogovskii{}
operator $\Bog_A$ from\footnote{$C^\infty_{0,0}$ is the subspace of
  $C^\infty_0$ whose elements have mean value zero.}
$C^\infty_{0,0}(A) \to C^\infty_0(A)$ which is bounded from $L^q_0(A)
\to W^{1,q}_0(A)$ for all $q \in (1,\infty)$ such that $\divergence
\Bog_A = \identity$.  Define
\begin{align*}
  \tilde{\bfu} &:= \gamma \bfu - \Bog_A( \divergence(\gamma \bfu)) =
  \gamma \bfu - \Bog_A( \nabla\gamma \cdot \bfu).
\end{align*}
Then $\divergence \tilde{\bfu}=0$ on $\Iz \times \Bz$ and
$\tilde{\bfu}(t) \in W^{1,\sigma}_0(\Bz)$, so we can extend
$\tilde{\bfu}$ by zero in space to $\tilde{\bfu} \in L^\sigma(I,
W^{1,\sigma}_{\divergence}(\Rdr))$.  Since $\tilde{\bfu}=\bfu$ on $I
\times \frac 12 \Bz$, we have
\begin{align}
  \label{eq:uH2}
  \int_{\Qz} \partial_t \tilde{\bfu} \cdot \bfxi \dxt &= \int_{\Qz}
  \bfH : \nabla \bfxi \dxt \qquad \text{for all } \bfxi \in
  C^\infty_{0,\divergence}(\tfrac 12\Qz).
\end{align}
On the space $W^{1,\sigma}_{\divergence}(\setR^3)$ with $\sigma>1$ we
define the inverse curl operator $\curl^{-1}$ by
\begin{align*}
  \curl^{-1} \bfg := \curl(\Delta^{-1} \bfg) := \curl\bigg(\int_\Rdr
  \frac{-1}{4\pi \abs{x-y}} \bfg(y)\dy\bigg).
\end{align*}
Now, we define pointwise in time
\begin{align*}
  \bfw := \curl^{-1} (\tilde{\bfu}) = \curl^{-1}\big(   \gamma \bfu -
  \Bog_{\Bz \setminus \frac 12 \Bz}( \nabla\gamma \cdot \bfu) \big).
\end{align*}
It follows from Lemma 2.1  in \cite{BrDS}) and continuity properties of $\Bog$ that
\begin{align}
\label{eq:w}
\begin{aligned}
\int_{\frac{1}{2}Q_0}|\nabla\bfw|^{\sigma}\dxt&\leq\,c\,\dashint_{Q_0}
    \abs{\bfu}^{\sigma}\dxt,\\
\int_{\frac{1}{2}Q_0}|\nabla^2\bfw|^\sigma\dxt&\leq\,c\,\bigg(\,\,\dashint_{Q_0}r_0^{-\sigma}
    \abs{\bfu}^{\sigma}\dxt+\int_{Q_0}\abs{\nabla \bfu}^{\sigma}\dxt\bigg).
\end{aligned}
\end{align}
For a ball $B' \subset \Rdr$ and a function $f \in L^s(B')$ we define $\Delta^{-2}_{B'} \Delta f$ as the weak solution $F \in
W^{2,s}_0(B')$ to
\begin{align}
  \label{eq:defbi2}
  \int_{B'}\Delta F\Delta\phi\dx &= \int_{B'}f\Delta\phi\dx \qquad
  \text{for all}\quad\phi \in C^\infty_0(B').
\end{align}
Then $f - \Delta (\Delta^{-2}_{B'} \Delta f)$ is harmonic in~$B'$. We define $\bfz(t) := \bfz_{\frac 12 \Qz}(t) = \Delta \Delta_{\frac 12
  \Bz}^{-2} \Delta \bfw(t)$ for $t \in \frac12 \Iz$, then
\begin{align}
  \label{eq:zt}
  \int_{\Qz} \bfz\cdot \partial _t \Delta\bfpsi \dxt &= \int_{\Qz}
  \bfw\cdot \partial _t \Delta\bfpsi \dxt = - \int_{\Qz} {\bfH}:
  \nabla^2\bfpsi \dxt,
\end{align}
for all $\bfpsi \in C^\infty_0(\tfrac 12 \Qz)$.  Since $\Delta_{\frac
  12 \Bz}^{-2} \bfw(t) \in W^{2,s}_0(\frac 12 \Bz)$, we can extend it
by zero to a function from $W^{2,s}(\Rdr)$. In this sense it is natural
to extend $\bfz(t)$ by zero to a function from $L^s(\Rdr)$. As a consequence of \cite[Lemma 2.3]{BrDS} and (\ref{eq:w}) we have
\begin{align}
\label{eq:z0}
\begin{aligned}
\int_{\frac{1}{2}Q_0}|\nabla\bfz|^{\sigma}\dxt&\leq\,c\,\dashint_{Q_0}
    \abs{\bfu}^{\sigma}\dxt,\\
\int_{\frac{1}{2}Q_0}|\nabla^2\bfz|^{\sigma}\dxt&\leq\,c\,\bigg(\,\,\dashint_{Q_0}r_0^{-{\sigma}}
    \abs{\bfu}^{\sigma}\dxt+\int_{Q_0}\abs{\nabla \bfu}^{\sigma}\dxt\bigg),\\
\int_{\frac{1}{2}Q_0}|\partial_t\bfz|^{\sigma}\dxt&\leq\,c\,\int_{Q_0}\abs{\bfH}^{\sigma}\dxt.
\end{aligned}
\end{align}
For $\lambda,\alpha>0$ and $\sigma>1$ we define
% \comment{L+S: Why don't we cut of $\partial_t\bfz$ directly?}
\begin{align}
  \label{eq:defOal}
  \Oal &:= \set{\Mas(\chi_{\frac 13 \Qz} \abs{\nabla^2
      \bfz})>\lambda} \cup \set{\alpha \Mas(\chi_{\frac 13 \Qz}
    \abs{\partial_t\bfz}) > \lambda}.
\end{align}
We decompose $\Oal$ into a family of parabolic Whitney cubes $(Q_i)_{i\in\N}$ an consider a decomposition of unity $(\varphi)_{i\in\N}$ with respect to it as done in \cite{BrDS} after (2.11).
We define
\begin{align*}
  \mathcal{I} := \set{ i \,:\, Q_i \cap \tfrac 14 \Qz \neq \emptyset}.
\end{align*}
Taking $\lambda$ large enough, the continuity of the maximal function (\ref{eq:Mascont}) implies $Q_i \subset \frac 13 \Qz$ (and
$Q_j \subset \frac 13 \Qz$ for $j \in A_i$) for all $i \in
\mathcal{I}$.  
  For each $i \in \mathcal{I}$ we define local approximation $\bfz_i$ for $\bfz$ on $Q_i$ by
  \begin{align}
    \label{eq:def_zj}
    \bfz_i &:=
    \Pi^0_{I_i}\Pi^1_{B_i}(\bfz),
  \end{align}
  where $\Pi^1_{B_j}(\bfz)$ is the first order averaged Taylor
  polynomial \cite{BreSco94,DieR07} with respect to space and $\Pi^0_{I_i}$ is the zero order
averaged Taylor polynomial in time. We set
\begin{align}
  \label{eq:defwlalt}
  \zal &=
  \begin{cases}
    \bfz & \qquad \text{on $\tfrac 14 \Qz \setminus \Oal$},
    \\
    \sum_{i \in \mathcal{I}} \phi_i \bfz_i &\qquad \text{on $\tfrac 14
      \Qz \cap \Oal$}.
  \end{cases}
\end{align}
We apply the arguments used in the proof of Theorem \cite{BrDS} (Thm. 2.2) to the constant sequence $\bfu_m=\bfu$ with the choice $\alpha=1$. We set
\begin{align}
    \label{eq:def-ulambda}
    \bfu_\lambda:=\curl(\zeta\bfz_\lambda)+\curl(\zeta(\bfw-\bfz)),
  \end{align}
  where $\zeta\in C^\infty_0(\frac 16 Q_0)$ with $\chi_{\frac 18
    Q_0}\leq\zeta\leq \chi_{\frac 16 Q_0}$. This means on $\frac 18
  Q_0$ there holds
  \begin{align*}
    \bfu_\lambda&= \bfu + \curl(\bfz_\lambda-\bfz).
  \end{align*}
We immediately obtain the claim of \ref{itm:1}. As a consequence of
the Lemmas 2.1, 2.4 and 2.9 in \cite{BrDS}
we gain the inequalities 
\begin{align}  \label{eq:z}
\begin{aligned}
  \dashint_{\frac{1}{4} Q_0}
    \abs{\nabla \bfz_\lambda}^{\sigma}\dxt&\leq c\bigg(\dashint_{Q_0}
    \abs{\bfu}^{\sigma}+\dashint_{\frac{1}{3}Q_0} r_0^{\sigma}\abs{ \partial_t\bfz}^{\sigma}\dxt\bigg),\\
  \dashint_{\frac{1}{4}Q_0}\abs{\nabla^2 \bfz_\lambda}^{\sigma}\dxt&\leq c\bigg(\dashint_{Q}r_0^{-\sigma}
    \abs{\bfu}^{\sigma}+\abs{\nabla \bfu}^{\sigma}\dxt+\dashint_{\frac 13 Q_0} \abs{ \partial_t\bfz}^{\sigma}\dxt\bigg),\\
  \dashint_{\frac{1}{4}Q_0}\abs{\partial_t \bfz_\lambda}^{\sigma}\dxt&\leq c\bigg(\dashint_{Q_0}r_0^{-\sigma}
    \abs{\bfu}^{\sigma}+\abs{\nabla \bfu}^{\sigma}\dxt+\dashint_{\frac 13 Q_0} \abs{ \partial_t\bfz}^{\sigma}\dxt\bigg),
\end{aligned}
  \end{align}
which imply the estimates
\begin{align*}  
  \dashint_{Q_0}
    \abs{\bfu_\lambda}^{\sigma}\dxt&\leq c\bigg(\dashint_{Q_0}
    \abs{\bfu}^{\sigma}+\dashint_{\frac{1}{3}Q_0} r_0^{\sigma}\abs{ \partial_t\bfz}^{\sigma}\dxt\bigg),\\
  \dashint_{Q_0}\abs{\nabla \bfu_\lambda}^{\sigma}\dxt&\leq c\bigg(\dashint_{Q_0}r_0^{-\sigma}
    \abs{\bfu}^{\sigma}+\abs{\nabla \bfu}^{\sigma}\dxt+\dashint_{\frac{1}{3}Q_0} \abs{ \partial_t\bfz}^{\sigma}\dxt\bigg).
  \end{align*}
Finally we can replace $\partial_t\bfz$ by $\bfH$ on account of (\ref{eq:z0}) if we replace $\frac{1}{3}Q_0$ by $Q_0$. \\
It remains to find good levels. So we set
  \begin{align*}
    g &:= \mathcal M_s(\chi_{\frac 13 Q_0}\abs{\nabla^2 \bfz})+  \mathcal M_s(\chi_{\frac 13 Q_0}\abs{\partial_t\bfz})
  \end{align*}
  and gain from the continuity of $\mathcal M_s$ and (\ref{eq:z0})
  \begin{align}\label{eq:solparaneu}
\begin{aligned}
  \int_{\R^{d+1}} \abs{g}^\sigma \,\dx &\leq c\bigg(\int_{\frac{1}{3} Q_0}
    \abs{\nabla^2 \bfz}^{\sigma}\dxt+\int_{\frac{1}{3} Q_0} \abs{ \partial_t\bfz}^{\sigma}\dxt\bigg)\\
&\leq c\bigg(\int_{Q_0}
    r_0^{-\sigma}\abs{\bfu}^{\sigma}+\abs{\nabla \bfu}^{\sigma}\dxt+\int_{Q_0} \abs{ \bfH}^{\sigma}\dxt\bigg).
\end{aligned}
  \end{align}
  Furthermore, there holds for every $m_0\in\N$ and every $\gamma>0$
    \begin{align*}
    \int_{\R^{d+1}} \abs{g}^\sigma \,\dx &= \int_{\R^{d+1}} \int_0^\infty
    \sigma t^{\sigma-1} \chi_{\{\abs{g}>t\}} \,\dt \,\dx\\& \geq
    \int_{\R^{d+1}} \sum_{m= m_0}^{2m_0-1} \sigma (2^m\gamma)^{\sigma}
    \chi_{\{\abs{g}>\gamma 2^{m+1}\}} \,\dx.
  \end{align*}
So, there is $m_1\in\set{m_0,...,2m_0-1}$ such that
  \begin{align*}
   \int_{\R^{d+1}} (2^{m_1}\gamma)^{\sigma}
    \chi_{\{\abs{g}>\gamma 2^{m_1+1}\}} \,\dx\leq \frac{c}{m_0} \int_{\R^{d+1}} \abs{g}^\sigma \,\dx .
  \end{align*}
  Setting $\lambda=\gamma 2^{m_1+1}$ we obtain
    \begin{align*}
   \lambda^{\sigma}
    |\tfrac{1}{3} Q_0\cap\set{\abs{g}>\lambda}|\leq \frac{c}{m_0} \int_{\R^{d+1}} \abs{g}^\sigma \,\dx .
  \end{align*}
  Combining this with \eqref{eq:solparaneu} gives the estimate in b) on $\frac{1}{8}Q_0$ due to the definition of $\mathcal O_\lambda$. A simple scaling argument allows us to obtain the desired estimates on $\frac 12 Q_0$.\\ 
  We have $\bfu_\lambda-\bfu=\curl(\bfz_\lambda-\bfz)$ on $\frac{1}{8}Q$ such that (\ref{eq:z0})$_3$ and (\ref{eq:z})$_3$ imply
  $$\partial_t(\bfu_\lambda-\bfu) \in L^{\sigma'}\big(\tfrac 18 I_0,W^{-1,\sigma'}(\tfrac 18 B_0)\big).$$ Moreover, we gain for $\bfphi\in C^\infty_0(\frac{1}{8}Q_0)$ and every $\kappa>0$
  \begin{align*}
   -\int_{\frac 18 Q_0} (\bfu-\bfu_\lambda)\cdot\partial_t\bfphi\dxt=\int_{\frac 18 Q_0} \chi_{\mathcal O_\lambda}\partial_t(\bfz-\bfz_\lambda)\cdot\curl\bfphi\dxt\\
    \leq c(\kappa)\int_{\frac 12 Q_0} \chi_{\mathcal O_\lambda}|\nabla\bfphi|^{\sigma'}\dxt+\kappa \bigg(\dashint_{Q_0}
    \abs{\partial_t(\bfz_\lambda-\bfz)}^{\sigma}\dxt\bigg)
   \end{align*}
   as a consequence of Young's inequality. Applying \eqref{eq:z0}$_3$ and \eqref{eq:z}$_3$ yields
\begin{align*}    
\dashint_{Q_0}
    \abs{\partial_t(\bfz_\lambda-\bfz)}^{\sigma}\dxt
&\leq \,c\,\bigg(\dashint_{Q_0}
    r_B^{-\sigma}\abs{\bfu}^{\sigma}+\abs{\nabla \bfu}^{\sigma}\dxt+\dashint_{Q_0} \abs{ \bfH}^{\sigma}\dxt\bigg).
 \end{align*}  
 So we have shown the estimate claimed in \ref{itm:4} on $\frac{1}{8}Q_0$.
  The same scaling argument as in b) yields the estimate for $\frac 12 Q_0$.
\end{proof}

\newpage

\section{$\mathcal{A}$-Stokes approximation - evolutionary case}
\label{sec:Astokespara}
Let $B\subset\R^d$ be a ball and $J=(t_0,t_1)$ a bounded interval. We set $Q=J\times B$.
For a function $\bfw\in L^1(Q)$ with $\partial_t \bfw \in L^{q'}(J;W^{-1,q'}_{\Div}(B))$ we introduce
the unique function $\bfH_\bfw\in L^{q'}_0(Q)$ with
\begin{align*}
\int_{Q} \bfw\cdot \partial_t\bfvarphi\dxt=\int_Q \bfH_\bfw:\nabla\bfvarphi\dxt
\end{align*}
for all $\bfvarphi\in C^\infty_{0,\Div}(Q)$.
We begin with a variational inequality for the non-stationary $\mathcal{A}$-Stokes
system.

\begin{lemma}
  \label{lem:varineqS} Suppose that \eqref{eq:ell} holds and that $q>1$. There holds for every
$\bfu\in C^0_w([t_0,t_1];L^1(B))\cap L^q(J;W^{1,q}(B))$ with $\bfu(t_0,\cdot)=0$ a.e. 
  \begin{align*}
%\begin{aligned}
    %\label{eq:Liptrunvi}
    \dashint_Q &\abs{\nabla\bfu}^q\dxt
    \leq \,c\, \sup_{\bfxi \in C^\infty_{0,\Div}(Q)} \bigg[\, \dashint_Q\Big(
    \mathcal{A}(\ep(\bfu), \ep(\bfxi))- \bfu\cdot \partial_t\bfxi\Big)\dxt\\&\qquad\qquad\qquad\qquad\qquad\qquad - \dashint_Q \Big(\abs{\nabla
      \bfxi}^{q'}+|\bfH_\bfxi|^{q'}\Big)\dxt\bigg],
%\end{aligned}
  \end{align*}
  where $c$ only depends on~$\mathcal{A}$, $q$ and $d$.
\end{lemma}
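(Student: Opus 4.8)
The plan is to argue by duality, in the same spirit as the proof of Theorem~\ref{lem:reg2} in the case $q<2$; the structural difference here is that no equation is available for $\bfu$, so the supremum over test functions has to be retained in the final estimate rather than being evaluated. By Young's inequality it suffices to show: for every $\bfG\in L^{q'}(Q)$ there is $\bfxi\in C^\infty_{0,\Div}(Q)$ such that
\begin{align*}
\dashint_Q\big(\mathcal A(\ep(\bfu),\ep(\bfxi))-\bfu\cdot\partial_t\bfxi\big)\dxt-\dashint_Q\big(\abs{\nabla\bfxi}^{q'}+\abs{\bfH_\bfxi}^{q'}\big)\dxt\;\geq\;\dashint_Q\bfG:\nabla\bfu\dxt-c\,\dashint_Q\abs{\bfG}^{q'}\dxt .
\end{align*}
Indeed, taking the supremum over $\bfxi$ and then over $\bfG$, Young's inequality turns $\sup_\bfG\big(\dashint_Q\bfG:\nabla\bfu\dxt-c\,\dashint_Q\abs{\bfG}^{q'}\dxt\big)$ into a constant multiple of $\dashint_Q\abs{\nabla\bfu}^q\dxt$, which is the claim.

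The test function is produced from the adjoint problem. Writing $Q=J\times B$ with $J=(0,T)$ (the general case follows by translation in time), let $\bfz=\bfz_\bfG$ be the solution of the \emph{backward} $\mathcal A$-Stokes problem on $Q$ with right-hand side $\bfG$ in divergence form and vanishing terminal value $\bfz(T,\cdot)=0$. The time reflection $t\mapsto T-t$ turns this into a forward Stokes problem for the transposed tensor $\mathcal A^{T}$, which is again elliptic with the same constant, so Theorem~\ref{lem:reg2} applies and yields a unique $\bfz\in L^{q'}(J;W^{1,q'}_{0,\Div}(B))$ with
\begin{align*}
\dashint_Q\abs{\nabla\bfz}^{q'}\dxt\leq c\,\dashint_Q\abs{\bfG}^{q'}\dxt .
\end{align*}
Reading $\partial_t\bfz$ off the equation gives $\partial_t\bfz\in L^{q'}(J;W^{-1,q'}_{\Div}(B))$, so $\bfH_\bfz$ is well defined, and the same identity together with the displayed bound gives $\dashint_Q\abs{\bfH_\bfz}^{q'}\dxt\leq c\,\dashint_Q\abs{\bfG}^{q'}\dxt$. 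Moving $\partial_t$ onto $\bfz$, the weak formulation of the adjoint problem reads
\begin{align*}
-\dashint_Q\partial_t\bfz\cdot\bfxi\dxt+\dashint_Q\mathcal A(\ep(\bfxi),\ep(\bfz))\dxt=\dashint_Q\bfG:\nabla\bfxi\dxt
\end{align*}
for all $\bfxi\in C^\infty_{0,\Div}(Q)$, and by density this extends to $\bfxi=\bfu$, yielding the key identity
\begin{align*}
\dashint_Q\bfG:\nabla\bfu\dxt=\dashint_Q\mathcal A(\ep(\bfu),\ep(\bfz))\dxt-\dashint_Q\bfu\cdot\partial_t\bfz\dxt .
\end{align*}

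It then remains to replace $\bfz$ by genuinely smooth, compactly supported test functions. I would pick $\bfxi_n\in C^\infty_{0,\Div}(Q)$ with $\nabla\bfxi_n\to\nabla\bfz$ in $L^{q'}(Q)$ and $\partial_t\bfxi_n\to\partial_t\bfz$ in $L^{q'}(J;W^{-1,q'}_{\Div}(B))$; then, by continuity of $\bfw\mapsto\bfH_\bfw$, also $\bfH_{\bfxi_n}\to\bfH_\bfz$ in $L^{q'}(Q)$, while $\dashint_Q\bfu\cdot\partial_t\bfxi_n\to\dashint_Q\bfu\cdot\partial_t\bfz$ — and it is precisely here that $\bfu(0,\cdot)=0$ enters, absorbing the boundary-in-time contribution produced by the temporal cut-off in the construction of $\bfxi_n$. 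Passing to the limit along $(\bfxi_n)$ and inserting the key identity gives
\begin{align*}
&\sup_{\bfxi\in C^\infty_{0,\Div}(Q)}\bigg[\dashint_Q\big(\mathcal A(\ep(\bfu),\ep(\bfxi))-\bfu\cdot\partial_t\bfxi\big)\dxt-\dashint_Q\big(\abs{\nabla\bfxi}^{q'}+\abs{\bfH_\bfxi}^{q'}\big)\dxt\bigg]\\
&\qquad\geq\;\dashint_Q\bfG:\nabla\bfu\dxt-\dashint_Q\abs{\nabla\bfz}^{q'}\dxt-\dashint_Q\abs{\bfH_\bfz}^{q'}\dxt\;\geq\;\dashint_Q\bfG:\nabla\bfu\dxt-c\,\dashint_Q\abs{\bfG}^{q'}\dxt ,
\end{align*}
by the two bounds of the previous paragraph; this is exactly the reduced inequality, and the argument of the first paragraph completes the proof.

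The step I expect to be the main obstacle is the rigorous treatment of the time derivative underlying the second and third paragraphs: one must justify that $\bfu$, which a priori only lies in $C^0_w([0,T];L^1(B))\cap L^q(J;W^{1,q}(B))$ with $\bfu(0,\cdot)=0$, is an admissible test function in the weak formulation of the adjoint equation — i.e. the density of $C^\infty_{0,\Div}(Q)$ in the relevant space of functions with distributional time derivative, together with the cancellation of the initial-time terms — and, hand in hand with this, that the bound for $\bfH_\bfz$ holds and is stable under the approximation $\bfxi_n\to\bfz$ (which rests on the continuity of the linear map $\bfw\mapsto\bfH_\bfw$ on functions with $\partial_t\bfw\in L^{q'}(J;W^{-1,q'}_{\Div}(B))$). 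The remaining ingredients — the $L^{q'}$-regularity of $\bfz$ and the pointwise Young inequality — are routine or already contained in Theorem~\ref{lem:reg2}.
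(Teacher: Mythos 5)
Your proposal is correct and takes essentially the same route as the paper's proof: the duality representation of $\dashint_Q\abs{\nabla\bfu}^q\dxt$, the backward adjoint $\mathcal A$-Stokes problem solved via Theorem \ref{lem:reg2} with the estimate $\dashint_Q\abs{\nabla\bfz_\bfG}^{q'}\dxt+\dashint_Q\abs{\bfH_{\bfz_\bfG}}^{q'}\dxt\leq c\,\dashint_Q\abs{\bfG}^{q'}\dxt$, testing the adjoint equation with $\bfu$ using $\bfu(0,\cdot)=0$, and a density argument to return to smooth solenoidal test functions. The technical points you single out (the time reflection to the transposed tensor and the stability of $\bfw\mapsto\bfH_\bfw$ under the approximation $\bfxi_n\to\bfz_\bfG$) are precisely the steps the paper compresses into ``this and the density of $C^\infty_{0,\Div}(Q)$ gives due to $\bfu(0,\cdot)=0$''.
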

\begin{proof}
  Duality arguments show that
  \begin{align*}
    \frac{1}{q}\dashint_Q \abs{\nabla\bfu}^q\dxt = \, \sup_{\bfG \in L^{q'}(Q)}
    \bigg[ \,\dashint_Q \nabla\bfu:\bfG\dxt - \frac{1}{q'}\dashint_Q \abs{
      \bfG}^{q'}\dxt \bigg].
  \end{align*}
  For a given $\bfG\in L^{q'}(Q)$ let $\bfz_\bfG$ be the unique
  $L^{q'}(J;W^{1,q'}_{0,\Div}(B))$-solution to
  \begin{align}\label{eq:13}
  \dashint_{Q} \bfz\cdot \partial_t\bfxi\dxt  + \int_Q \mathcal{A}(\ep(\bfz),\ep(\bfxi)) \dxt = \int_Q \bfG
    : \nabla \bfxi \dxt
  \end{align}
  for all $\bfxi \in C^\infty_{0,\Div}((t_0,t_1]\times B)$. This is a backward parabolic equation with end datum zero. We have that $\partial_t\bfz_\bfG\in L^{q'}(J;W^{-1,q'}_{\Div}(B))$ such that test-functions can be chosen from the space $L^{q}(J;W^{1,q}_{0,\Div}(B))$.
  Due to Theorem \ref{lem:reg2} (which can be applied to $\tilde{\bfz}_{\tilde{\bfG}}(t,\cdot)=\bfz_\bfG(t_1-t,\cdot)$, where $\tilde{\bfG}(t,\cdot)=\bfG(t_1-t,\cdot)$) this solution
  satisfies 
  \begin{align*}
    \dashint_Q|\nabla \bfz_{\bfG}|^{q'}\dxt+\dashint_{Q}|\bfH_{\bfz_\bfG}|^{q'}\dxt\leq\,
    c\,\dashint_Q|\bfG|^{q'}\dxt.
  \end{align*}
  In other words, the mapping $L^{q'}(B)\ni\bfG\mapsto\bfz_\bfG\in
 L^{q'}(J; W^{1,q'}_{0,\Div}(B))$ is continuous. This and $\bfu(t_0,\cdot)=0$
yields (using $\bfu$ as a test-function in \eqref{eq:13})
  \begin{align*}
    \dashint_Q &\abs{\nabla\bfu}^q\dxt\\& \leq \, c\sup_{\bfG \in
      L^{q'}(Q)} \bigg[ \dashint_Q \mathcal{A}(\ep(\bfu),
    \ep(\bfz_\bfG))\dxt-\dashint_{Q} \partial_t\bfz_\bfG\cdot \bfu\dxt \\&\qquad\qquad\qquad\qquad\qquad\qquad- \dashint_Q \Big(\abs{ \nabla\bfz_\bfH}^{q'}+|\bfH_{\bfz_\bfG}|^{q'}\Big)\dxt
    \bigg]
    \\
    &\leq c\, \sup_{\bfxi \in C^\infty_{0,\Div}(Q)} \bigg[ \dashint_Q
    \mathcal{A}(\ep(\bfu), \ep(\bfxi))\dxt-\dashint_{Q} \bfu\cdot \partial_t\bfxi\dxt\\
    &\qquad\qquad\qquad\qquad\qquad\qquad - \dashint_Q \Big(\abs{\nabla
      \bfxi}^{q'}+|\bfH_\bfxi|^{q'}\Big)\dxt\bigg]
  \end{align*}
which yields the claim.
\end{proof} 
Let us now state the $\mathcal{A}$-Stokes approximation. In the following let $B$ be a ball with radius~$r$ and $J$ an interval with length $2r^2$. Let $\tilde{Q}$ denote either
  $Q=J\times B$ or $2Q$. We use similar notations for $\tilde{J}$ and $\tilde{B}$.
\begin{them}
  \label{Astokes} Suppose that \eqref{eq:ell} holds.
 Let $\bfv\in L^{qs}(2\tilde{J}; W^{1,qs}_{\Div}(2\tilde{B}))$, $q,s>1$, be an
  {almost $\mathcal{A}$-Stokes solution}
 in the sense that
  \begin{align}
  \label{eq:almAStokespara}
  \begin{aligned}
    \bigg|\dashint_{2Q} \bfv\cdot \partial_t\bfxi\dxt&-\dashint_{2Q} \mathcal{A}(\ep(\bfv),\ep(\bfxi))\dxt\bigg|\leq
    \delta \dashint_{2\tilde{Q}} |\ep(\bfv)|\dxt\,\|\nabla\bfxi\|_\infty
    \end{aligned}
  \end{align}
  for all $\bfxi\in C^{\infty}_{0,\Div}(2Q)$ and some
  small~$\delta>0$. Then the unique solution $\bfw \in
  L^q(J;W^{1,q}_{0,\Div}(B))$ to
  \begin{align}
    \label{eq:vH}
    \begin{aligned}
   \int_Q \bfw\cdot \partial_t\bfxi\dxt  &-\int_Q \mathcal{A}(\ep(\bfw),\ep(\bfxi)) \dxt\\
   &= \int_Q \bfv\cdot \partial_t\bfxi\dxt -
    \int_Q \mathcal{A}(\ep(\bfv),\ep(\bfxi)) \dxt
    \end{aligned}
  \end{align}
  for all $\bfxi \in C^\infty_{0,\Div}([t_0,t_1)\times B)$ satisfies
  \begin{align*}
\dashint_{Q} \Big|\frac{\bfw}{r}\Big|^q\dxt+\dashint_{Q}
    |\nabla\bfw|^q\dxt\leq \kappa\bigg(\dashint_{2\tilde{Q}}
    |\nabla\bfv|^{qs}\dxt\bigg)^{\frac{1}{s}}.
  \end{align*}
  It holds $\kappa=\kappa(q,s,\delta)$ and
  $\lim_{\delta\rightarrow0}\kappa(q,s,\delta)=0$. The function $\bfh
  := \bfv - \bfw$ is called the $\mathcal{A}$-Stokes approximation of
  $\bfv$. 
\end{them}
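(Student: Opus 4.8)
The plan is to combine the variational inequality of Lemma~\ref{lem:varineqS} with the solenoidal Lipschitz truncation of Theorem~\ref{cor:appl}, extending the strategy of the stationary case \cite{BrDF}; the genuinely new ingredient is part~(d) of Theorem~\ref{cor:appl}, which is used to absorb the time-derivative terms. First I record the preliminaries. By the $L^q$-theory of Section~\ref{sec:reg} the function $\bfw$ exists, it vanishes at the initial time and on $\partial B$ (hence $\dashint_Q|\bfw/r|^q\dxt\le c\,\dashint_Q|\nabla\bfw|^q\dxt$ by Poincar\'e in space, so it suffices to bound the gradient term), and $\bfh:=\bfv-\bfw$ solves the non-stationary $\mathcal{A}$-Stokes system; moreover $\bfv\in W^{1,q}$ since $qs>q$. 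Applying Lemma~\ref{lem:varineqS} to $\bfw$ and eliminating $\bfw$ by means of \eqref{eq:vH} yields
\begin{align*}
  \dashint_Q|\nabla\bfw|^q\dxt\le c\sup_{\bfxi\in C^\infty_{0,\Div}(Q)}\bigg[&\,\dashint_Q\big(\mathcal{A}(\ep(\bfv),\ep(\bfxi))-\bfv\cdot\partial_t\bfxi\big)\dxt\\
  &-\dashint_Q\big(|\nabla\bfxi|^{q'}+|\bfH_\bfxi|^{q'}\big)\dxt\bigg].
\end{align*}

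Writing $L(\bfxi):=\dashint_Q(\mathcal{A}(\ep(\bfv),\ep(\bfxi))-\bfv\cdot\partial_t\bfxi)\dxt$ and $A(\bfxi)^{q'}:=\dashint_Q(|\nabla\bfxi|^{q'}+|\bfH_\bfxi|^{q'})\dxt$, the map $\bfxi\mapsto L(\bfxi)$ is linear while $\bfxi\mapsto A(\bfxi)^{q'}$ is $q'$-homogeneous, so optimising over scalar multiples of $\bfxi$ bounds the right-hand side by $c\sup_\bfxi(|L(\bfxi)|/A(\bfxi))^q$. After rescaling $\bfv$ so that $\dashint_{2\tilde Q}|\nabla\bfv|^{qs}\dxt=1$ and $\bfxi$ so that $A(\bfxi)=1$ (using Poincar\'e, the right-hand side of Theorem~\ref{cor:appl}(b) is then $\le c/m_0$), it therefore suffices to prove $|L(\bfxi)|\le\kappa'$ with $\kappa'=\kappa'(q,s,\delta)\to0$ as $\delta\to0$; the theorem then follows with $\kappa(q,s,\delta)=c\,\kappa'(q,s,\delta)^q$.

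The core step is to apply Theorem~\ref{cor:appl} with $\sigma=q'$ to the test function $\bfxi$, taking $\bfH=\bfH_\bfxi$ (so that indeed $\partial_t\bfxi=\Div\bfH_\bfxi$ in $D'_{\Div}(Q)$), with parameters $m_0\gg1$ and $\gamma>0$ still free. This produces a level $\lambda\in[2^{m_0}\gamma,2^{2m_0}\gamma]$ and a solenoidal truncation $\bfxi_\lambda$ with $\|\nabla\bfxi_\lambda\|_\infty\le c\lambda$, $\dashint_Q|\nabla\bfxi_\lambda|^{q'}\dxt\le c$, a bad set $\mathcal{O}_\lambda$ of relative measure $|\mathcal{O}_\lambda|/|Q|\lesssim(m_0\lambda^{q'})^{-1}$, and the time-derivative estimate~(d). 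Split $L(\bfxi)=L(\bfxi_\lambda)+L(\bfxi-\bfxi_\lambda)$. For $L(\bfxi_\lambda)$ I mollify $\bfxi_\lambda$ into $C^\infty_{0,\Div}$ without increasing the Lipschitz constant and use the almost-solution property \eqref{eq:almAStokespara}, getting $|L(\bfxi_\lambda)|\lesssim\delta\lambda\,\dashint_{2\tilde Q}|\ep(\bfv)|\dxt\lesssim\delta\lambda$. For the elliptic part of $L(\bfxi-\bfxi_\lambda)$ the integrand vanishes off $\mathcal{O}_\lambda$, so H\"older with exponents $qs$, $q'$ and $qs/(s-1)$ (admissible precisely because $s>1$) together with the smallness of $|\mathcal{O}_\lambda|$ gives $\lesssim(m_0\lambda^{q'})^{-(s-1)/(qs)}$. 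For the time part $\dashint_Q\bfv\cdot\partial_t(\bfxi-\bfxi_\lambda)\dxt$, I feed $\bfphi=\bfv$ (suitably mollified) into part~(d), which uses only $\bfv\in W^{1,q}$ and yields $\lesssim c(\kappa)\int\chi_{\mathcal{O}_\lambda}|\nabla\bfv|^q\dxt+\kappa$, and $\int\chi_{\mathcal{O}_\lambda}|\nabla\bfv|^q\dxt\le|\mathcal{O}_\lambda|^{1-1/s}\|\nabla\bfv\|_{qs}^q\lesssim(m_0\lambda^{q'})^{-(1-1/s)}$. Altogether $|L(\bfxi)|\lesssim\delta\lambda+(m_0\lambda^{q'})^{-(s-1)/(qs)}+c(\kappa)(m_0\lambda^{q'})^{-(1-1/s)}+\kappa$, and one fixes, in this order, $\kappa$ small (last term), then $m_0$ large (middle terms, since $m_0\lambda^{q'}\ge m_0(2^{m_0}\gamma)^{q'}\to\infty$), then $\delta$ small (first term, as $\lambda\le2^{2m_0}\gamma$ is by then fixed), which proves $\kappa'(q,s,\delta)\to0$.

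I expect the time part $\dashint_Q\bfv\cdot\partial_t(\bfxi-\bfxi_\lambda)\dxt$ to be the main obstacle: it has no stationary analogue, and controlling it is exactly why one needs the approximation of the distributional time derivative of the Lipschitz truncation provided by Theorem~\ref{cor:appl}(d), with the crucial feature that $\partial_t(\bfxi-\bfxi_\lambda)$ pairs only with $\chi_{\mathcal{O}_\lambda}\nabla\bfphi$ — so the pairing with $\bfv$ already makes sense for $\bfv\in W^{1,q}$ and inherits the smallness of $|\mathcal{O}_\lambda|$, rather than requiring a crude duality bound that would force $\bfv\in W^{1,q'}$. A secondary nuisance, as always with Lipschitz truncations, is the bookkeeping of the nested cubes $\tfrac12Q,\tfrac14Q,\dots$, which is what forces the $2Q$/$\tilde Q$ formulation in the statement and a concluding scaling step to transfer the estimate back to $Q$.
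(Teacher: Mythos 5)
Your architecture coincides with the paper's: Lemma~\ref{lem:varineqS} reduces everything to bounding the dual functional, Theorem~\ref{cor:appl} applied with $\sigma=q'$ and $\bfH=\bfH_\bfxi$ truncates the test function, the elliptic part of the difference is handled by H\"older against the smallness of the set $\set{\bfxi\neq\bfxi_\lambda}$, and --- exactly as you emphasize --- part~(d) of Theorem~\ref{cor:appl} is the new ingredient that controls the pairing of $\bfv$ with $\partial_t(\bfxi-\bfxi_\lambda)$. Your normalization/Legendre-transform presentation of the duality step and your order of limits (first $\kappa$, then $m_0$, then $\delta$) also match the paper's choices in substance.

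There is, however, one concrete gap: the localization. The truncation $\bfxi_\lambda$ produced by Theorem~\ref{cor:appl} lives on $4Q$ and lies in $L^\infty(4J;W^{1,\infty}_{0,\Div}(4B))$, but it is not supported in $2Q$, so it is not an admissible test function in \eqref{eq:almAStokespara}; mollification cannot repair this, since shrinking the support requires a cutoff and a cutoff destroys $\Div\bfxi_\lambda=0$. Likewise, part~(d) only accepts $\bfphi$ compactly supported in a fraction of $Q$, so ``$\bfphi=\bfv$ suitably mollified'' is not admissible either. The paper's proof spends most of its length on exactly this point: it tests the almost-solution property with $\eta\bfxi_\lambda-\Bog_{2B\setminus B}(\nabla\eta\,\bfxi_\lambda)$ (and the analogous expression for $\bfxi-\bfxi_\lambda$), which restores both the support and the solenoidality, and it feeds $\eta\bfv$ together with an adjoint-Bogovskii correction into \eqref{eq:xi5}. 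This generates the additional terms $I,II,III_1,\dots,III_4$ in which $\partial_t$ or $\nabla$ hits $\eta$; controlling them requires normalizing $\bfv$ to have zero spatial mean on the annulus $2B\setminus B$ (see \eqref{eq:meanv}) so that Poincar\'e converts $|\bfv/r|$ into $|\nabla\bfv|$, as well as the continuity of $\nabla\Bog^{*}_{2B\setminus B}$ on $L^{qs}$. None of these terms is hard individually, but they simply do not appear in your splitting $L(\bfxi)=L(\bfxi_\lambda)+L(\bfxi-\bfxi_\lambda)$, and without the Bogovskii corrections the two pieces you do write down cannot legitimately be estimated by the hypotheses you invoke.
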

\begin{remark}
From the proof of Theorem \ref{Astokes} we gain the following stability result choosing $p=q s=q$.
\begin{align*}
 \dashint_{Q} \Big|\frac{\bfw}{r}\Big|^p\dxt+\dashint_{Q}
    |\nabla\bfw|^p\dxt\leq c\,\dashint_{2\tilde{Q}}
    |\nabla\bfv|^{p}\dxt.
  \end{align*}
  Indeed $\kappa$ stays bounded if $s\rightarrow 1$.
\end{remark}

\begin{proof}
  Let $\bfw$ be defined as in~\eqref{eq:vH}.  Combining Poincar{\'e}'s inequality with Lemma \ref{lem:varineqS}
  and~~\eqref{eq:vH} shows
  \begin{align}
    \label{eq:Liptrunvib}
    \begin{aligned}
   \dashint_{Q} &\Big|\frac{\bfw}{r}\Big|^q\dxt+\dashint_{Q}
      |\nabla\bfw|^q\dxt\\& \leq c\, \sup_{\bfxi \in
        C^\infty_{0,\Div}(Q)} \bigg[ \dashint_Q \mathcal{A}(\ep(\bfv),
      \ep(\bfxi))\dxt-\dashint_Q \bfv\cdot \partial_t\bfxi\dxt \\
    &\qquad\qquad\qquad\qquad\qquad\qquad- \dashint_Q \Big(\abs{\nabla \bfxi}^{q'}+|\bfH_\bfxi|^{q'}\Big)\dxt\bigg].
      \end{aligned}
  \end{align}
  In the following let us fix $\bfxi \in C^\infty_{0,\Div}(Q)$. Let
  $$\gamma:= \bigg(\,\dashint_Q \abs{\nabla \bfxi}^{q'}\dxt+\dashint_Q \abs{ \bfH_\bfxi}^{q'}\dxt\bigg)^{\frac 1{q'}}$$
  and $m_0 \in \setN$, $m_0\gg1$. Due to Theorem~\ref{cor:appl} applied
  with $\sigma=q'$ we find $\lambda \in [2^{m_0}\gamma, 2^{2m_0} \gamma]$ and
  $\bfxi_\lambda \in L^\infty(4J;W^{1,\infty}_{0,\Div}(4B))$ such that
  \begin{align}
    \label{eq:xi1}
    \norm{\nabla \bfxi_\lambda}_{L^\infty(4Q)} &\leq c\, \lambda,
    \\
    \label{eq:xi2}
    \lambda^{q'} \frac{\mathcal{L}^{d+1}\left(2Q\cap\set{\bfxi_\lambda \not=
          \bfxi}\right)}{\abs{Q}} &\leq \frac{c}{m_0} \bigg(\dashint_Q
    \abs{\nabla \bfxi}^{q'}\dxt+\dashint_Q\abs{ \bfH_\bfxi}^{q'}\dxt\bigg),
    \\
    \label{eq:xi3}
    \dashint_{4Q} \abs{\bfxi_\lambda}^{q'} \dxt &\leq c
    \bigg(\dashint_Q \abs{\bfxi}^{q'} \dxt+\dashint_Q r^{q'}\abs{ \bfH_\bfxi}^{q'}\dxt\bigg),\\
       \label{eq:xi4}
    \dashint_{4Q} \abs{\nabla \bfxi_\lambda}^{q'} \dxt &\leq c
    \bigg(\dashint_Q \abs{\nabla \bfxi}^{q'} \dxt+\dashint_Q \abs{ \bfH_\bfxi}^{q'}\dxt\bigg).
    \end{align}
  Note that $\bfxi$ can be extended by 0 to $4Q$ thus the equation
  \begin{align*}
  \partial_t\bfxi=\Div\Bog_B(\partial_t\bfxi)=:\Div\bfH_{\bfxi}
  \end{align*}
  holds on $4Q$ by the properties of $\Bog_B$ (since $\bfH_\bfxi$ can be extended as well).  Theorem~\ref{cor:appl} \ref{itm:4}
      implies that $\partial_t(\bfxi-\bfxi_\lambda)\in L^{q'}(2J,W^{-1,q'}(2B))$ and
      \begin{align}\label{eq:xi5}
      \begin{aligned}
   \int_{2J} &\langle\partial_t(\bfxi-\bfxi_\lambda),\bfphi\rangle\dt\\
&   \leq c(\kappa)\int_{2Q} \chi_{\set{\bfxi\neq\bfxi_\lambda}}|\nabla\bfphi|^{q}\dxt+\kappa \bigg(\int_{Q}\abs{\nabla \bfxi}^{q'}+ \abs{ \bfH_\bfxi}^{q'}\dxt\bigg)
\end{aligned}
   \end{align}
   for all $\bfphi\in W^{1,q}_0(2Q)$.
  We calculate for $\eta\in C^\infty_0(2Q)$ with $\eta\equiv1$ on $Q$, $|\nabla^k\eta|\leq cr^{-k}$ and $|\partial_t\nabla^{k-1}\eta|\leq cr^{-(k+1)}$ ($k=1,2$)
  \begin{align*}
    \dashint_Q &\mathcal{A}(\ep(\bfv), \ep(\bfxi))\dxt -\dashint_{Q} \bfv\cdot \partial_t\bfxi\dxt \\
    &=2^{d+2}\dashint_{2Q} \mathcal{A}\big(\ep(\bfv), \ep(\eta\bfxi-\Bog_{2B\setminus B}(\nabla\eta\bfxi))\big)\dxt -\dashint_{2Q} \bfv\cdot \partial_t\big(\eta\bfxi-...\big)\dxt \\
     &=2^{d+2}\bigg(\dashint_{2Q} \mathcal{A}\big(\ep(\bfv), \ep(\eta\bfxi_\lambda-\Bog_{2B\setminus B}(\nabla\eta\bfxi_\lambda))\big)\dxt +\dashint_{2Q} \partial_t\bfv\cdot \big(\eta\bfxi_\lambda-...\big)\dxt \bigg)\\
 &+2^{d+2}\dashint_{2Q} \mathcal{A}\big(\ep(\bfv), \ep(\eta(\bfxi-\bfxi_\lambda)-\Bog_{2B\setminus B}(\nabla\eta(\bfxi-\bfxi_\lambda)))\big)\dxt\\
 &+2^{d+2}\dashint_{2Q} \partial_t\bfv\cdot \big(\eta(\bfxi-\bfxi_\lambda)-\Bog_{2B\setminus B}(\nabla\eta(\bfxi-\bfxi_\lambda))\big)\dxt\\
    &=: 2^{d+2}(I + II+III).
  \end{align*}
  Note that the time-derivative of $\bfv$ exists in the $W^{-1,\infty}_{\Div}$-sense as a consequence of (\ref{eq:almAStokespara}). Therefore all terms are well-defined by the properties of $\bfxi_\lambda$.
  We have the following inequality on account of the continuity properties of $\nabla\Bog$ on $L^p$-spaces, \eqref{eq:xi3},~\eqref{eq:xi4} and Poincar\'e's  inequality (we set $\tilde{\bfxi_\lambda}:=\bfxi-\bfxi_\lambda$):
  \begin{align}\label{eq:anew}
  \begin{aligned}
 \dashint_{2Q}|\nabla\bfPsi_\lambda|^{q'}\dxt &:=\dashint_{2Q}
    \abs{\nabla(\eta\tilde{\bfxi}_\lambda)-\nabla\Bog_{2B\setminus B}( \nabla\eta\tilde{\bfxi}_\lambda)}^{q'}\dxt\\&\leq c\,\dashint_{2Q}
    \abs{\nabla \tilde{\bfxi}_\lambda}^{q'}\dxt+c\, \dashint_{2Q}
    \Big|\frac{ \tilde{\bfxi}_\lambda}{r}\Big|^{q'}\dxt\\
    &\leq  c\,\dashint_{Q}
    \abs{\nabla \bfxi}^{q'}\dxt+c\,\dashint_{Q}
    \Big|\frac{ \bfxi}{r}\Big|^{q'}\dxt+c\,\dashint_{Q}
    \abs{\bfH_\bfxi}^{q'}\dxt\\
     &\leq  c\,\dashint_{Q}
    \abs{\nabla \bfxi}^{q'}\dxt+c\,\dashint_{Q}
    \abs{\bfH_\bfxi}^{q'}\dxt.
    \end{aligned}
  \end{align}
  %We consider the terms
  %\begin{align*}
  %II_1&:=2^{d+1}\dashint_{2Q} \mathcal{A}\big(\ep(\bfv), \ep(\eta\tilde{\bfxi}_\lambda)\big)\chi_{\set{\bfxi\neq\bfxi_\lambda}}\dxt,\\
  % II_2&:=2^{d+1}\dashint_{2Q} \mathcal{A}\big(\ep(\bfv), \ep(\Bog_{2B\setminus B}(\nabla\eta(\bfxi-\bfxi_\lambda)))\big)\dxt.
  %\end{align*}
 Young's inequality for an appropriate choice of $\varepsilon>0$ implies together with (\ref{eq:xi3}) and (\ref{eq:xi4})
  \begin{align*}
    II
       &\leq c(\varepsilon)\, \dashint_{2Q} \abs{\ep(\bfv)}^q \chi_{\set{\bfxi \not=
        \bfxi_\lambda}} \dxt + \varepsilon\, \dashint_{2Q}
    \abs{\nabla \bfPsi_\lambda}^{q'}\dxt\\
          &\leq c\, \dashint_{2Q} \abs{\ep(\bfv)}^q \chi_{\set{\bfxi \not=
        \bfxi_\lambda}} \dxt + \frac{1}{3}\, \dashint_{Q}
    \abs{\nabla \bfxi}^{q'}+\abs{\bfH_\bfxi}^{q'}\dxt\\
    &=: II_{1} + II_{2},
  \end{align*}
  where $c$ depends on $\mathcal{A}$, $q$ and
  $q'$. With H{\"o}lder's inequality we gain
  \begin{align*}
    II_{1} &\leq c \bigg(\dashint_{2Q}  \abs{\nabla \bfv}^{qs}
    \dxt\bigg)^{\frac 1s} \bigg( \frac{\mathcal{L}^{d+1}\left(2Q\cap\set{\bfxi_\lambda
          \not= \bfxi}\right)}{\abs{Q}} \bigg)^{1 - \frac 1s}.
  \end{align*}
  If follows from \eqref{eq:xi2}, by the choice of~$\gamma$ and
  $\lambda \geq \gamma$ that
  \begin{align}\label{eq:kappam0}
    \frac{\mathcal{L}^{d+1}\left(2Q\cap\set{\bfxi_\lambda \not=
          \bfxi}\right)}{\abs{Q}} &\leq \frac{c \gamma^{q'}}{m_0
      \lambda^{q'}} \leq \frac{c}{m_0}.
  \end{align}
  Thus
  \begin{align*}
    II_{1} &\leq c \bigg(\dashint_{2Q} \abs{\nabla
      \bfv}^{qs}\dxt\bigg)^{\frac 1s} \bigg( \frac{c}{m_0} \bigg)^{1 -
      \frac 1s}.
  \end{align*}
  We choose $m_0$ so large such that
  \begin{align*}
    II_{1} &\leq \frac{\kappa}{3} \bigg(\dashint_{2Q} 
    \abs{\nabla \bfv}^{qs}\dxt\bigg)^{\frac 1s}.
  \end{align*}

  Since $\partial_t(\bfxi-\bfxi_\lambda)\in L^{q'}(2J,W^{-1,q'}(2B))$ we can write $III$ as
  \begin{align*}
  III&=\dashint_{2Q} \bfv\cdot \partial_t\eta(\bfxi-\bfxi_\lambda)\dxt+\dashint_{2Q} \eta\bfv\cdot \partial_t(\bfxi-\bfxi_\lambda)\dxt\\
  &-\dashint_{2Q} \bfv\cdot \Bog_{2B\setminus B}(\partial_t\nabla\eta(\bfxi-\bfxi_\lambda))\dxt\\
  &-\dashint_{2Q} \Bog_{2B\setminus B}^*(\bfv)\nabla\eta\cdot\partial_t(\bfxi-\bfxi_\lambda)\dxt\\
  &=:III_1+III_2+III_3+III_4.
  \end{align*}
The \Bogovskii-operator is continuous from $L^2_0\rightarrow L^2$. Hence
its dual (in the sense of $L^2$-duality) is continuous from $L^2\rightarrow L^2_0$.
%    The formulation in (\ref{eq:almAStokespara}) does not change if we subtract terms which are constant in space from $\bfv$. So we can assume 
%\begin{align}\label{eq:meanv}    
%    \dashint_{2B\setminus B}\bfv(t)\dx=0\quad\text{for a.e.}\quad t\in 2J.
%\end{align}    
     Therefore $\Bog_{2B\setminus B}^*(\bfv)$ is well-defined. We consider the four terms separately and obtain
    for the first one
  \begin{align*}
  III_1&\leq \,c\,\dashint_{2I}\dashint_{2B\setminus B} \chi_{\set{\bfxi_\lambda\neq\bfxi}}\Big|\frac{\bfv}{r}\Big|\Big|\frac{\bfxi-\bfxi_\lambda}{r}\Big|\dxt\\
  &\leq c(\varepsilon)\dashint_{2I}\dashint_{2B\setminus B} \Big|\frac{\bfv}{r}\Big|^q\chi_{\set{\bfxi_\lambda\neq\bfxi}}\dxt+\varepsilon \dashint_{2Q} \Big|\frac{\bfxi-\bfxi_\lambda}{r}\Big|^{q'}\dxt\\
&=:c(\varepsilon)III_{11}+\varepsilon III_{12}.
  \end{align*}
Poincar\'{e}'s inequality and Young's inequality yield
    \begin{align*}
    III_{11} &\leq c \bigg(\dashint_{2I}\dashint_{2B\setminus B}  \Big|\frac{\bfv}{r}\Big|^{qs}
    \dxt\bigg)^{\frac 1s} \bigg( \frac{\mathcal{L}^{d+1}\left(2Q\cap\set{\bfxi_\lambda
          \not= \bfxi}\right)}{\abs{Q}} \bigg)^{1 - \frac 1s}\\
          &\leq c \bigg(\dashint_{2Q}  \abs{\nabla\bfv}^{qs}
    \dxt\bigg)^{\frac 1s} \bigg( \frac{\mathcal{L}^{d+1}\left(2Q\cap\set{\bfxi_\lambda
          \not= \bfxi}\right)}{\abs{Q}} \bigg)^{1 - \frac 1s}
  \end{align*}
  Arguing as done for the term $II_1$ implies
    \begin{align*}
    III_{11} &\leq \frac{\kappa}{12} \bigg(\dashint_{2Q} 
    \abs{\nabla \bfv}^{qs}\dxt\bigg)^{\frac 1s}.
  \end{align*}
  Moreover, we gain from (\ref{eq:xi3}) and Poincar\'{e}'s inequality
  \begin{align*}
  III_{12}&\leq c\,\dashint_{Q} \Big|\frac{\bfxi}{r}\Big|^{q'}\dxt+c\,\dashint_{Q}|\bfH_\bfxi|^{q'}\dxt\\
  &\leq c\,\dashint_{Q} |\nabla\bfxi|^{q'}\dxt+c\,\dashint_{Q}|\bfH_\bfxi|^{q'}\dxt
  \end{align*}
  and finally
    \begin{align*}
  III_1
  &\leq \frac{\kappa}{12} \bigg(\dashint_{2Q} 
    \abs{\nabla \bfv}^{qs}\dxt\bigg)^{\frac 1s}+\frac{1}{12}\bigg(\int_{Q}\abs{\nabla \bfxi}^{q'}+ \abs{ \bfH_\bfxi}^{q'}\dxt\bigg).
   \end{align*}
    The formulation in (\ref{eq:almAStokespara}) does not change if we subtract terms which are constant in space from $\bfv$ (note that $(\partial_t\bfxi)=0$ for every $t$ due to $\partial_t\bfxi(t,\cdot)\in C^\infty_{0,\Div}(B)$). So we can assume 
\begin{align}\label{eq:meanv}    
    \dashint_{2B\setminus B}\bfv(t)\dx=0\quad\text{for a.e.}\quad t\in 2J.
\end{align}    
  As a consequence of (\ref{eq:xi5}), (\ref{eq:meanv}) and Poincar\'{e}'s inequality we obtain similarly as for $III_1$
  \begin{align*}
  III_2&\leq c(\varepsilon)\int_{2Q} \chi_{\set{\bfxi\neq\bfxi_\lambda}}|\nabla(\eta\bfv)|^{q}\dxt+\varepsilon\bigg(\int_{Q}\abs{\nabla \bfxi}^{q'}+ \abs{ \bfH_\bfxi}^{q'}\dxt\bigg)\\
  &\leq \frac{\kappa}{12} \bigg(\dashint_{2Q} 
    \abs{\nabla \bfv}^{qs}\dxt\bigg)^{\frac 1s}+\frac{1}{12}\bigg(\int_{Q}\abs{\nabla \bfxi}^{q'}+ \abs{ \bfH_\bfxi}^{q'}\dxt\bigg)
   \end{align*}
  Taking into account continuity properties of the \Bogovskii -operator from $L^q_0\rightarrow W^{1,q}_0$ we can estimate $III_3$ via (we use again (\ref{eq:meanv}) and Poincar\'{e}'s inequality)
  \begin{align*}
  III_{3}&\leq \bigg(\dashint_{2Q}\dashint_{2B\setminus B}\Big|\frac{\bfv}{r}\Big|^{qs}\dxt\bigg)^{\frac{1}{qs}}\bigg(\dashint_{2Q}r^{(qs)'}\Big|\Bog_{2B\setminus B}(\partial_t\nabla\eta(\bfxi-\bfxi_\lambda))\big)\Big|^{(qs)'}\dxt\bigg)^{\frac{1}{(qs)'}}\\
  &\leq \,c\bigg(\dashint_{2Q}|\nabla\bfv|^{qs}\dxt\bigg)^{\frac{1}{qs}}\bigg(\,\dashint_{2Q}r^{2(qs)'}\big|\partial_t\nabla\eta(\bfxi-\bfxi_\lambda)\big|^{(qs)'}\dxt\bigg)^{\frac{1}{(qs)'}}\\
   &\leq \,c\bigg(\dashint_{2Q}|\nabla\bfv|^{qs}\dxt\bigg)^{\frac{1}{qs}}\bigg(\,\dashint_{2Q}\chi_{\set{\bfxi\neq\bfxi_\lambda}}\Big|\frac{\bfxi-\bfxi_\lambda}{r}\Big|^{(qs)'}\dxt\bigg)^{\frac{1}{(qs)'}}.
  \end{align*}
We gain from Young's inequality for every $\varepsilon>0$
    \begin{align*}
  III_{3}
   &\leq \,\frac{\varepsilon^{q}}{12}\bigg(\dashint_{2Q}|\nabla\bfv|^{qs}\dxt\bigg)^{\frac 1s}+c\,\varepsilon^{-q'}\bigg(\dashint_{2Q}\chi_{\set{\bfxi\neq\bfxi_\lambda}}\Big|\frac{\bfxi-\bfxi_\lambda}{r}\Big|^{(qs)'}\dxt\bigg)^{\frac{q'}{(qs)'}}\\
   &=:\frac{\varepsilon^{q}}{12}III_{31}+c\varepsilon^{-q'}III_{32}.
  \end{align*} 
   It holds due to H\"older's inequality, Poincar\'{e}'s inequality, \eqref{eq:xi2}, \eqref{eq:xi4} and \eqref{eq:kappam0} for $m_0$ large enough
   \begin{align*}
   III_{32}&\leq \bigg( \frac{\mathcal{L}^{d+1}\left(2Q\cap\set{\bfxi_\lambda
          \not= \bfxi}\right)}{\abs{Q}} \bigg)^{1 - \frac 1s}\bigg(\dashint_{2Q}\Big|\frac{\bfxi-\bfxi_\lambda}{r}\Big|^{q'}\dxt\bigg)\\
&\leq\,c\, \bigg( \frac{\mathcal{L}^{d+1}\left(2Q\cap\set{\bfxi_\lambda
          \not= \bfxi}\right)}{\abs{Q}} \bigg)^{1 - \frac 1s}\bigg(\dashint_{4Q}\Big|\frac{\bfxi-\bfxi_\lambda}{4r}\Big|^{q'}\dxt\bigg)\\
&\leq \,c\,\bigg( \frac{\mathcal{L}^{d+1}\left(2Q\cap\set{\bfxi_\lambda
          \not= \bfxi}\right)}{\abs{Q}} \bigg)^{1 - \frac 1s}\bigg(\dashint_{4Q}\big(|\nabla\bfxi|^{q'}+|\nabla\bfxi_\lambda|^q\big)\dxt\bigg)\\
          &\leq \frac{\kappa}{12 c}\bigg(\int_{Q}\abs{\nabla \bfxi}^{q'}+ \abs{ \bfH_\bfxi}^{q'}\dxt\bigg).
   \end{align*}
   Choosing $\varepsilon:=\kappa^{1/q'}$ implies
     \begin{align*}
  III_3
  &\leq \frac{\kappa}{12 } \bigg(\dashint_{2Q} 
    \abs{\nabla \bfv}^{qs}\dxt\bigg)^{\frac 1s}+\frac{1}{12}\bigg(\int_{Q}\abs{\nabla \bfxi}^{q'}+ \abs{ \bfH_\bfxi}^{q'}\dxt\bigg).
   \end{align*}
 By (\ref{eq:xi5}) and (\ref{eq:kappam0}) we have for $m_0$ large enough
        \begin{align*}
  III_4
  &\leq  \,c\,\dashint_{2Q}\chi_{\set{\bfxi\neq\bfxi_\lambda}}
    \abs{\nabla(\nabla\eta\Bog_{2B\setminus B}^*(\bfv))}^{q}\dxt+\frac{1}{12}\bigg(\int_{Q}\abs{\nabla \bfxi}^{q'}+ \abs{ \bfH_\bfxi}^{q'}\dxt\bigg)\\
     &\leq  \,\varepsilon\,\bigg(\dashint_{2Q}\abs{\nabla(\nabla\eta\Bog_{2B\setminus B}^*(\bfv))}^{sq}\dxt\bigg)^{\frac{1}{s}}+\frac{1}{12}\bigg(\int_{Q}\abs{\nabla \bfxi}^{q'}+ \abs{ \bfH_\bfxi}^{q'}\dxt\bigg)\\
    &=:\varepsilon III_{41}+\frac{1}{12}III_{42}.
   \end{align*}
Due to the continuity of $\Bog(\Div(\cdot))$ on $L^p$ for any $1<p<\infty$ (see \cite{Ga1}[III.3, Theorem 3.3] and \cite[Thm. 3.7]{BrCi} for the \Bogovskii-operator and negative norms) we have continuity of $\nabla \Bog^\ast$ as well. 
This, Poincar\'e's inequality (note that $\Bog_{2B\setminus B}^*(\bfv)\in L^p_0(2B\setminus B)$) and \eqref{eq:meanv} yield
   \begin{align*}
   III_{41}&\leq \,c\,\bigg(\dashint_{2Q}\Big|\frac{\Bog_{2B\setminus B}^*(\bfv)}{r^2}\Big|^{sq}\dxt+\dashint_{2Q}\Big|\frac{\nabla\Bog_{2B\setminus B}^*(\bfv)}{r}\Big|^{sq}\dxt\bigg)^{\frac{1}{s}}\\
   &\leq \,c\,\bigg(\dashint_{2Q}\Big|\frac{\nabla\Bog_{2B\setminus B}^*(\bfv)}{r}\Big|^{sq}\dxt\bigg)^{\frac{1}{s}}
    \leq \,c\,\bigg(\dashint_{2I}\dashint_{2B\setminus B}\Big|\frac{\bfv}{r}\Big|^{sq}\dxt\bigg)^{\frac{1}{s}}\\
    &\leq \,c\,\bigg(\dashint_{2Q}|\nabla \bfv|^{sq}\dxt\bigg)^{\frac{1}{s}}
   \end{align*}
   and hence for $\varepsilon:=\kappa/12 c$
    \begin{align*}
  III_4
  &\leq \frac{\kappa}{12} \bigg(\dashint_{2Q} 
    \abs{\nabla \bfv}^{qs}\dxt\bigg)^{\frac 1s}+\frac{1}{12}\bigg(\int_{Q}\abs{\nabla \bfxi}^{q'}+ \abs{ \bfH_\bfxi}^{q'}\dxt\bigg).
   \end{align*}
 Plugging the estimates for $III_1$-$III_4$ together we see
    \begin{align*}
  III
  &\leq \frac{\kappa}{3} \bigg(\dashint_{2Q} 
    \abs{\nabla \bfv}^{qs}\dxt\bigg)^{\frac 1s}+\frac{1}{3}\bigg(\int_{Q}\abs{\nabla \bfxi}^{q'}+ \abs{ \bfH_\bfxi}^{q'}\dxt\bigg).
   \end{align*}
  Since $\bfv$ is an almost $\mathcal{A}$-Stokes solution and
  $\norm{\nabla\bfxi_\lambda}_{\infty} \leq c\, \lambda \leq c\, 2^{m_0}
  \gamma$ we have
  \begin{align*}
    |I| &\leq \delta\,\dashint_{2 \tilde{Q}} \abs{\nabla \bfv}\dxt\,
    \|\nabla \bfxi_\lambda\|_{\infty,2Q}\\
    & \leq \delta\,\bigg( \dashint_{2 \tilde{Q}} \abs{\nabla \bfv}^{qs}\dxt\bigg)^{\frac{1}{qs}}
  \, c\,2^{m_0} \gamma.
  \end{align*}
  We apply Young's inequality and Jensen's inequality to gain
  \begin{align*}
    |I| &\leq  \delta 2^{m_0}  c\bigg(\dashint_{2 \tilde{Q}} 
    \abs{\nabla \bfv}^{qs}\dxt \bigg)^{\frac 1s}+\delta 2^{m_0}  c\gamma^{q'}
    \\
    &\leq \delta 2^{m_0}  c\bigg(\dashint_{2 \tilde{Q}} 
    \abs{\nabla \bfv}^{qs}\dxt \bigg)^{\frac 1s} + \delta 2^{m_0}c
    \bigg(\dashint_{Q} \abs{\nabla \bfxi}^{q'}\dxt+\dashint_{Q} \abs{\bfH_\bfxi}^{q'}\dxt\bigg).
  \end{align*}
  Now, we choose $\delta>0$ so small such that $\delta 2^{m_0} c\leq
  \kappa/3$. Thus
  \begin{align*}
    |I| &\leq \frac{\kappa}{3} \bigg(\dashint_{2 \tilde{Q}}
    \abs{\nabla \bfv}^{qs}\dxt \bigg)^{\frac 1s} +
    \frac{1}{3}\bigg(\dashint_{Q} \abs{\nabla \bfxi}^{q'}\dxt+\dashint_{Q} \abs{\bfH_\bfxi}^{q'}\dxt\bigg).
  \end{align*}
  Combining the estimates for $I$, $II$ and $III$ we have established
  \begin{align*}
    \dashint_{2Q} \!\mathcal{A}&(\ep(\bfv) ,\ep(\bfxi))\dxt
     -\dashint_{Q} \bfv\cdot \partial_t\bfxi\dxt\\&\leq \kappa \bigg(\dashint_{2 \tilde{Q}} \abs{\nabla
      \bfv}^{qs}\dxt\bigg)^{\frac 1s} + \dashint_{Q}
    \abs{\nabla \bfxi}^{q'}\dxt+\dashint_{Q} \abs{\bfH_\bfxi}^{q'}\dxt.
  \end{align*}
Inserting this in~\eqref{eq:Liptrunvib} shows the claim
\end{proof}

\subsection*{Acknowledgement}
\begin{itemize}
\item The work of the author was partially supported by Leopoldina (German National Academy of Science).
\item The author wishes to thank R. Farwig for very helpful advices regarding the $L^q$-theory for the Stokes system.
\end{itemize}
%\bibliographystyle{amsalpha}
%\bibliography{lars}

\begin{thebibliography}{[M]}
%\bibitem[AmN1]{AmN1} C. Amrouche, H. H. Nguyen (2011): New estimates for the $\Div$, $\curl$, $\grad$ operators and elliptic problems with $L^1$-data in the half-space. Applied Mathematics Letters 24, 697--702.
%\bibitem[AmN2]{AmN2} C. Amrouche, H. H. Nguyen (2011): New estimates for the $\Div-\curl-\grad$ operators and elliptic problems with $L^1$-data in the whole space and in the half-space. J. Differential Equations 250, 3150--3195.

%
%\bibitem[Ada]{Ad}
%R.~A. Adams (1975):
%Sobolev spaces.
%Pure Appl.\ Math.\ 65,
%Academic Press, Inc., New York.

\bibitem{ADN1} S. Agmon, A. Douglis, L. Nirenberg (1959): Estimates near the boundary for solutions of elliptic partial differential equations satisfying
general boundary conditions. I, Comm. Pure Appl. Math. 12, 623--727.
\bibitem{ADN2} S. Agmon, A. Douglis, L. Nirenberg (1964): Estimates near the boundary for solutions of elliptic partial differential equations satisfying
general boundary conditions. II, Comm. Pure Appl. Math. 17, 35--92.

\bibitem{AmGi} C. Amrouche, V. Girault (1994): Decomposition of vector spaces and application to the Stokes problem in arbitrary dimension,
Czechoslovak Math. J. 44, 109--140.
\bibitem{AM} G. Astarita and G. Marrucci (1974): Principles of non-Newtonian fluid mechanics. McGraw-Hill, London-New York.
\bibitem{AF} E. Acerbi, N. Fusco (1988): An approximation lemma for $W^{1,p}$ functions, Material
instabilities in continuum mechanics (Edinburgh, 1985-1986),
Oxford Sci. Publ., Oxford Univ. Press, New York, 1988, pp. 1-5.
	\bibitem{BAH} R. Bird, R. Armstrong, O. Hassager (1987): Dynamics of polymeric liquids, volume 1: fluid mechanics (second edition). John Wiley.	
%\bibitem[BB]{BB} W. G. Burgers, J. M. Burgers (1935): First Report on Viscosity and Plasticity. Verh. Akad. Amsterdam (I) 15, Chapter V.
%\bibitem[B1]{B1} H. Beir\~{a}o da Veiga (2009): Navier--Stokes Equations with shear-thickening viscosity: regularity up to the boundary. J. Math. Fluid Mech. 11, 233--257.
%\bibitem[B2]{B2} H. Beir\~{a}o da Veiga (2009): Navier--Stokes equations with shear thinning viscosity: regularity up to the boundary. J. Math. Fluid Mech. 11, 258--273.
\bibitem{BKR} H. Beir\~{a}o da Veiga, P. Kaplick\'{y}, M. R\r{u}\v{z}i\v{c}ka (2011): Boundary Regularity of Shear Thickening Flows. J. Math. Fluid Mech. 13, 387--404. 
\bibitem{Bog}
  M.~E. Bogovski{\u\i} (1980):
  \newblock Solutions of some problems of vector analysis, associated with the
  operators ${\rm div}$\ and ${\rm grad}$.
  \newblock In {\em Theory of cubature formulas and the application of functional
    analysis to problems of mathematical physics (Russian)}, pages 5--40, 149.
  Akad. Nauk SSSR Sibirsk. Otdel. Inst. Mat., Novosibirsk.
%\bibitem[BoSo]{BoSo}  Borchers, Wolfgang; Sohr, Hermann On the equations rot v=g and  div u=f with zero boundary conditions. Hokkaido Math. J. 19 (1990), no. 1, 67--87.
   %\bibitem[BF]{BF1} M. Bildhauer, M. Fuchs (2003): Variants of the Stokes problem: the case of anisotropic potentials. J. Math. Fluid Mech. 5, 364--402.
  % \bibitem[BF2]{BF2} M. Bildhauer, M. Fuchs (2001): Partial regularity for variational integrals with $(s,\mu,q)$-growth. Calculus of Variations 13, 537-560.
  %	\bibitem[BF3]{BF3} M. Bildhauer, M. Fuchs (2009):	Variational integrals of splitting-type: higher integrability under general growth conditions. Ann. Math. Pura Appl. 188, 467-496. 	 	 	
  % \bibitem[BFZ]{BFZ} M. Bildhauer, M. Fuchs, X. Zhong (2005): A lemma on the higher integrability of functions with applications to the regularity theory of two-dimensional generalized Newtonian fluids. Manus. Math. 116(2), 135-156.
%\bibitem[Br]{Br} D. Breit: Analysis of generalized Navier-Stokes equations for stationary shear thickening flows. Nonlinear Analysis TMA 75 (2012) 5549--5560.
\bibitem{BrCi} D. Breit, A. Cianchi (2015): Negative  Orlicz-Sobolev norms and strongly nonlinear systems in fluid mechanics. J. Diff. Eq. 259, 48--83.
\bibitem{BrDF} D. Breit, L. Diening, M. Fuchs (2012): Solenoidal Lipschitz truncation and applications in fluid mechanics. J. Diff. Eq. 253, 1910--1942.
\bibitem{BrDS} D. Breit, L. Diening, S. Schwarzacher (2013): Solenoidal Lipschitz truncation for parabolic PDEs. Math. Mod. Meth. Appl. Sci. 23, 2671--2700.
%\bibitem[BrF]{BrF} D. Breit, M. Fuchs (2011): The nonlinear Stokes problem with general potentials having superquadratic growth. J. Math. Fluid Mech. 13, 371-385.
%\bibitem[BrVe]{BrVe} D. Breit, A. Verde (2011): Quasiconvex variational integrals in Orlicz-Sobolev spaces. To appear in Ann. Mat. Pura Appl., DOI: 10.1007/s10231-011-0222-1.
\bibitem{BreSco94}
S.~C. Brenner, L.~R. Scott (1994): The mathematical theory of
  finite element methods. Texts in Applied Mathematics, vol.~15,
  Springer-Verlag, New York.
%\bibitem[BrD]{BrD} D. Breit, L. Diening (2011): Sharp conditions for Korn inequalities in Orlicz spaces. Preprint 290, Saarland University.
%\bibitem[CrKr]{CrKr} D. Cruz-Uribe, M. Krbec (2002): Localization and extrapolation in Lorentz-Orlicz spaces. Function Spaces, Interpolation Theory and Related Topics, M. Cwikel et al. Eds., de Gruyter, Berlin, 273-284.
%\bibitem[CE]{CE} M. Chipot, L. C. Evans (1986): Linearization at infinity and Lipschitz estimates for certain problems in the calculus of variations. Proc. Royal Soc. Edinburgh, Sect. A, 102, 291--303.
\bibitem{ByWa} S.-S. Byun, L. Wang (2005): $L^p$ estimates for parabolic equations in Reifenberg
domains. J. Funct. Anal. 223, 44--85.
\bibitem{C} L.A. Caffarelli, X. Cabr\'e (1995): Fully Nonlinear Elliptic Equations.  American Mathematical Society Colloquium Publications 43, American Mathematical Society, Providence, RI, vi+104 pp.
\bibitem{DeG61} E. De Giorgi (1961): Frontiere orientate di misura minima, Seminario
di Matematica della Scuola Normale Superiore di Pisa, 1960-61,
Editrice Tecnico Scientica, Pisa.
%\bibitem{DK} L. Diening, P. Kaplicky (2012): $L^q$ theory for a generalized Stokes System. 
%Manus. Math. 141, Issue 1-2, 333--361.
\bibitem{DieR07}
L.~Diening, M.~R{\r u}{\v z}i{\v c}ka (2007): Interpolation operators in
  {O}rlicz~{S}obolev spaces. Num. Math. 107 no.~1, 107--129.
%\bibitem[De2]{De2} E. DeGiorgi (1957): Sulla differenziabilit\'{a} e l'analiticit\'{a} delle estremali degli integrali multipli regolari. Mem. Accad. Sci. Torino 3, 25--43.
%\bibitem[DMM]{DalMasoMurat} G.~Dal~Maso and F.~Murat (1998):
  %\newblock Almost everywhere convergence of gradients of solutions to nonlinear elliptic systems.
  %\newblock {\em Nonlinear Anal.}, 31(3-4), 405--412.
\bibitem{DLSV} L. Diening, D. Lengeler, B. Stroffolini, A. Verde (2012): Partial regularity for minimizers of quasiconvex functionals with general growth.
SIAM J. Math. Anal. 44, 3594--3616.
	\bibitem{DMS} L. Diening, J. Mal{\'e}k, M. Steinhauer (2008): On Lipschitz Truncations of Sobolev Functions (with Variable Exponent) and Their Selected Applications. ESAIM: Control, Optimisation and Calculus of Variations 14, 211--232.
%\bibitem[DiKrSu]{DiKrSu}	L. Diening, Ch. Kreuzer, E. S\"uli: Finite element approximation of steady flows of incompressible fluids with implicit power-law-like rheology. 2012. Submitted.
\bibitem{DSV} L. Diening, B. Stroffolini, A. Verde (2012): The $\varphi$--harmonic approximation and the regularity of $\varphi$--harmonic maps, J. Diff. Eq. 253, 1943--1958.
	\bibitem{DRW} L. Diening, M. R\r{u}\v{z}i\v{c}ka, J. Wolf (2010): Existence of weak solutions for unsteady motions of generalized Newtonian fluids. Ann. Sc. Norm. Sup. Pisa Cl. Sci. (5) Vol. IX, 1--46.
%\bibitem[E]{E} H. J. Eyring (1936): Viscosity, plasticity, and diffusion as example of absolute reaction rates. J. Chemical Physics 4, 283-291.
  % \bibitem[Fu1]{Fu1} M. Fuchs (2008): Regularity results for local minimizers of energies with general densities having superquadratic growth. to appear in Algebra i Analysis/ Preprint 217, Saarland University.
	%\bibitem[ELM]{ELM} L. Esposito, F. Leonetti, G. Mingione (2001): Sharp regularity for functionals with $(p,q$)-growth. Journal of Differential Equations 204, 5--55.
\bibitem{DM1}
F. Duzaar, G. Mingione, (2004): The p-harmonic approximation and the regularity of p-harmonic
maps. Calc. Var. \& PDE, 20 (2004), pp. 235--256.
\bibitem{DuzMin09h} F. Duzaar and G. Mingione (2009): Harmonic type approximation lemmas.
J. Math. Anal. Appl. 352, no. 1, 301--335.
\bibitem{DM05} F. Duzaar, G. Mingione (2005): Second order parabolic systems, optimal regularity, and singular
sets of solutions. Ann. Inst. Henri Poincar\'{e}, Anal. Non Lin\'{e}aire 22, 705--751.
\bibitem{DuMiSt} F. Duzaar, R. Mingione, K. Steffen (2011): Parabolic Systems
with Polynomial Growth and Regularity. Memoires of the American Mathematical Society 214, no. 1005.
\bibitem{Fa} R. Farwig, H. Kozono, H. Sohr (2005): An $L^q$-approach to Stokes and
Navier--Stokes equations in general domains.  Acta Math. 195, 21--53.
   \bibitem{Fu4} M. Fuchs (1996): On quasistatic Non-Newtonian fluids with power law. Math. Meth. Appl. Sci. 19, 1225--1232.
   \bibitem{Ga1} G. Galdi (1994): An introduction to the mathematical theory of the Navier-Stokes equations Vol. I, Springer Tracts in Natural Philosophy Vol. 38. Springer, Berlin-New York.
\bibitem{GaSiSo} G.~P. Galdi, C.~G. Simader, H. Sohr (2005): A class of solutions to stationary Stokes and Navier-Stokes
equations with boundary data in $W^{-1/q,q}$. Math. Ann. 331, 41--74.
\bibitem{Gi} Y. Giga (1981): Analyticity of the smeigroup generated by the Stokes operator in $L_r$ spaces. Math. Z. 178,  297--329.
	%\bibitem[Gi3]{Gi3} M. Giaquinta (1987): Growth conditons and regularity, a counterexample. Manus. Math. 59, 245--248.
   %\bibitem[Fu1]{Fu2} M. Fuchs (2008): A note on non-uniformly elliptic Stokes-type systems in two variables. J. Math. Fluid Mech. 12(2), 266-279 (2010).
  % \bibitem[Fu3]{Fu3} M. Fuchs (2009): Korn inequalities in Orlicz spaces.	Preprint 251, Saarland University.
%\bibitem[FuS]{FuS} M. Fuchs, G. Seregin (2007): A global nonlinear evolution problem for generalized Newtonian fluids: local initial regularity of the strong solution. Comp. and Math. with Appl. 53, 509--520.
  % \bibitem[Fu5]{Fu5} M. Fuchs (2009):	Local Lipschitz regularity of vector valued local minimizers of variational integrals with densities depending on the modulus of the gradient. to appear in Math. Nachrichten.
  % \bibitem[FGR]{FGR} M. Fuchs, J. Grotowski, J. Reuling (1996): On variational models for quasistatic Bingham fluids. Math. Meth. Appl. Sciences 19, 991-1015.
  % \bibitem[FO]{FO} M. Fuchs, V. Osmolovskij (1998): Variational integrals on Orlicz-Sobolev spaces. Z. Anal. Anw. 17, 393-415.	
%\bibitem[FB]{BF} M. Fuchs, M. Bildhauer (2011):	Compact embeddings of the space of functions with bounded logarithmic deformation. J. Math. Sciences 172(1), 165-183.
		%\bibitem[FMS]{FMS} J. Frehse, J. Mal{\'e}k, M. Steinhauer (2003): On analysis of steady flows of fluids with shear-dependent viscosity based on the Lipschitz truncation method. SIAM J. Math. Anal. 34 (5), 1064-1083 (electronic).
%\bibitem[FrS]{FrS} J. Frehse, G. Seregin (1998/1999): Regularity of solutions to variational problems of the deformation theory of plasticity with logarithmic hardening. Proc. St. Petersburg Math. Soc. 5, 184-222/ English Translation : Amer. Math. Soc. Transl. II 193, 127-152.
%\bibitem[FuS1]{FuS} M. Fuchs, G. Seregin (1999): Variational methods for fluids of Prandtl-Eyring type and plastic materials with logarithmic hardening. Math. Meth. Appl. Sciences 22, 317-351.
%\bibitem[FuS2]{FuS2} M. Fuchs, G. Seregin (1997): Some remarks on non-Newtonian fluids including nonconvex perturbations of the Bingham and Powell-Eyring model for viscoplastic fluids. Math. Models and Methods in Appl. Sciences Vol.7, No.3, 405-433.
%\bibitem[GMS]{gms} M. Giaquinta, G. Modica,  J. Sou\v{c}ek (1998): Cartesian Currents in the Calculus of Variations I. Springer, Berlin-Heidelber-New York.
%\bibitem[MSC]{msc}  H. Matthies, G. Strang, E. Christiansen (1978): The saddle point of a differential program, Energy Methods in Finite Element Analysis, volume dedicated to Professor Veubeke / Eds. R. Glowinski, E. Rodin, O. C. Zienkiewicz. Whiley, New York.
  % \bibitem[Ga2]{Ga2} G. Galdi (1994): An introduction to the mathematical theory of the Navier-Stokes equations Vol. II, Springer Tracts in Natural Philosophy Vol. 39. Springer, Berlin-New York.	
  %	\bibitem[Gi]{Gi} M. Giaquinta (1987): Growth conditons and regularity, a counterexample. Manus. Math. 59, 245-248.
  % \bibitem[Gi2]{Gi2} M. Giaquinta (1993) Introduction to regularity theory for nonlinear elliptic systems. Birkh{\"a}user Verlag,  Basel-Boston-Berlin.	
  %	\bibitem[Gi3]{Gi3} M. Giaquinta (1983): Multiple integrals in the calcules of variations an nonlinear elliptic systems. Ann. Math. Studies 105, Princeton University Press, Princeton.	
  %	\bibitem[GM]{GM} M. Giaquinta, G. Modica (1982): Nonlinear systems of the type of the stationary Navier-Stokes system. J. Reine Angew. Math. 330, 173-214.
  % \bibitem[Ho]{Ho} M. C. Hong (1992): Some remarks on the minimizers of variational integrals with non standard growth conditions. Boll. U.M.I. (7) 6-A, 91-101.
   \bibitem{KMS} P. Kaplick\'{y}, J. M{\'a}lek, J. Star{\'a} (1999): $C^{1,\alpha}$-solutions to a class of nonlinear fluids in two dimensions - stationary Dirichlet problem. Zapiski Nauchnyh Seminarov POMI 259, 122--144.
\bibitem{KinLew02}
J.~Kinnunen, J.~L. Lewis (2002): Very weak solutions of parabolic systems of
  {$p$}-{L}aplacian type. Ark. Mat. 40, no.~1, 105--132.
   \bibitem{La} O. A. Ladyzhenskaya (1969): The mathematical theory of viscous incompressible flow. Gorden and Breach.
   \bibitem{La2} O. A. Ladyzhenskaya (1967): On some new equations describing dynamics of incompressible fluids
and on global solvability of boundary value problems to these equations. Trudy Steklov's
Math. Institute 102, 85--104.
   \bibitem{La3} O. A. Ladyzhenskaya (1968): On some modifications of the Navier-Stokes equations for large gradients
of velocity, Zap. Nauchn. Sem. Leningrad. Otdel. Mat. Inst. Steklov (LOMI) 7, 126--154.
\bibitem{Li}J. L. Lions (1969): Quelques m\'{e}thodes de r\'{e}solution des probl\`{e}mes aux limites non lin\'{e}aires.
Dunod, Gauthier-Villars, Paris.
\bibitem{Mi} G. Mingione (2006): Regularity of minima: an invitation to the Dark Side of the Calculus of Variations.
Appl. Math. 51, No. 4, 355--426.
%\bibitem[KrMi]{KrMi} J. Kristensen, G. Mingione (2010): Boundary regularity in variational problems. Arch. Rat. Mech. Anal. 198, 369-455.
  \bibitem{MNRR} J. M{\'a}lek, J. Nec\v{a}s, M. Rokyta, M. R\r{u}\v{z}i\v{c}ka (1996):                     Weak and measure valued solutions to evolutionary PDEs. Chapman \& Hall, London-Weinheim-New York. 
   \bibitem{NW} J. Naumann, J. Wolf (2005): Interior differentiability of weak solutions to the equations of stationary motion of a class of Non-Newtonian fluids. J. Math. Fluid Mech. 7, 298--313.                  
  % \bibitem[RR]{RR} M. M. Rao, Z. D. Ren (1991): Theory of Orlicz spaces. Marcel Dekker, New York-Basel-Hongkong. 
  % \bibitem[Wo]{Wo} J. Wolf (2007): Interior $C^{1,\alpha}$-regularity of weak solutions to the equations of stationary motion to certain non-Newtonian fluids in two dimensions. Boll U.M.I. (8) 10B, 317-340.            
%\bibitem[Mi]{mi} G. Mingione (2009): Towards a nonlinear Calderon-Zygmund theory. Quaderni di Matematica 23, 371-458.
%\bibitem[Mi]{Mi} R. von Mises (1913): Mechanik der festen K{\"o}rper in plastisch-deformablem Zustand. G{\"o}ttinger Nachr., math.-phys. Klasse, Berlin, 582-592.
%\bibitem[Re]{Re} Y.G.Reshetnyak (1970): Estimates for certain differential operators with finite-dimensional kernel. Siberian Math.\ J.\ 11(2), 315--326.
\bibitem{Sh} Z. Shen (1995): Resolvent Estimates in $L^p$ for Eliptic Systems in Lipschitz domains. J. Funct. Anal. 133, 224--251.
\bibitem{ts} G. Strang, R. Temam (1981): Functions of bounded deformation. Arch. Rat. Mech. Anal. 75, 7--21.

%\bibitem[Su1]{su1} P. Suquet (19789: Existence et r{\'e}gularit{\'e} des solutions des equations de la plasticit{\'e} parfaite. Th`{e}se de 3e Cycle, Universit{\'e} de Paris-IV. Also C. R. Acad. Sci. Paris, Ser. D 286, 1201-1204.

  % \bibitem[Su2]{su2} Suquet.

%\bibitem[Te]{te} R. Teman (1985): Probl{\`e}mes math{\'e}mathiques en plasticit{\'e}. Gaulliers-Villars, Paris.
\bibitem{Pa} A. Pazy (1983): Semigroups of Linear Operators and Applications to Partial Differen-
tial Equation. Applied Mathematical Sciences, Vol. 44, Springer-Verlag, New
York, Berlin, Heidelberg, Tokyo.
%\bibitem[Sc]{Sc} T. Schmidt, Regularity of minimizers of $W^{1,p}$-quasiconvex variational integrals with $(p, q)$-growth. Calc. Var. Partial Differ. Equ. 32 (2008), 1-24.
\bibitem{Se} G. Seregin (1999): Interior regularity for solutions to the modified Navier-Stokes equations. J.Math. Fluid Mech. 1, 235--281.
\bibitem{So} V. A. Solonnikov (1977): Estimates for Solutions of Nonstationary Navier-Stokes Equations.
J. Soviet Math. 8, 467--528.
\bibitem{Ste93}
E.~M. Stein (1993): Harmonic analysis: real-variable methods, orthogonality, and
  oscillatory integrals. Princeton University Press, Princeton, NJ, With
  the assistance of Timothy S. Murphy, Monographs in Harmonic Analysis, III.

%\bibitem[Wo]{Wo} J. Wolf (2007): Existence of weak solutions to the equations of nonstationary motion of non-Newtonian fluids with shear-dependent viscosity. J. Math. Fluid Mech. 9, 104-138.


\end{thebibliography}

%%%%%%%%%%%%%%%%%%%%%%%%%%%%%%%%%%%%%%%%%%% Es folgen die End-Statements des Dokuments:
%%%%%%%%%%%%%%%%%%%%%%%%%%%%%%%%%%%%%%%%%%% Es folgen die End-Statements des Dokuments:
\end{document}